\documentclass[11pt]{amsart}
\usepackage{amsmath, graphicx, amsfonts, amssymb}
\usepackage{amsfonts, mathrsfs, color, amsthm, amsbsy}
\usepackage{mathtools}
\usepackage{dsfont} 
\usepackage{adjustbox}
\usepackage{enumerate}
\usepackage[normalem]{ulem}
\usepackage{amsrefs}
\usepackage{comment}
\usepackage{MnSymbol}
\DeclareMathAlphabet{\mathpzc}{OT1}{pzc}{m}{it}
\usepackage[utf8]{inputenc}

\usepackage{geometry} 
\geometry{left = 3cm, right = 3cm, top = 3cm, bottom = 3cm}
\usepackage{parskip}
\setlength{\parindent}{0em} 
\newtheorem{thmx}{Theorem}
\usepackage{hyperref}
\renewcommand\H{\mathcal{H}}
\newcommand{\weakly}{\rightharpoonup}
\newcommand\BC{\mathbf{C}}
\newcommand\del{\partial}
\newcommand\dist{\textnormal{dist}}
\renewcommand{\index}{\textnormal{index}}
\newcommand\inj{\textnormal{inj}}

\usepackage[symbol]{footmisc}


\usepackage{tikz}
\usetikzlibrary{positioning,patterns,calc}

\newcommand{\duline}[1]{{\bgroup \markoverwith{{\bgroup \markoverwith{\rule[-1.2pt]{0.1pt}{0.4pt}}\ULon {\rule[-2.8pt]{1pt}{0.4pt}}}}\ULon {#1}}} 
\newcommand{\fduline}[1]{{\bgroup \markoverwith{{\bgroup \markoverwith{\rule[-1.2pt]{0.4pt}{0.4pt}}\ULon {\rule[-2.8pt]{2pt}{0.4pt}}}}\ULon {#1}}} 

\newcommand{\real}{\mathbb{R}} 
\newcommand\R{\mathbb{R}} 
\newcommand{\integer}{\mathbb{Z}} 
\newcommand{\haus}{\mathcal{H}} 
\newcommand{\eball}{B}

\newcommand{\csvf}{C^1_c}

\newcommand{\divergence}{\textup{div}} 
\newcommand{\graph}{\textup{graph}} 
\newcommand{\ivarifolds}{\mathcal{IV}} 

\newcommand{\setv}{\mathbf{v}}



\newcommand{\supp}{\textnormal{spt}}
\newcommand{\spt}{\textnormal{spt}}

\newcommand{\Index}{\textup{index}} 

\newcommand{\restrictv}{\mathbin{\hspace{0.1em}\vrule height 1.3ex depth 0pt width 0.13ex\vrule height 0.13ex depth 0pt width 1.0ex}}

\newcommand{\tangentcones}{\textup{VarTan}} 
\newcommand{\folding}{\mathfrak{f}} 
\newcommand{\reg}{\textup{reg}} 
\newcommand{\sing}{\textup{sing}} 
\newcommand{\salpha}{\mathscr{S}} 
\newcommand{\regscale}{\pmb{r}} 
\newcommand{\badreg}{\mathcal{B}} 
\newcommand{\strata}{\mathcal{S}} 
\newcommand{\kcone}{\mathscr{C}} 


%


\newtheorem{theorem}{Theorem}[section]
\newtheorem{lemma}[theorem]{Lemma}

\newtheorem{corollary}[theorem]{Corollary}

\newtheorem{definition}[theorem]{Definition}
\newtheorem*{theorem*}{Theorem}

\makeatletter
\@addtoreset{claim}{theorem}
\makeatother

\usepackage{etoolbox}
\makeatletter
\patchcmd{\@maketitle}
  {\ifx\@empty\@dedicatory}
  {\ifx\@empty\@date \else {\vskip3ex \centering\footnotesize\@date\par\vskip1ex}\fi
   \ifx\@empty\@dedicatory}
  {}{}
\patchcmd{\@adminfootnotes}
  {\ifx\@empty\@date\else \@footnotetext{\@setdate}\fi}
  {}{}{}
\makeatother

\title[Quantitative Estimates on the Singular Set of Minimal Hypersurfaces]{Quantitative Estimates on the Singular Set of Minimal Hypersurfaces with Bounded Index}
\author{Nicolau S. Aiex, Sean McCurdy, and Paul Minter}
\date{\today}
\address{Department of Mathematics, National Taiwan Normal University, Taipei, Taiwan}
\email{nsarquis@math.ntnu.edu.tw}
\address{Department of Mathematics, National Taiwan Normal University, Taipei, Taiwan}
\email{smccurdy@ntnu.edu.tw}
\address{Institute for Advanced Study (Fuld Hall) and Princeton University (Fine Hall), Princeton, New Jersey, USA, 08544, USA}
\email{pm6978@princeton.edu\textnormal{,} pminter@ias.edu}

\begin{document}

\begin{abstract}
We prove local measure bounds on the tubular neighbourhood of the singular set of codimension one stationary integral $n$-varifolds $V$ in Riemannian manifolds which have both: (i) finite index on their smoothly embedded part; and (ii) $\H^{n-1}$-null singular set. A direct consequence of such a bound is a bound on the upper Minkowski content of the singular set of such a varifold in terms of its total mass and its index. Such a result improves on known bounds, namely the corresponding bound on the $\H^{n-7}$-measure of the singular set established by A.~Song (\cite{antoine-song}), as well as the same bounds established by A.~Naber and D.~Valtorta (\cite{naber-valtorta}) for codimension one area minimising currents. Our results also provide more structural information on the singular set for codimension one integral varifolds with finite index (on the regular part) and no classical singularities established by N.~Wickramasekera (\cite{wickstable}). 
\end{abstract}

\maketitle

\section{Introduction}

The aim of the present paper is to prove quantitative estimates on the size of the singular set for a large class of codimension one stationary integral varifolds which have a smoothly embedded part of finite index (see Section \ref{sec:prelim} for precise definitions). In particular, we prove measure bounds on the size of the tubular neighbourhood of the singular set (and thus bound the upper Minkowski content of the singular set) in terms of the total (varifold) mass and the index. More precisely, our main result in the Euclidean setting can be stated as follows:

\begin{thmx}\label{thm:A}
    Let $\Lambda>0$ and $n\geq 8$. Suppose $V$ is a stationary integral $n$-varifold in $B^{n+1}_2(0)$ such that: \textnormal{(i)} $\|V\|(B_2^{n+1}(0))\leq \Lambda$; \textnormal{(ii)} its smoothly embedded part has finite index, i.e., $\textnormal{index}(\reg(V))<\infty$; and \textnormal{(iii)} $\mathcal{H}^{n-1}(\sing(V))=0$. Then, $\sing(V)$ is countably $(n-7)$-rectifiable, and moreover we have, for any $0<r\leq 1/2$,
    $$\mathcal{H}^{n+1}\left(\eball_{r/8}(\sing(V))\cap \eball_{1/2}\right) \leq C_0\left(1+\textnormal{index}(\reg(V))\right)r^8;$$
    $$\|V\|\left(\eball_{r/8}(\sing(V))\cap \eball_{1/2}\right)\leq C_0(1+\textnormal{index}(\reg(V)))r^7.$$
    In particular, the upper Minkowski content of $\sing(V)$ obeys
    $$\mathcal{M}^{* n-7}\left(\sing(V)\cap \eball_{1/2}\right) \leq C_0(1+\textnormal{index}(\reg(V)))$$
    which in turn implies that $\mathcal{H}^{n-7}\left(\sing(V)\cap\eball_{1/2}\right) \leq C_0(1+\textnormal{index}(\reg(V)))$; here, $C_0 = C_0(n,\Lambda)\in (0,\infty)$.
\end{thmx}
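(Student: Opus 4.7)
The plan is to combine the Naber--Valtorta quantitative stratification framework with Wickramasekera's regularity theorem for stable codimension-one integral varifolds, using the finite index hypothesis to reduce to the stable setting at appropriate scales and paying the factor $(1+\index(\reg(V)))$ as the price of passing through the finitely many balls where the regular part of $V$ is not stable.

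First I would establish a \emph{stability reduction} via a covering argument: using subadditivity of index over pairwise disjoint regions, one produces a collection of disjoint balls $\{\eball_{\rho_j}(y_j)\}_{j=1}^J \subset \eball_{1/2}$ with $J \le C(n,\Lambda)(1+\index(\reg(V)))$ such that on the complement $\eball_{1/2}\setminus \bigcup_j \eball_{\rho_j}(y_j)$ the regular part of $V$ is stable on every sub-ball. On such stable sub-balls Wickramasekera's theorem applies directly: hypothesis (iii) together with Wickramasekera's classification of stable tangent cones rules out classical singularities and forces every tangent cone at a singular point to have spine of dimension $\le n-7$. In particular, one obtains the standard qualitative $\epsilon$-regularity there: if $V$ is sufficiently close in varifold distance to a multiplicity-one hyperplane on a stable sub-ball $\eball_s(x)$, then $\sing(V)\cap \eball_{s/2}(x)=\emptyset$.

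Next I would run the Naber--Valtorta quantitative stratification on each stable sub-ball. Define the usual quantitative strata $\salpha^k_{\eta,r}(V)$ of points failing to admit an $(\eta,k{+}1)$-symmetric approximation at any scale in $[r,1]$; monotonicity of density (automatic for stationary varifolds) yields quantitative cone-splitting, and the key technical input is a quantitative $L^2$-best-plane estimate bounding the squared distance of $\supp\|V\|$ from its best-fitting $(n-7)$-plane by a discrete Jones-type $\beta$-functional, integrated against the density drop. Feeding this into the rectifiable Reifenberg theorem and running the Naber--Valtorta top-down covering scheme on each stable sub-ball produces the Minkowski estimate
\[
\haus^{n+1}\!\bigl(\eball_{r/8}(\salpha^{n-7}_{\eta,r}(V)) \cap \eball_{s/2}(x)\bigr) \le C(n,\Lambda,\eta)\,s^{n-6}\,r^{8},
\]
together with $(n-7)$-rectifiability of $\salpha^{n-7}_{\eta,r}(V)$. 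The quantitative $\epsilon$-regularity then identifies $\sing(V)$ with $\salpha^{n-7}_{\eta,r}(V)$ at scale $r$ on each stable region (for $\eta$ sufficiently small depending on $n,\Lambda$), giving the desired volume bound and, because $\|V\|$ has bounded density on the regular part and locally bounded ratio with $\haus^n$ there, the mass bound as well. Summing over the stable region and the at most $J$ index-balls (each treated by rescaling and induction on the strictly smaller index inside) yields the global estimates with the advertised $(1+\index(\reg(V)))$ factor. The Minkowski content bound and the $\haus^{n-7}$ bound follow immediately from the volume estimate.

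The main obstacle is establishing the requisite quantitative $\epsilon$-regularity for stable codimension-one stationary integral varifolds --- a quantitative analog of Wickramasekera's sheeting theorem --- with constants depending only on $n$ and local mass, and without any a priori dimensional bound on $\sing(V)$. This must be proved by a compactness/contradiction argument in which hypothesis (iii) is used crucially to rule out classical singularities in any limit, so that Wickramasekera's theorem applies to the limiting varifold. A secondary technical obstacle is the bookkeeping required to propagate the factor $(1+\index)$ cleanly through the iterated covering/induction, ensuring that the index budget spent at unstable balls is not overcounted at finer scales; the base case $\index(\reg(V))=0$ is then exactly the purely stable situation handled by the stratification argument above.
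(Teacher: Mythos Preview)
Your overall strategy---stable $\varepsilon$-regularity plus Naber--Valtorta---is the right toolkit, but the ``stability reduction + induction on index'' scheme has a genuine gap, and the paper proceeds differently.

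The gap is in the stability reduction. You assert that one can find $J\le C(1+\index)$ disjoint balls such that on the complement the regular part of $V$ is stable \emph{on every sub-ball}. This is not true in general, and the folding-number bound does not deliver it: what $\folding(V)\le\index$ gives you is that there are at most $\index$ \emph{pairwise disjoint} unstable balls, but a sub-ball of your complement can still be unstable provided it intersects one of the removed balls. So the complement is not a region on which the stable $\varepsilon$-regularity can be invoked at all scales. Relatedly, your inductive step (``strictly smaller index inside'') is not justified: an unstable ball can carry the full index, and restricting to a sub-ball need not lower it. Even if one patched these issues, the complement of $J$ balls is not itself a ball, so to run Naber--Valtorta you would still have to cover it by balls on which $V$ is stable---at which point you are forced into a scale-dependent decomposition anyway.

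The paper's mechanism is to organise everything through the \emph{stability radius} $s_V(x):=\sup\{\rho:\ \reg(V)\text{ is stable in }\eball_\rho(x)\}$, which is finite-valued and $1$-Lipschitz whenever $V$ is unstable. One splits $\sing(V)\cap\eball_1$ into $\{s_V<r\}$, $\{s_V\in[r,1]\}$, and $\{s_V>1\}$. The first piece is handled by a direct Besicovitch argument: a maximal disjoint collection of balls $\overline\eball_{s_V(a)}(a)$ has at most $\folding(V)$ members, so the $r$-tube has volume $\le C\folding(V)r^{n+1}$. The third piece is uniformly stable at scale $1$, so the $\varepsilon$-regularity places it in $\strata^{n-7}_{\varepsilon_0,r,1/2}$ and Naber--Valtorta gives $Cr^8$. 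The heart of the matter is the middle piece: decompose it dyadically as $A_j=\{s_V\in[2^{-j},2^{-j+1}]\}$ for $j=1,\dotsc,k_0\approx\log_2(1/r)$. A covering lemma (again via Besicovitch and the folding bound) covers each $A_j$ by at most $C(n)\folding(V)$ balls of radius $\sim 2^{-j}$ on which $V$ is stable; on each such ball the $\varepsilon$-regularity yields $A_j\subset\strata^{n-7}_{\varepsilon_0,r/8,2^{-j-1}}$, and the \emph{rescaled} Naber--Valtorta estimate gives a contribution $\le C\folding(V)\,r^8\,(2^{-j})^{n-7}$. Summing the geometric series in $j$ converges precisely because $n\ge 8$, yielding the $C(1+\folding(V))r^8$ bound. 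The $\|V\|$-estimate then follows from the $\H^{n+1}$-estimate by a packing argument plus monotonicity.

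So the missing idea is not the $\varepsilon$-regularity (which you correctly identify and which the paper does prove by compactness/contradiction against Wickramasekera's sheeting theorem), but the scale-dependent decomposition by $s_V$ and the dyadic summation that turns ``at most $\folding(V)$ stable balls per scale'' into a convergent series. There is no induction on index anywhere.
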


When our varifolds instead are defined on an ambient smooth Riemannian manifold, our main result is:

\begin{thmx}\label{thm:B1}
    Let $\Lambda>0$, $n\geq 8$, $K>0$. Let $(N^{n+1},g)$ be a smooth Riemannian manifold with $0\in N$ and obeying $\left|\left.\textnormal{sec}\right|_{B^N_2(0)}\right|\leq K$ and $\left.\textnormal{inj}\right|_{B^N_2(0)}\geq K^{-1}$, where here $\textnormal{sec}$ is the sectional curvature of $N$ and $\textnormal{inj}$ is the injectivity radius. Suppose that $V$ is a stationary integral $n$-varifold in $B^N_2(0)$ which obeys: \textnormal{(i)} $\|V\|(B^{N}_2(0))\leq \Lambda$; \textnormal{(ii)} the smoothly embedded part of $V$ has finite index; and \textnormal{(iii)} $\H^{n-1}(\sing(V)) = 0$. Then, $\sing(V)$ is countably $(n-7)$-rectifiable, and moreover we have, for any $0<r\leq 1/2$,
    $$\H^{n+1}(B^N_{r/8}(\sing(V))\cap B^N_{1/2}(0))\leq C_0\left(1+\index(\reg(V))\right)r^8;$$
    $$\|V\|(B^N_{r/8}(\sing(V))\cap B^N_{1/2}(0))\leq C_0\left(1+\index(\reg(V))\right)r^7.$$
    In particular, we have $\mathcal{M}^{*n-7}(\sing(V)) \leq C_0\left(1+\index(\reg(V))\right)$; here, $C_0 = C_0(n,\Lambda,K)\in (0,\infty)$ and $B^N_r(A)$ denotes the $r$-neighbourhood in $N$ of the a subset $A\subset N$.
\end{thmx}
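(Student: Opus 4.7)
The plan is to deduce Theorem B1 from Theorem A by a localization argument via normal coordinates, exploiting scale-invariance to absorb the metric error. At each point $p \in \bar B^N_{1/2}(0)$, fix normal coordinates $\exp_p : B^{n+1}_{r_0}(0) \subset T_pN \to N$, where $r_0 = r_0(K) \in (0, K^{-1}/4)$ is chosen small enough (using $|\sec| \leq K$ and $\inj \geq K^{-1}$) that the pulled-back metric $\tilde g := \exp_p^* g$ satisfies $\|\tilde g - g_{\mathrm{Eucl}}\|_{C^2(B_{r_0})} \leq \eta$ for any prescribed small $\eta > 0$. By a standard Vitali/doubling argument, one can cover $B^N_{1/2}(0)$ by a uniformly bounded number $N_0 = N_0(n, K)$ of such balls $B^N_{r_0/2}(p_i)$, so it suffices to prove the desired bounds inside each $B^N_{r_0/2}(p_i)$.

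Inside such a chart, the varifold $V$ pulls back to a varifold $\tilde V$ on $B^{n+1}_{r_0}(0)$ which is stationary with respect to $\tilde g$, whose smoothly embedded part has (essentially) the same index bound with respect to $\tilde g$, and for which $\H^{n-1}(\sing(\tilde V)) = 0$. Rescaling $\tilde V$ from $B_{r_0}$ to $B_2$ via $y \mapsto 2y/r_0$, one obtains a rescaled varifold $\hat V$ on $B_2$, stationary with respect to a rescaled metric $\hat g$ that is now $C^2$-close to the Euclidean metric on $B_2$ with norm controlled by $r_0 \cdot \eta$, since under this rescaling curvature scales favourably. The total mass of $\hat V$ is bounded by $2^n r_0^{-n} \Lambda$, and its index is still controlled by $\index(\reg(V))$, since the index is scale-invariant and conformally controlled. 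Thus, up to a mild dilation, Theorem B1 reduces to Theorem A applied to a varifold that is stationary not with respect to the Euclidean metric, but with respect to a metric $\hat g$ that is arbitrarily $C^2$-close to Euclidean.

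The core of the proof therefore lies in verifying that the proof of Theorem A goes through for stationary integral varifolds with respect to a $C^2$-small perturbation of the Euclidean metric, with constants depending uniformly on the perturbation size. Each ingredient of the Euclidean proof admits such a metric-perturbed analogue: the monotonicity formula acquires a harmless exponential correction (since $|\sec| \leq K$ bounds can be incorporated using the standard trick due to Simon); varifold compactness and the lower semicontinuity of index extend verbatim; the Wickramasekera regularity theorem holds in Riemannian ambients; and the quantitative stratification of Naber--Valtorta is known to hold in manifolds with bounded curvature. Provided each of these is available with constants depending only on $n, \Lambda, K$, one obtains the analogue of Theorem A inside each chart and sums over the finite cover to conclude Theorem B1, with $C_0 = C_0(n, \Lambda, K) = N_0(n, K) \cdot r_0^{-(n+1)} \cdot C_0^{\mathrm{Eucl}}(n, \Lambda')$ for a suitable renormalised mass bound $\Lambda'$.

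The principal obstacle I expect is the careful bookkeeping required to verify that the finite-index Euclidean proof is genuinely stable under $C^2$-small metric perturbations, and in particular that the key compactness-and-contradiction arguments (used for instance to propagate regularity from tangent cones to small scales) remain uniform. One should either rerun the proof of Theorem A in the perturbed setting while tracking the dependence on $\|\hat g - g_{\mathrm{Eucl}}\|_{C^2}$, or equivalently argue by contradiction: if the conclusion of Theorem B1 fails, then by a standard blow-up at a bad point one obtains a sequence of stationary integral varifolds with respect to metrics converging in $C^2$ to Euclidean, violating Theorem A in the limit. The latter route is cleaner but presumes a smooth varifold-and-metric compactness theorem for stationary integral varifolds in varying ambient metrics with uniform index bounds; such a statement should already be available from the Euclidean theory developed earlier in the paper.
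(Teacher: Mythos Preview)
Your approach is essentially the same as the paper's: both recognise that Theorem~B1 follows by rerunning the proof of Theorem~A with each Euclidean ingredient replaced by its Riemannian (or perturbed-Euclidean) analogue --- monotonicity with a curvature correction, Wickramasekera's regularity and compactness in Riemannian ambients, and the Naber--Valtorta quantitative stratification with bounded geometry. The only presentational difference is that you localise first via a finite cover by small normal coordinate charts and rescale to a nearly-Euclidean $B_2$, whereas the paper works intrinsically in $B^N_2(0)$ throughout, restating each lemma (stability radius, folding number, regularity scale, $\varepsilon$-regularity, the covering lemmas) directly in Riemannian terms; in particular the paper singles out that the Besicovitch covering theorem holds here because the manifold is directionally limited, a point you avoid by working in charts where the Euclidean Besicovitch theorem applies. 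Your alternative route~(b), a contradiction/blow-up argument, is not what the paper does and would require more care to make quantitative, but your primary route~(a) is correct and matches the paper.
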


We remark that from Theorem \ref{thm:B1}, a simple covering argument gives a global result in closed Riemannian manifolds, as there is a uniform bound on the sectional curvature and a uniform lower bound on the injectivity radius (depending on $(N^{n+1},g)$):

\begin{thmx}\label{thm:B}
Let $\Lambda> 0$, $n\geq 8$, and let $(N^{n+1},g)$ be a closed smooth Riemannaian manifold. Suppose $V$ is a stationary integral $n$-varifold in $N$ obeying: \textnormal{(i)} $\|V\|(N)\leq \Lambda$; \textnormal{(ii)} the smoothly embedded part of $V$ has finite index; and \textnormal{(iii)} $\mathcal{H}^{n-1}(\sing(V)) = 0$. Then, $\sing(V)$ is countably $(n-7)$-rectifiable, and moreover we have, for any $0<r\leq 1/2$,
$$\mathcal{H}^{n+1}\left(B^N_{r/8}(\sing(V))\right)\leq C_0(1+\textnormal{index}(\reg(V)))r^8;$$
$$\|V\|\left(B^N_{r/8}(\sing(V))\right) \leq C_0(1+\textnormal{index}(\reg(V)))r^7.$$
In particular, we have $\mathcal{M}^{*n-7}(\sing(V))\leq C_0(1+\textnormal{index}(\reg(V)))$; here, $C_0 = C_0(\Lambda,N,g)$.
\end{thmx}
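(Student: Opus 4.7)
The plan is to deduce Theorem \ref{thm:B} directly from Theorem \ref{thm:B1} via a finite covering argument, exploiting the compactness of the ambient $(N^{n+1},g)$. First I would extract from compactness uniform geometric bounds: since $N$ is closed, there exists $K = K(N,g) \in (0,\infty)$ such that $|\textnormal{sec}| \leq K$ on all of $N$ and $\textnormal{inj}(N) \geq K^{-1}$. In particular, each geodesic ball $B^N_2(x)$ with $x \in N$ automatically satisfies the curvature and injectivity hypotheses of Theorem \ref{thm:B1} at scale $2$.

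Next, I would choose a finite collection of points $x_1, \dots, x_M \in N$, with $M = M(N,g)$, such that $\{B^N_{1/2}(x_i)\}_{i=1}^M$ covers $N$; this is possible (with $M$ depending only on $(N,g)$) since $N$ is compact. For each $i$, the restriction of $V$ to $B^N_2(x_i)$ is a stationary integral $n$-varifold with mass $\|V\|(B^N_2(x_i)) \leq \|V\|(N) \leq \Lambda$, with $\H^{n-1}$-null singular set, and with smoothly embedded part of finite index bounded above by $\textnormal{index}(\reg(V))$ (since restricting the class of allowable compactly supported variations can only decrease index). Thus Theorem \ref{thm:B1} applies at each $x_i$ and yields, for $0 < r \leq 1/2$,
\begin{align*}
\H^{n+1}\bigl(B^N_{r/8}(\sing(V)) \cap B^N_{1/2}(x_i)\bigr) &\leq C_1(1 + \textnormal{index}(\reg(V)))r^8,\\
\|V\|\bigl(B^N_{r/8}(\sing(V)) \cap B^N_{1/2}(x_i)\bigr) &\leq C_1(1 + \textnormal{index}(\reg(V)))r^7,
\end{align*}
with $C_1 = C_1(n,\Lambda,K)$, together with countable $(n-7)$-rectifiability of $\sing(V) \cap B^N_{1/2}(x_i)$.

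Summing over $i = 1,\dots,M$ and using the covering $N \subset \bigcup_i B^N_{1/2}(x_i)$, I would obtain the two global measure estimates in Theorem \ref{thm:B} with constant $C_0 := M \cdot C_1$, which depends only on $(\Lambda, N, g)$. Countable $(n-7)$-rectifiability of $\sing(V)$ follows because a finite union of countably $(n-7)$-rectifiable sets is countably $(n-7)$-rectifiable. Finally, the upper Minkowski content bound $\mathcal{M}^{\ast n-7}(\sing(V)) \leq C_0(1 + \textnormal{index}(\reg(V)))$ is an immediate consequence of the $\H^{n+1}$-tubular-neighbourhood bound by the definition of upper Minkowski content (dividing by the Euclidean volume of a ball of radius $r/8$ and passing to $\limsup_{r \to 0^+}$, absorbing dimensional constants into $C_0$).

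There is no genuine obstacle in this deduction; the entire substance lies in Theorem \ref{thm:B1}. The only minor point to verify carefully is the monotonicity of the index under restriction to subdomains, but this is immediate from the definition via the quadratic form associated to the Jacobi operator on compactly supported normal vector fields in the smooth part, since a compactly supported variation in $\reg(V) \cap B^N_2(x_i)$ extends by zero to a compactly supported variation in $\reg(V)$.
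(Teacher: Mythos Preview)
Your proposal is correct and follows essentially the same approach as the paper: the paper explicitly notes that closed Riemannian manifolds admit a uniform $K = K(N,g)$ with $|\textnormal{sec}|\leq K$ and $\inj\geq K^{-1}$, and that Theorem~\ref{thm:B} then follows from Theorem~\ref{thm:B1} ``by a simple covering and compactness argument.'' Your write-up simply spells out this covering argument in detail.
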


In the special case where $V$ is the varifold associated to a codimension one area minimising current $T$ (which in particular obey $\textnormal{index}(\reg(V)) = 0$), we note that the corresponding results have already been established in \cite{naber-valtorta}*{Theorem 1.6}.

Under the same assumptions, A.~Song (\cite{antoine-song}) established that the $\mathcal{H}^{n-7}$-measure of the singular set obeys, for each $n\geq 7$,
$$\mathcal{H}^{n-7}(\sing(V)\cap \eball_{1/2}) \leq C^{\text{AS}}_0(1+\textnormal{index}(\reg(V)))^{7/n}$$
where $C^{\text{AS}}_0 = C^{\text{AS}}_0(\Lambda, N,g)$. In particular, in the special case when $n=7$ (and thus the singular set is $0$-dimensional, and in fact by \cite{wickstable} consists only of isolated points), this already implies measure bounds on the tubular neighbourhood of $\sing(V)$; indeed, it is for this reason that we are only interested in the case $n\geq 8$ (although our argument only works for $n\geq 8$, much in the same way as how the corresponding argument in \cite{antoine-song} for $n\geq 8$ also cannot be used to prove the $n=7$ case). However, when $n\geq 8$, the $\H^{n-7}$-measure bound is significantly weaker than control on the measure of the entire tubular neighbourhood. We also remark that, as the constants $C_0,C_0^{\text{AS}}$ depend on the total varifold mass, it is unclear which bound on the $\mathcal{H}^{n-7}$-measure of the singular set is optimal, as there are conjectured relationships between the area of a stationary integral varifold and the index of its embedded part. It would be an interesting question to determine the dependence of the constant $C_0$ on the mass $\|V\|(B^{n+1}_2(0))$. 

As a further remark, since $\dim_{\mathcal{H}}(\sing(V))\leq n-7$, we know that $\reg(V)$ is a connected smooth submanifold, and as such by the constancy theorem occurs with some constant (integer) multiplicity.  Thus, without loss of generality we can assume the multiplicity is 1 when proving the first estimate of our theorems as this does not change the value of the index. Under this assumption, $\|V\|(B^{n+1}_2(0)) = \mathcal{H}^{n}(\reg(V))$, and if we set $M:= \reg(V)$, we can work directly with $M$, viewing $\sing(V)\equiv \sing(M):= \overline{M}\backslash M$. Furthermore, this tells us that the constant $C_0$ in our main theorems, for the first inequality at least, only depends on $\H^n(\reg(V)\cap B^{n+1}_2(0))$, and not on $\Lambda$.

It should be noted that, whilst the results of \cite{antoine-song} are stated under the assumption that the singular set a priori obeys the stronger assumption $\dim_{\mathcal{H}}(\sing(V))\leq n-7$ (i.e., for each $\gamma>0$, $\mathcal{H}^{n-7+\gamma}(\sing(V)) = 0$), under the assumption that the smoothly embedded part of $V$ has finite index, the assumption that $\mathcal{H}^{n-1}(\sing(V)) = 0$ \textit{actually implies} $\dim_{\mathcal{H}}(\sing(V))\leq n-7$. This is due to the fact that when the index is finite, the smoothly embedded part of the varifold is locally stable, and thus one may apply the regularity theory for stable codimension one stationary integral varifolds established by N.~Wickramasekera (\cite{wickstable}). Consequently, $\sing(V) = \mathcal{S}_{n-7}$ is equal to the $(n-7)^{\text{th}}$ strata, and thus is countably $(n-7)$-rectifiable by \cite{naber-valtorta}. Moreover, it should be noted that \cite{wickstable} also gives that one may replace in our results the assumption that $\mathcal{H}^{n-1}(\sing(V)) = 0$ with the equivalent condition that $V$ does not contain any so-called \textit{classical singularities}, i.e. singularities locally about which $V$ can be written as a sum of at least 3 $C^{1,\alpha}$ submanifolds-with-boundary, for some $\alpha\in (0,1)$, which all have the same common boundary.


\textbf{
Acknowledgements:} The first author would like to thank Professor Ulrich Menne for suggesting the problem and putting this collaboration together.
NSA 
was supported through the grant with No. MOST 108-2115-M-003-016-MY3 by 
the National Science and Technology Council.


\section{Preliminaries}\label{sec:prelim}

Throughout this article $n$ will always denote a positive integer unless its range is specified.
Given $r>0$, we denote by $\eball^{n+1}_r(x)$ the Euclidean ball of radius $r$ centered at $x\in\real^{n+1}$, and as a shorthand we write $\eball^{n+1}_r(0)\equiv \eball^{n+1}_r$.
If $A\subset\real^{n+1}$ is any set and $x\in\real^{n+1}$ then we write $d(x,A):=\inf\{d(x,a):a\in A\}$, $\eball_r(A):=\{x\in\real^{n+1}:d(x,A)<r\}$ and given $B\subset\real^{n+1}$ another subset, we define their distance as $d(A,B):=\inf\{d(a,b):a\in A,b\in B\}$.

Given an open set $U\subset\real^{n+1}$ and a positive integer $k$ we will denote by $\ivarifolds_k(U)$ the space of integral $k$-varifolds in $\real^{n+1}$ with support in $U$.
We endow $\ivarifolds_k(U)$ with the weak topology of measure-theoretic convergence, which is induced by the corresponding Fr\' echet structure, for which we write $\mathbf{d}$ for the corresponding metric which induces the topology.
Given $V\in\ivarifolds_k(U)$ and $x\in\supp(V)$ we denote by $\Theta_V(x)$ its density at $x$, which is a positive integer $\|V\|$-almost everywhere.
If $R\subset U$ is a $\haus^k$-rectifiable set and $\theta:U\rightarrow\integer_{\geq 0}$ is a $\haus^k$-measurable function which is locally integrable on $U$, we denote the induced integral $k$-varifold of density $\theta$ by $\setv(R,\theta)$; in the special case when $R$ is the graph of a $C^2$ function $u$ and the multiplicity $\theta$ is identically 1, we write the shorthand $\setv(R,\theta)\equiv \setv(u)$.
We will always assume that $1\leq k\leq n$ when providing general definitions since the extreme cases $k=0$ and $k=n+1$ are often trivial and irrelevant to usual applications.

For $x_0\in \R^{n+1}$ and $r>0$, we define the homothetic rescaling by $r$ about $x_0$ to be the function $\eta_{x_0,r}:\R^{n+1}\to \R^{n+1}$ given by $\eta_{x_0,r}(x):= r^{-1}(x-x_0)$. Given $V\in \ivarifolds_k(U)$, $x\in\spt\|V\|$, and $r>0$, we write $\tangentcones_x(V)\subset\ivarifolds_k(\R^{n+1})$ for the set of tangent cones to $V$ at $x$, i.e. accumulation points of (convergent subsequences of) $(\eta_{x,r})_\#V$ as $r\downarrow 0$.

Given $V\in\ivarifolds_k(U)$ we define its \textit{regular part} as
\begin{equation*}
\begin{aligned}
\reg(V) := \{ & x\in\supp\|V\|: \supp\|V\|\cap\eball_\rho(x) \text{ is a properly smoothly embedded }\\
             &\hspace{17em} k\text{-submanifold of }U \text{ for some }\rho>0 \}
\end{aligned}
\end{equation*}
and its \textit{singular part} to be $\sing(V):=\supp\|V\|\backslash\reg(V)$.
We note that, according to the definition above, even if a varifold $V$ is given by a smooth immersed submanifold, its points of self-intersection are in $\sing(V)$.

Let $\csvf(U,\real^{n+1})$ be the space of compactly supported vector fields of class $C^1$ in $U$.
If $V\in\ivarifolds_k(U)$, $X\in\csvf(U,\real^{n+1})$, and $\phi^X_t$ is the $C^2$ flow generated by $X$ defined for $t\in(-\varepsilon,\varepsilon)$, we denote the first and second variations of $V$ in the direction of $X$ by $\delta V(X)=\left.\frac{d}{dt}\right|_{t=0}\|(\phi^X_t)_\# V\|$ and $\delta^2 V(X,X)=\left.\frac{d^2}{dt^2}\right|_{t=0}\|(\phi^X_t)_\# V\|$ respectively.
We say that $V$ is \emph{stationary} in $U$ if $\delta V(X)=0$ for all $X\in\csvf(U,\real^{n+1})$ and of \emph{bounded first variation} if there exists $H>0$ such that $\delta V(X)\leq H\int_U|X|d\|V\|$ for all $X\in\csvf(U,\real^{n+1})$. A direct computation shows (\cite{simon-gmt}):
$$\delta V(X) = \int_{U\times G(k,n+1)}\textnormal{div}_S(X_x)\ dV(x,S)$$
and
\begin{align*}
    \delta^2V(X,X) = \int_{U\times G(k,n+1)}&\left\{\divergence_S(Y_x) + \left(\divergence_S(X_x)\right)^2 + \sum^k_{i=1}\left|(\nabla_{\tau_i}X_x)^{\perp_S}\right|^2\right.\\
    & \hspace{3em} \left. - \sum^k_{i,j=1}(\tau_j\cdot \nabla_{\tau_i}X_x)(\tau_i\cdot \nabla_{\tau_j}X_x)\right\}\ d V(x,S)
\end{align*}

where $Y_x:= \left.\frac{d^2}{dt^2}\right|_{t=0}\phi^X_t(x)$, $\nabla$ is the Euclidean connection, and $\{\tau_1,\dotsc,\tau_k\}$ is a choice of orthonormal basis for the subspace $S\in G(k,n+1)$, with $\perp_S$ denoting the corresponding orthogonal projection onto the orthogonal complement of $S$.

\begin{definition}\label{definition bounded index}
Let $\Omega\subset U\subset\real^{n+1}$ be open sets,  $V\in\ivarifolds_k(U)$ a stationary integral varifold and $I\in\integer_{\geq 0}$ a non-negative integer.
We say that the regular part of $V$ has index bounded by $I$ in $\Omega$ if and only if for all subspaces $P\subset\csvf(\Omega\backslash\sing(V),\R^{n+1})$ of dimension $I+1$ there exists $X\in P$ such that $X\neq 0$ and $\delta^2 V (X,X)\geq 0$.

When the regular part of $V$ has bounded index in $\Omega$, we may define its corresponding index as
\begin{equation*}
\Index(\reg(V);\Omega):=\min\{I\in\integer_{\geq 0}: \reg(V)\text{ has index bounded by } I\text{ in }\Omega\}
\end{equation*}
and we write $\textnormal{index}(\reg(V)):= \textnormal{index}(\reg(V);U)$.
\end{definition}

The regular part of a varifold $V$ is said to be \emph{stable} in an open subset $\Omega\subset U$ if $\Index(\reg(V);\Omega)=0$.
Otherwise we say that the regular part of $V$ is \textit{unstable} in $\Omega$.
The following related notion of \emph{folding number} was introduced in \cite{antoine-song}: 

\begin{definition}\label{defn:folding}
Let $U\subset\real^{n+1}$ be an open subset and $V\in\ivarifolds_k(U)$ be a stationary integral varifold. We define the folding number $\folding(V)$ of $V$ as follows: if $V$ is stable in $U$, then $\folding(V):= 0$; otherwise, we define $\folding(V)$ to be the largest size of a (possibly infinite) collection of disjoint unstable open subsets in $U$.
\end{definition}

The folding number simply quantifies the number of disjoint unstable subsets in $V$. However, each such subset may have index strictly larger than $1$ without the possibility of breaking into further disjoint unstable subsets. The core property of varifolds with bounded index is the fact that they cannot be unstable in too many disjoint open subsets. This fact is quantified by the folding number in the following lemma.

\begin{lemma}\label{bounded index disjoint sets}
Let $U\subset\real^{n+1}$ be an open set, $I\in\integer_{\geq 0}$ be a non-negative integer, and $V\in\ivarifolds_l(U)$ be a stationary integral varifold with $\Index(\reg(V))\leq I$. If $\Omega_1,\ldots,\Omega_{I+1}\subset U$ is a collection of $I+1$ open sets such that $\Index(\reg(V);\Omega_j)\geq 1$ for each $j=1,\ldots,I+1$ then there exist $j\neq j'$ such that $\Omega_{j}\cap\Omega_{j'}\neq\emptyset$. In particular $\folding(V)\leq I$.
\end{lemma}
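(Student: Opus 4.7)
The plan is to argue the main statement by contradiction: assume $\Omega_1,\dots,\Omega_{I+1}$ are pairwise disjoint, and construct an $(I+1)$-dimensional subspace of compactly supported vector fields on which the second variation is negative definite, contradicting the index bound. The conclusion about $\folding(V)$ is then immediate from the definition.

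More precisely, since $\Index(\reg(V);\Omega_j)\geq 1$ for each $j$, the contrapositive of Definition \ref{definition bounded index} (applied with $I=0$ on the domain $\Omega_j$) produces a nonzero $X_j\in \csvf(\Omega_j\backslash \sing(V),\R^{n+1})$ with $\delta^2 V(X_j,X_j)<0$. The disjointness of $\Omega_1,\dots,\Omega_{I+1}$ gives that the $X_j$ have pairwise disjoint supports, so in particular they are linearly independent and their span $P:= \mathrm{span}\{X_1,\dots,X_{I+1}\}\subset \csvf(U\backslash \sing(V),\R^{n+1})$ has dimension $I+1$. Applying the index hypothesis $\Index(\reg(V))\leq I$ to this subspace $P$ produces some nonzero $X = \sum_{j=1}^{I+1} c_j X_j\in P$ with $\delta^2V(X,X)\geq 0$. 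The key observation is that the explicit integral formula for the second variation recorded in Section \ref{sec:prelim} shows the integrand at $x$ is determined by $X$ in an arbitrarily small neighbourhood of $x$ and depends quadratically on $X$; hence for vector fields with pairwise disjoint supports the associated symmetric bilinear form (obtained by polarisation) vanishes on off-diagonal pairs, giving
$$\delta^2 V(X,X) = \sum_{j=1}^{I+1} c_j^2\, \delta^2 V(X_j,X_j) < 0,$$
where the strict inequality uses that $X\neq 0$ forces some $c_j\neq 0$ together with $\delta^2V(X_j,X_j)<0$. This contradicts $\delta^2V(X,X)\geq 0$, so at least two of the $\Omega_j$ must intersect.

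For the final sentence, suppose towards contradiction that $\folding(V)\geq I+1$. Then by Definition \ref{defn:folding} one can extract $I+1$ pairwise disjoint open subsets $\Omega_1,\dots,\Omega_{I+1}\subset U$ on each of which $\reg(V)$ is unstable, i.e. $\Index(\reg(V);\Omega_j)\geq 1$ for every $j$. The first part of the lemma forces two of these sets to intersect, a contradiction; hence $\folding(V)\leq I$. The main (minor) obstacle is the bilinear disjoint-supports observation; this is essentially bookkeeping from the second variation formula, but it is the only place the specific structure of $\delta^2V$ is used and it is the crux of why disjoint unstable regions contribute independent negative directions.
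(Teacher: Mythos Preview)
Your proof is correct and follows essentially the same contradiction argument as the paper: produce unstable directions $X_j$ supported in the disjoint $\Omega_j$, take their span, and use disjointness of supports to show the second variation is negative on every nonzero element, contradicting the index bound. You are in fact slightly more explicit than the paper in justifying the decomposition $\delta^2V(X,X)=\sum_j c_j^2\,\delta^2V(X_j,X_j)$ via the local, quadratic nature of the integrand, and in spelling out the folding-number consequence.
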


\begin{proof}
Suppose the lemma is false. Then, we may find a pairwise disjoint collection of open subsets $\Omega_1,\dotsc,\Omega_{I+1}\subset U$ for which $\textnormal{index}(\reg(V);\Omega_j)\geq 1$ for each $j=1,\dotsc,I+1$. By definition, for each such $j$ we can then find $X_j\in C^1_c(\Omega_j\backslash\sing(V);\R^{n+1})$ such that $\delta^2 V(X_j,X_j)<0$. By extension by $0$ outside $\spt(X_j)$, we can view each $X_j$ as an element of $C^1_c(U\backslash\sing(V);\R^{n+1})$. Moreover, we know for $i\neq j$ that $\spt(X_i)\cap \spt(X_j) \subset \Omega_i\cap \Omega_j = \emptyset$, and thus if we set $P:= \textnormal{span}\{X_1,\dotsc,X_{I+1}\}$, $P$ is a subspace of dimension $I+1$ in $C^1_c(U\backslash\sing(V);\R^{n+1})$, and $\delta^2V(X,X)<0$ for all $X\in P$. This then implies that $\Index(\reg(V))\geq I+1$, which contradicts our assumption that $\Index(\reg(V))\leq I$. Thus the lemma must hold. 
\end{proof}

Another useful quantity somewhat dual to the folding number is a useful radius quantity known as the \textit{stability radius}, defined analogously to that in \cite{antoine-song}:

\begin{definition}\label{defn:stab-radius}
Let $U\subset\R^{n+1}$ be an open subset and $V\in \ivarifolds_k(U)$ be a stationary integral varifold. Then the stability radius is the function $s_V:U\to [0,\infty]$ defined by:
$$s_V(x):= \sup\{r\geq 0:\textnormal{index}(\reg(V);\eball_r(x)\cap U) = 0\}.$$
\end{definition}

In this definition, we take $\textnormal{index}(\reg(V);\eball_r(x)) =0$ when $\spt\|V\|\cap\eball_r(x) = \emptyset$. Clearly, if $s_V(x)<\infty$ then $\reg(V)$ is unstable on $\eball_{\lambda s_V(x)}(x)$ for all $\lambda>1$. This immediately implies that if $s_V(x)<\infty$ for some $x\in U$, then $s_V(y)<\infty$ for all $y\in U$, and in particular $s_V(y) \leq s_V(x) + |x-y|$; indeed, if this inequality were not true, then we could find $\lambda>1$ for which $\reg(V)$ is stable in $\eball_{\lambda(s_V(x)+|x-y|)}(y)$. But $\eball_{\tilde{\lambda}s_V(x)}(x)\subset \eball_{\lambda(s_V(x)+|x-y|)}(y)$ for $\tilde{\lambda}\in (1,\lambda)$ sufficiently close to $1$, and thus we would have $B_{\tilde{\lambda}s_V(x)}(x)$ is stable (as the index is non-decreasing with respect to set-inclusion), a contradiction to the definition of $s_V(x)$. In particular, we have a dichotomy that either $s_V\equiv \infty$ (i.e. $\reg(V)$ is stable in $U$) or $s_V$ is finite everywhere.

The above simple argument shows that the stability function is always Lipschitz with Lipschitz constant at most $1$ when $V$ is not stable in $U$; in particular, $s_V$ is continuous. 
\begin{lemma}\label{stability radius continuity}
    Let $U\subset\R^{n+1}$ be open and $V\in \ivarifolds_k(U)$ be a stationary integral varifold. Then, either $s_V\equiv \infty$, or we have for all $x,y\in U$, $|s_V(x)-s_V(y)|\leq |x-y|$.
\end{lemma}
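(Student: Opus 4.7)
The plan is to formalize the argument sketched in the paragraph immediately preceding the lemma. The key ingredient is the \textbf{monotonicity of the index under set inclusion}: if $\Omega_1 \subset \Omega_2 \subset U$ are open, then every $X \in C^1_c(\Omega_1 \setminus \sing(V); \R^{n+1})$ extends by zero to an element of $C^1_c(\Omega_2 \setminus \sing(V); \R^{n+1})$, so any $(I+1)$-dimensional unstable subspace in $\Omega_1$ produces one in $\Omega_2$. Hence $\textnormal{index}(\reg(V); \Omega_1) \leq \textnormal{index}(\reg(V); \Omega_2)$; in particular, instability on a smaller open set implies instability on every larger open set containing it.

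First I would dispose of the dichotomy. If $s_V(x_0) = \infty$ for some $x_0 \in U$, then $\reg(V)$ is stable on $\eball_r(x_0) \cap U$ for every $r > 0$. For any other $y \in U$ and any $r > 0$, the inclusion $\eball_r(y) \subset \eball_{r + |x_0 - y|}(x_0)$ together with monotonicity forces $\reg(V)$ to be stable on $\eball_r(y) \cap U$ as well; hence $s_V(y) = \infty$, so $s_V \equiv \infty$ on all of $U$.

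In the remaining case where $s_V$ is finite everywhere, fix $x,y \in U$ and $\lambda > 1$. From the definition of $s_V(x)$ together with the monotonicity of the index in the radius, $\textnormal{index}(\reg(V); \eball_{\lambda s_V(x)}(x) \cap U) \geq 1$; that is, there exists $X \in C^1_c((\eball_{\lambda s_V(x)}(x) \cap U) \setminus \sing(V); \R^{n+1})$ with $\delta^2 V(X,X) < 0$. The triangle inequality gives $\eball_{\lambda s_V(x)}(x) \subset \eball_{\lambda s_V(x) + |x-y|}(y)$, so viewing $X$ as supported in this larger ball yields $\textnormal{index}(\reg(V); \eball_{\lambda s_V(x) + |x-y|}(y) \cap U) \geq 1$, and hence $s_V(y) \leq \lambda s_V(x) + |x-y|$. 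Letting $\lambda \downarrow 1$ produces $s_V(y) \leq s_V(x) + |x-y|$, and swapping the roles of $x$ and $y$ yields the desired $|s_V(x) - s_V(y)| \leq |x-y|$. The only conceptual step requiring any care is the monotonicity of the index under set inclusion, but that is essentially automatic from extension by zero of test vector fields; I do not anticipate any real technical obstacle.
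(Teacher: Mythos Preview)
Your proposal is correct and follows essentially the same approach as the paper: both arguments reduce to the one-sided inequality $s_V(y)\leq s_V(x)+|x-y|$ via the monotonicity of the index under set inclusion and the triangle-inequality containment of balls, then symmetrize. The paper phrases the one-sided bound as a contradiction (assuming a larger stable ball about $y$ forces stability of a too-large ball about $x$), while you argue directly by pushing instability from $x$ to $y$ and letting $\lambda\downarrow 1$; these are contrapositives of each other.
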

\begin{proof}
    Suppose $s_V\not\equiv \infty$, and so $s_V<\infty$ pointwise. In the above discussion we saw that for any $x,y\in U$, we have $s_V(y) \leq s_V(x) + |x-y|$. Swapping $x$ and $y$ in this identity and combining gives $|s_V(x) - s_V(y)| \leq |x-y|$, as desired.
\end{proof}

The next fact we need is that stationary integral varifolds with bounded index on their regular part are in fact locally stable about every point. This is a well-known fact about regular points, as indeed a smooth minimal submanifold is in fact locally area-minimising (see \cite{federer-2}*{Section 4}) and hence locally stable.

\begin{lemma}\label{locally stable}
    Let $U\subset\R^{n+1}$ be open and let $V\in \ivarifolds_k(U)$ be a stationary integral varifold obeying $\Index(\reg(V))<\infty$. Then, for each $x\in \spt\|V\|$, there exists $\rho_x>0$ for which $\Index(\reg(V);\eball_{\rho_x}(x)) = 0$; in particular, $s_V(x)\geq \rho_x>0$.
\end{lemma}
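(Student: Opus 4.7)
The plan is to split the argument into two cases based on whether $x\in \reg(V)$ or $x \in \sing(V)$.

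For $x\in\reg(V)$, the conclusion is immediate from the classical fact already cited in the paragraph preceding the lemma (see \cite{federer-2}*{Section 4}): smooth minimal $k$-submanifolds are locally area-minimising, and in particular locally stable. I would simply take $\rho_x>0$ small enough that $\spt\|V\|\cap \eball_{\rho_x}(x)$ is a smooth minimal submanifold contained in a ball on which the area-minimising property holds, so $\Index(\reg(V);\eball_{\rho_x}(x))=0$.

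For $x\in\sing(V)$, I would set $I:=\Index(\reg(V))<\infty$ and argue by contradiction: suppose $\Index(\reg(V);\eball_r(x))\geq 1$ for every $r>0$. The key step is to iteratively construct $I+1$ vector fields $X_1,\dots,X_{I+1}\in C^1_c(U\setminus\sing(V);\R^{n+1})$ with pairwise disjoint supports, each destabilising. First, pick $r_1>0$ and use the contradiction hypothesis to produce $X_1\in C^1_c(\eball_{r_1}(x)\setminus\sing(V);\R^{n+1})$ with $\delta^2V(X_1,X_1)<0$; since $x\in\sing(V)$ and $\spt(X_1)$ is a compact subset of $U\setminus\sing(V)$, the distance $d_1:=\dist(x,\spt(X_1))$ is strictly positive. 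I would then choose $r_2\in(0,d_1)$ and repeat to produce $X_2\in C^1_c(\eball_{r_2}(x)\setminus\sing(V);\R^{n+1})$; its support lies in $\eball_{d_1}(x)$, hence is disjoint from $\spt(X_1)$. Iterating this procedure $I+1$ times produces the desired collection. Taking pairwise disjoint open neighbourhoods $\Omega_j\subset U\setminus\sing(V)$ of $\spt(X_j)$ gives $\Index(\reg(V);\Omega_j)\geq 1$ for each $j$, which directly contradicts Lemma \ref{bounded index disjoint sets}.

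The main subtlety is arranging the disjointness of the supports in the iterative construction. This crucially uses that $x\in\sing(V)$, so that any $C^1_c$ vector field supported in $U\setminus\sing(V)$ is automatically at a positive distance from $x$, allowing us to shrink the next ball $\eball_{r_{k+1}}(x)$ strictly inside the complement of the union of the previous supports. The regular-point case is, by contrast, essentially classical and immediate.
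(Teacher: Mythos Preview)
Your proposal is correct and follows essentially the same approach as the paper: the regular-point case is dispatched via \cite{federer-2}*{Section 4}, and the singular-point case proceeds by contradiction, exploiting that $x\in\sing(V)$ forces any destabilising $C^1_c$ vector field to have support at positive distance from $x$, so one can iteratively shrink balls to produce disjoint unstable regions and contradict Lemma~\ref{bounded index disjoint sets}. The only cosmetic differences are that the paper phrases the iteration using annuli $B_{\rho_i}(x)\setminus B_{\tau_{\rho_i}}(x)$ and produces infinitely many such regions (rather than just $I+1$), and it first establishes stability on $B_{\rho_x}(x)\setminus\{x\}$ before deducing it on $B_{\rho_x}(x)$; these are equivalent formulations of the same argument.
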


\begin{proof}
    As discussed before the statement of the lemma, this result follows by \cite{federer-2}*{Section 4} whenever $x\in\reg(V)$. So let us assume that $x\in \sing(V)$. We first claim that there exists $\rho_x>0$ for which $$\Index(\reg(V);\eball_{\rho_x}(x)\backslash\{x\}) = 0.$$
    Indeed, suppose this were not true. Then, for each $\rho>0$, we would have $$\Index(\reg(V);\eball_{\rho}(x)\backslash\{x\})\geq 1.$$ Hence, we can find a (non-zero) vector field $X\in C^1_c(U\backslash\sing(V);\R^{n+1})$ for which $\delta^2V(X,X) <0$ and $\spt(X)\subset (B_\rho(x)\backslash\{x\})\cap \reg(V)$. In particular, there is a $\tau_\rho\in (0,\rho)$ for which $\spt(X)\subset B_\rho(x)\backslash B_{\tau_\rho}(x)$. Thus, we may find a sequence $\rho_i\downarrow 0$ with $\rho_{i+1}<\tau_{\rho_i}$ for all $i$ and, for each $i\geq 1$, $\Index(\reg(V);B_{\rho_i}(x)\backslash B_{\tau_{\rho_i}}(x))\geq 1$. But then as $(B_{\rho_i}(x)\backslash B_{\tau_{\rho_i}}(x))_{i=1}^\infty$ are pairwise disjoint open sets, this is a direct contradiction to the fact that $\Index(\reg(V))<\infty$ (the contradiction coming via Lemma \ref{bounded index disjoint sets}). Hence the claim holds.
    
    We now further claim that, with this radius $\rho_x$, $\Index(\reg(V),\eball_{\rho_x}(x)) = 0$. Indeed, as $x\in \sing(V)$, if this were not true then one may find a non-zero $X\in C^1_c(\eball_{\rho_x}(x)\backslash\sing(V);\R^{n+1})$ for which $\delta^2V(X,X)<0$. But as $x\in \sing(V)$, $\spt(X)\subset \eball_{\rho_x}(x)\backslash \{x\}$, and so there is a $\tau>0$ for which $\spt(X)\subset \eball_{\rho_x}(x)\backslash\eball_{\tau}(x)$, a direct contradiction to the above claim. This concludes the proof of the lemma.
\end{proof}

We now introduce the last general piece of terminology we need, namely the \textit{regularity scale} of a varifold (as in \cite{cheeger-naber2013}*{Section 5.3}).

Given a $k$-dimensional subspace $P\subset \R^{n+1}$, let $\{\eta_1,\dotsc,\eta_{n+1-k}\}$ be a basis of $P^\perp$. We can then represent functions $B_1\cap P\to P^\perp$ as functions $u:B_1\cap P\to \R^{n+1-k}$, with their graphs being given by $\graph(u):= \{p+\sum_{j}u^j(p)\eta_j: p\in B_1\cap P\}$, where $u = (u^1,\dotsc,u^{n+1-k})$.

Now consider $V\in \ivarifolds_k(U)$, $x\in \spt\|V\|$, $\rho>0$, and $q\in \mathbb{Z}_{\geq 0}$. We say that $V\restrictv \eball_\rho(x)$ is a \textit{union of $q$ $C^2$ functions} if there exist $k$-dimensional subspaces $P_1,\dotsc,P_q\subset\R^{n+1}$, and $C^2$ functions $(u_i)_{i=1}^{q}$ with $u_i:B_1\cap P_i\to \R^{n+1-k}$ such that
$$(\eta_{x,\rho})_\#V = \sum^q_{i=1} \mathbf{v}(u_i)\restrictv B_1.$$

\textbf{Note:} The functions $u_i$ need not be distinct, as they can, for example, coincide when $\reg(V)$ occurs with multiplicity $>1$. Moreover, $q$ is not constant: two surfaces intersecting transversely are locally the graph of $2$ distinct functions (with different domains) near any point of intersection, and locally the graph of a single function away from the intersection points.
\begin{definition}\label{regularity scale}
    Let $U\subset\R^{n+1}$ be open, $V\in \ivarifolds_k(U)$, $x\in \spt\|V\|$, and $Q\in\integer_{>0}$. First define
    $$\regscale^Q_{0,V}(x):= \sup\{\rho>0: \eball_\rho(x)\subset U\text{ and }V\restrictv\eball_\rho(x)\textit{ is a union of at most }Q\ C^2\text{ functions}\}$$
    where we set $\sup(\emptyset) := 0$. Now, if $x\in \spt\|V\|$ is such that $\regscale^Q_{0,V}(x) = \rho>0$, and $y\in \eball_\rho(x)\cap \spt\|V\|$, then there exists $q\leq Q$, $k$-dimensional subspaces $P_1,\dotsc,P_q\subset\R^{n+1}$, and $C^2$ functions $(u_i)_{i=1}^q$, $u_i:B_1\cap P_i\to \R^{n+1-k}$ (all depending on $y$) with $y\in \graph(u_i)$ for each $i=1,\dotsc,q$; let $q$ be maximal obeying this. We then define the norm of the second fundamental form of $V$ at $y$ as:
    $$|A_V(y)|:= \sum^q_{i=1}|A_{\graph(u_i)}(y)|.$$
    Then, the regularity scale of $V$ at $x\in \spt\|V\|$ is defined to be
    $$\regscale^Q_{V}(x):= \sup\left\{0<\rho\leq \regscale^Q_{0,V}(x): \sup_{\spt\|V\|\cap \eball_\rho(x)}\rho|A_V| \leq 1\right\}$$
    where once again, if this set is empty (i.e. $\regscale^Q_{0,V}(x) = 0$) we set $\regscale^Q_{V}(x):= 0$.
    
    We then write $\badreg_r^Q(V):= \{x\in\spt\|V\|:\regscale^Q_V(x)\leq r\}$ and $\badreg_r(V):= \{x\in \spt\|V\|:\regscale^Q_V(x)\leq r\ \text{for some }Q\}$.
\end{definition}

\subsection{Codimension one integral varifolds in $\real^{n+1}$}
In this section we give details of the relevant results and definitions for codimension one stationary integral varifolds that will be necessary for our work.

\begin{definition}\label{s-alpha condition}
    Let $n\in\mathbb{Z}_{\geq 2}$, $I\in \mathbb{Z}_{\geq 0}$, and $V\in \ivarifolds_n(\eball_2^{n+1})$. We say that $V\in\salpha_I$ if it obeys the following conditions:
    \begin{enumerate}
        \item  [\textnormal{(1)}] $V$ is stationary in $\eball_2^{n+1}$, i.e. $\delta V\equiv 0$;
        \item  [\textnormal{(2)}] $\Index(\reg(V);\eball^{n+1}_2)\leq I$;
        \item  [\textnormal{(3)}] $\H^{n-1}(\sing(V)) = 0$.
    \end{enumerate}
    Let us write $\salpha := \cup_{I\geq 0}\salpha_I$.
\end{definition}

\textbf{Remark:} From Lemma \ref{locally stable}, we know that any $V\in\salpha_I$ is locally stable and hence, from the regularity theory of Wickramasekera (\cite{wickstable}), obeys $\dim_\H(\sing(V))\leq n-7$\footnote{To be precise, when $n\leq 7$, by this we mean that for $2\leq n\leq 6$ we have $\sing(V) = \emptyset$ and for $n=7$ that $\sing(V)$ is discrete; we will write $\dim_\H(\sing(V))\leq n-7$ as a shorthand for this throughout. However, for our main results we will always have $n\geq 8$.}. In fact, it obeys $\sing(V) = \mathcal{S}_{n-7}$, i.e. it is equal to the $(n-7)^{\text{th}}$ strata (see Definition \ref{defn:strata}), and so is actually countably $(n-7)$-rectifiable by \cite{naber-valtorta}. In particular, on any ball $\Omega\subset\eball^{n+1}_2$, we know that $\reg(V)$ is two-sided in $\Omega$, and thus the second variation is only non-zero for vector fields in the normal direction of $\reg(V)$, namely, if $X = \zeta \nu$, for $\nu$ a choice of unit normal of $\reg(V)$ in $\Omega$, and $\zeta\in C^1_c(\Omega\backslash\sing(V);\R)$, then we have
$$\delta^2V(X,X) = \int_U \left\{|\nabla^V\zeta|^2 - \zeta^2|A|^2\right\}\ d\|V\|.$$
Thus, the bounded index condition in (2) is requiring that only at most $I$ linearly independent $\zeta\in C^1_c(\eball^{n+1}_2\backslash\sing(V);\R)$ can obey
\begin{equation}\label{E:index}
\int_U |A|^2\zeta^2\ d\H^n > \int_U |\nabla^V\zeta|^2\ d\H^n
\end{equation}
(here, we have removed any multiplicity from $V$, as it is constant by connectedness of $\reg(V)\cap \Omega$ and the constancy theorem). In particular, our assumption on the index in condition (2) is equivalent to requiring the ``usual'' condition that $\Index(\reg(V);\Omega)\leq I$ for each ball $\Omega\subset\eball_2^{n+1}$ with $\dim_\H(\Omega\cap \sing(V))\leq n-7$. Furthermore, as $\sing(V)$ is countably $(n-7)$-rectifiable, a simple cut-off argument (based on the fact that the 2-capacity of $\sing(V)$ vanishes) gives that, when $\Index(\reg(V))<\infty$, that $\Index(V) = \Index(\reg(V))$, where by $\Index(V)$ we mean the dimension of the maximal subspace of $\zeta\in C^1_c(B^{n+1}_2;\R)$ which obey \eqref{E:index}.

\textbf{Note:} Condition (3) in Definition \ref{s-alpha condition} can be replaced by the equivalent condition that $V$ contains no classical singularities, in the sense of \cite{wickstable}; the fact that, given (1) and (2), these two conditions are equivalent follows from the local stability provided by Lemma \ref{locally stable} and the regularity theory of \cite{wickstable}.

Our first lemma is a characterisation of the singular set of $V\in\salpha$ as the zero set of the regularity scale:

\begin{lemma}\label{regular implies positive regscale}
    Let $n\geq 2$ and $V\in \salpha$. Then, $x\in \reg(V)$ if and only if $\regscale^Q_V(x)>0$ for some positive integer $Q$.
\end{lemma}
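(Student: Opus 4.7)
The forward direction will be essentially an unpacking of definitions. Given $x\in\reg(V)$, I would pick $\rho_0>0$ small enough that $M := \spt\|V\|\cap \eball_{\rho_0}(x)$ is a connected, smoothly embedded $n$-submanifold of $\R^{n+1}$, apply the constancy theorem to deduce $V\restrictv \eball_{\rho_0}(x) = q\,\mathbf{v}(M)$ for some $q\in\integer_{>0}$, and then shrink $\rho_0$ further so that $M\cap \eball_\rho(x)$ is the graph of a smooth function $u:B_1\cap P\to P^\perp$ over $P = T_xM$. This yields $(\eta_{x,\rho})_\#V = \sum_{i=1}^q \mathbf{v}(u)$, hence $\regscale^q_{0,V}(x)\geq \rho>0$. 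A final shrinking of $\rho$ uses boundedness of $|A_V|$ on $M$ to ensure $\rho\sup_{\spt\|V\|\cap \eball_\rho(x)}|A_V|\leq 1$, giving $\regscale^q_V(x)>0$.

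\textbf{The converse.} Suppose $\regscale^Q_V(x)>0$; then on some $\eball_\rho(x)$ one has $(\eta_{x,\rho})_\#V = \sum_{i=1}^q \mathbf{v}(u_i)$ with $q\leq Q$ and $C^2$ functions $u_i:B_1\cap P_i\to P_i^\perp$. The plan is first to upgrade each $u_i$ to a real-analytic minimal graph: standard first-variation arguments, using the stationarity of $V$ tested against vector fields supported near the open set of one-sheeted points of each $\graph(u_i)$, force each $u_i$ to solve the minimal surface equation on a nonempty open subset of its domain, and hence on all of $B_1\cap P_i$ by unique continuation; in particular each $u_i$ is real analytic. After grouping identical graphs I would take the $\graph(u_i)$ to be pairwise distinct (with appropriate integer multiplicities). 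If only one of these graphs passes through $x$, then locally $\spt\|V\| = \graph(u_{i_0})$ is smoothly embedded, so $x\in\reg(V)$ and we are done.

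\textbf{Excluding multi-sheet singularities.} The main task is to rule out the possibility that two distinct graphs $\graph(u_i), \graph(u_j)$ both pass through $x$. If the tangent planes at $x$ differ, then the two graphs are transverse smooth $n$-submanifolds of $\R^{n+1}$ near $x$, so their intersection is a smooth $(n-1)$-submanifold lying in $\sing(V)$; this forces $\H^{n-1}(\sing(V))>0$, contradicting $V\in\salpha$. If the tangent planes agree, set $P:=P_i=P_j$ and $w:=u_i-u_j$; then $w$ is analytic, nontrivial, vanishes together with $\nabla w$ at the projection of $x$, and satisfies a second-order linear elliptic equation obtained by linearizing the minimal surface equation along the segment joining $u_i$ to $u_j$. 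By Aronszajn's unique continuation and the Han--Lin structure theory of nodal sets, $\{w=0\}$ has Hausdorff dimension $n-1$ near the projection of $x$, and its image in $\R^{n+1}$ lies in $\sing(V)$, again contradicting $\H^{n-1}(\sing(V))=0$.

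\textbf{Main obstacle.} The delicate point will be the upgrade from bare $C^2$ regularity (as provided by the definition of $\regscale^Q_V$) to full analyticity, since the unique continuation and Han--Lin inputs presuppose that the graphs are minimal; this upgrade rests on stationarity of $V$ at one-sheeted points of each $\graph(u_i)$, which must be shown to form a dense subset of each graph. A cleaner alternative that avoids Han--Lin entirely is to invoke the already-known bound $\dim_{\H}(\sing(V))\leq n-7$ for $V\in\salpha$ (from Wickramasekera, see the remark after Definition \ref{s-alpha condition}), since $n-1>n-7$ immediately rules out any $(n-1)$-dimensional piece of $\sing(V)$.
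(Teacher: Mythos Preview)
Your forward direction matches the paper's. For the converse, however, the paper takes a very different and shorter route: it observes that the unique tangent cone $\mathbf{C}$ to $V$ at $x$ is supported on a union of hyperplanes (this is immediate from the $C^2$ graph decomposition), then invokes Wickramasekera's minimal distance theorem to rule out $\geq 2$ distinct hyperplanes in $\spt\|\mathbf{C}\|$ (since that would force a classical singularity), and finally applies the sheeting theorem to conclude $x\in\reg(V)$. No PDE analysis of the individual $u_i$, no nodal-set theory, no case split on transverse versus tangential touching.

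Your approach can be made to work but needs two repairs. First, the ``upgrade'' step is not unique continuation: knowing a $C^2$ function solves the minimal surface equation on a nonempty open set does \emph{not} propagate to the whole domain by unique continuation (that principle compares two solutions, not one $C^2$ function). What you actually need is that the set of points where $u_i$ is minimal is \emph{dense}, after which continuity of $D^2u_i$ finishes it; density follows because $\H^{n-1}(\sing(V))=0$ forces $\reg(V)\cap\graph(u_i)$ to be dense in $\graph(u_i)$, and near any such point $\graph(u_i)$ coincides with the smooth minimal hypersurface $\reg(V)$. Second, in the tangential case your notation $P:=P_i=P_j$ conflates the domain planes with the common tangent plane at $x$; you should first re-express both graphs over $T_xV$ before forming $w$. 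With these fixes your argument is sound, and your ``cleaner alternative'' (using $\dim_{\H}\sing(V)\le n-7$) still requires producing a piece of $\sing(V)$ of dimension $>n-7$ in the tangential case, so it does not fully bypass the sign-change/separation argument for $\{w=0\}$. The trade-off: your route is more self-contained PDE, while the paper's leans directly on the Wickramasekera machinery already in play elsewhere in the paper.
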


\begin{proof}
One direction is clear: indeed, if $x\in \reg(V)$, then there is a $\rho>0$ such that $\spt\|V\|\cap \eball_\rho(x)$ is an embedded hypersurface, and furthermore is expressible as a smooth graph over the unique tangent plane $T_x\spt\|V\|$. By the constancy theorem, $V\restrictv \eball_\rho(x)$ is then a constant integer multiple of this graph, which shows that $\regscale_V^Q(x)>0$, where $Q=\Theta_V(x)\in\mathbb{Z}_{\geq 1}$.

Now suppose $x\in\spt\|V\|$ satisfies $\regscale^Q_V(x)>0$ for some $Q\in\mathbb{Z}_{\geq 1}$; thus, locally about $x$ we have that $V$ is a sum of $Q_*$ embedded (indeed graphical) $C^2$ hypersurfaces, for some $Q_*\leq Q$. In particular, as each such hypersurface has unique tangent cones at every point which are (multiplicity one) hyperplanes, this means that $V$ has a unique tangent cone $\mathbf{C}$ at $x$ which is supported on a union of hyperplanes. However, the minimal distance theorem of Wickramasekera (\cite{wickstable}*{Theorem 3.4}, which we can apply as $V$ is stable locally about $x$ by assumption of it being a union of codimension one graphs) implies that in fact $\mathbf{C}$ must be supported on a \textit{single} hyperplane (as if there were more than one, this would create a classical singularity in $\mathbf{C}$, contradiction the minimal distance theorem for $V$). But now we can apply Wickramasekera's sheeting theorem (\cite{wickstable}*{Theorem 3.3}) to see that in fact $x\in\reg(V)$, which completes the proof.
\end{proof}

\textbf{Remark:} All we needed for the above proof was stationarity of $V$ and $\H^{n-1}(\sing(V)) = 0$; we did not need the bounded index assumption.

Let us recall the main compactness theorem of Wickramasekera's regularity theory, which applies to the class $\salpha_0$:

\begin{theorem}[\cite{wickstable}*{Theorem 3.1}]\label{compactness theorem}
    Let $n\in \mathbb{Z}_{\geq 2}$. Suppose $(V_i)_{i=1}^\infty\subset \salpha_0$ is a sequence satisfying $\sup_i\|V_i\|(\eball_2^{n+1})<\infty$. Then, there exists a subsequence $(V_{i_j})_{j=1}^\infty$ and $V\in \salpha_0$ such that $V_{i_j}\to V$ as varifolds in $\eball^{n+1}_2$ and smoothly (i.e. in the $C^k$ topology for each $k$) locally in $\eball^{n+1}_2\backslash\sing(V)$.
\end{theorem}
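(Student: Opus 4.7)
The starting point is Allard's compactness theorem for integral varifolds: since each $V_i$ is stationary with $\sup_i \|V_i\|(\eball^{n+1}_2)<\infty$, a subsequence (not relabelled) converges in the varifold sense to a stationary integral $n$-varifold $V$ on $\eball^{n+1}_2$, and by lower semicontinuity of mass $\|V\|(\eball^{n+1}_2)\leq \sup_i\|V_i\|(\eball^{n+1}_2)<\infty$. It then remains to verify the three conditions defining $\salpha_0$ for $V$ and to upgrade varifold convergence to smooth convergence locally on $\eball^{n+1}_2\backslash\sing(V)$.

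For the index condition on $\reg(V)$, I would argue as follows: given any $X\in C^1_c(\eball^{n+1}_2\backslash \sing(V);\R^{n+1})$, once smooth convergence $V_i\to V$ is established on a neighbourhood of $\spt(X)$ (see below), one can pull $X$ back to admissible vector fields $X_i$ supported in $\reg(V_i)$ for $i$ large, whose second variations $\delta^2 V_i(X_i,X_i)\geq 0$ (by stability of $V_i$) pass to the limit to yield $\delta^2 V(X,X)\geq 0$; hence $\Index(\reg(V);\eball^{n+1}_2)=0$.

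The critical step is to verify $\H^{n-1}(\sing(V))=0$, which, given local stability (Lemma \ref{locally stable}) and Wickramasekera's regularity theorem, is equivalent to ruling out classical singularities of $V$. Suppose for contradiction that $V$ had a classical singularity at some $x_0\in\sing(V)$, so that some tangent cone $\BC\in\tangentcones_{x_0}(V)$ is supported on at least three half-hyperplanes meeting along a common $(n-1)$-plane. Rescaling about $x_0$ and combining with varifold convergence, one could produce $V_i$ arbitrarily close in the varifold metric $\mathbf{d}$ to $\BC$ on a fixed ball; the minimum distance theorem \cite{wickstable}*{Theorem 3.4}, applied to the stable $V_i$ which by assumption have no classical singularities, forbids this and gives the required contradiction. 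With $V$ thus placed in $\salpha_0$, the sheeting theorem \cite{wickstable}*{Theorem 3.3} applied to the $V_i$ near any $x\in \reg(V)$ (where $V$ has a multiplicity-$q$ planar tangent cone) expresses $V_i$ near $x$ as a sum of $q$ graphs $u_{i,j}$ solving the minimal surface equation; standard elliptic estimates then upgrade varifold convergence to $u_{i,j}\to u_j$ in every $C^k$ norm, giving smooth convergence $V_i\to V$ locally on $\eball^{n+1}_2\backslash \sing(V)$.

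The main obstacle is the persistence of the no-classical-singularities condition in the limit: a priori the varifold limit of stable integral hypersurfaces with no classical singularity could develop such a singularity (three or more smooth sheets forming a common boundary), and excluding this requires the full power of Wickramasekera's blow-up analysis together with the minimum distance theorem for stable singular cones, which is precisely the technical core of \cite{wickstable}. In particular, verifying stability of $V$ and ruling out classical singularities in $V$ are coupled to the sheeting/regularity step through a delicate bootstrap that is subsumed into the single compactness statement quoted here.
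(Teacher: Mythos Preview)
The paper does not supply its own proof of this statement; it is quoted as a black box from \cite{wickstable}*{Theorem 3.1} (note the attribution in the theorem heading and the sentence ``Let us recall the main compactness theorem of Wickramasekera's regularity theory\ldots'' preceding it). So there is no in-paper argument to compare against.

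That said, your outline is a faithful high-level summary of how the proof runs in \cite{wickstable}: Allard compactness produces a stationary integral limit $V$; near any $x\in\reg(V)$ the $V_i$ are eventually close to a multiplicity-$q$ plane, so the sheeting theorem applied to the $V_i$ (which are in $\salpha_0$ by hypothesis) yields graphical decompositions and hence local smooth convergence; stability of $\reg(V)$ then follows by passing $\delta^2 V_i\geq 0$ through this smooth convergence; and a classical singularity in $V$ is excluded by a diagonal rescaling argument that would place some $V_{i_k}\in\salpha_0$ arbitrarily close to a non-planar classical cone, contradicting the minimum distance theorem. You are also right that the genuine content is buried in the minimum distance theorem and the sheeting theorem themselves, whose proofs occupy the bulk of \cite{wickstable} and are not reproduced in the present paper.

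One small remark on the logical order: your sketch defers the smooth-convergence step to after the verification that $V\in\salpha_0$, but in fact the sheeting argument near $\reg(V)$ only uses that the $V_i$ lie in $\salpha_0$ and are close to a plane (the latter coming from $V_i\to V$ and the definition of $\reg(V)$), so it does not depend on first knowing $V\in\salpha_0$. This removes the apparent circularity you flag between the stability check and the sheeting step.
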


We also recall the so-called sheeting theorem of Wickramasekera, stated in terms of the regularity scale previously defined:

\begin{theorem}[Sheeting Theorem, \cite{wickstable}*{Theorem 3.3}]\label{sheeting theorem}
    Let $n\in \mathbb{Z}_{\geq 2}$, $\Lambda>0$, and $\theta\in (0,1)$. Then, there exists $\varepsilon_0 = \varepsilon_0(n,\Lambda,\theta)\in (0,1)$ and $Q_0 = Q_0(n,\Lambda,\theta)\in \mathbb{Z}_{\geq 1}$ such that the following is true: whenever $V\in \salpha_0$ with $0\in \spt\|V\|$ satisfies:
    \begin{enumerate}
        \item [\textnormal{(a)}] $(2^n\omega_n)^{-1}\|V\|(\eball^{n+1}_2)\leq \Lambda$;
        \item [\textnormal{(b)}] $\textnormal{dist}_\H(\spt\|V\|\cap (B^n_1(0)\times\R), B^n_1(0))<\varepsilon_0$;
    \end{enumerate}
    then we have $\regscale^{Q_0}_V(0)\geq \theta$; here, $\omega_n := \H^n(B_1^n(0))$ and $\textnormal{dist}_\H$ denotes the Hausdorff distance.
\end{theorem}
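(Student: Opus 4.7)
The plan is to derive this restatement directly from Wickramasekera's original Sheeting Theorem (\cite{wickstable}*{Theorem 3.3}) in its native $C^{1,\alpha}$-graphical form. Given $\Lambda$ and $\theta$, Wickramasekera's result yields, for any $\varepsilon_1 = \varepsilon_1(n,\Lambda,\theta)>0$ sufficiently small and $V$ satisfying (a) and (b) with $\varepsilon_0 \leq \varepsilon_1$, that $\supp\|V\|\cap (B_{15/16}^n(0)\times\R)$ is the union of graphs of $C^{1,\alpha}$ functions $u_1,\dots,u_{Q_*}:B^n_{15/16}(0)\to\R$ with $\|u_i\|_{C^{1,\alpha}}<\psi(\varepsilon_0)$, where $\psi(\varepsilon_0)\downarrow 0$ as $\varepsilon_0\downarrow 0$. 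The number of sheets $Q_*$ is controlled: since each sheet contributes mass $\geq\omega_n(15/16)^n$ (with multiplicity $\geq 1$), the mass bound (a) forces $Q_*\leq Q_0$ for some $Q_0 = Q_0(n,\Lambda)\in\mathbb{Z}_{\geq 1}$.

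Next I would upgrade $C^{1,\alpha}$ regularity to $C^2$ control with quantitative estimates. Each function $u_i$ satisfies the minimal surface equation (a quasilinear uniformly elliptic PDE once $\|\nabla u_i\|_{C^0}$ is small), so standard Schauder estimates applied on interior balls give
$$\|u_i\|_{C^{2}(B^n_{7/8}(0))}\leq C(n)\|u_i\|_{C^{1,\alpha}(B^n_{15/16}(0))}\leq C(n)\psi(\varepsilon_0).$$
Consequently, for each point $y\in\graph(u_i)\cap(B^n_{7/8}(0)\times\R)$,
$$|A_{\graph(u_i)}(y)|\leq C(n)\|u_i\|_{C^2}\leq C(n)\psi(\varepsilon_0).$$
Summing over the at most $Q_0$ sheets passing through $y$ yields
$$|A_V(y)|\leq Q_0\cdot C(n)\psi(\varepsilon_0).$$

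To conclude, I verify the two conditions in Definition \ref{regularity scale} with the radius $\theta$. First, for $\regscale^{Q_0}_{0,V}(0)\geq\theta$: after rescaling by $\eta_{0,\theta}$, the representation of $V$ as a union of $Q_*\leq Q_0$ graphs of $C^2$ functions on the appropriate hyperplane over $B_1$ is immediate from the above (here we use $\theta<1$ so that $B^{n+1}_\theta(0)\subset B^n_{7/8}(0)\times\R$ after a mild enlargement absorbed into the choice of $\varepsilon_1$). Second, the second fundamental form bound: we need $\theta\sup_{B^{n+1}_\theta(0)\cap\supp\|V\|}|A_V|\leq 1$, which by the estimate above is ensured provided
$$\theta \cdot Q_0\cdot C(n)\psi(\varepsilon_0)\leq 1.$$
Since $\psi(\varepsilon_0)\to 0$ as $\varepsilon_0\to 0$ while $Q_0$ and $\theta$ are fixed, this holds after shrinking $\varepsilon_0$ once more to some $\varepsilon_0 = \varepsilon_0(n,\Lambda,\theta)\in(0,1)$. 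This yields $\regscale^{Q_0}_V(0)\geq \theta$, as required.

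The substantive content is all imported from \cite{wickstable}; the only genuine obstacle is bookkeeping, namely verifying that the sheet count $Q_0$ can be chosen depending only on $n$ and $\Lambda$ (independent of $\theta$), and that the passage $C^{1,\alpha}\to C^2$ via elliptic regularity produces an estimate that genuinely vanishes with $\varepsilon_0$ and not merely stays bounded. Both issues are standard: the first follows from the multiplicity-weighted mass lower bound for each sheet, and the second from the fact that Schauder estimates are linear in the solution norm on the right-hand side once $C^1$-smallness brings the PDE into its linearisable regime.
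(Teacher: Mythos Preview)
Your proposal is correct and follows essentially the same approach as the paper's proof: invoke Wickramasekera's sheeting theorem together with Schauder estimates to obtain $C^2$ graphical decomposition with quantitative control, bound the number of sheets $Q_0$ via the mass hypothesis, and then verify the two conditions in the definition of $\regscale^{Q_0}_V$ by making $\varepsilon_0$ small. The only cosmetic difference is that you apply the sheeting theorem on a fixed domain $B^n_{15/16}(0)$ and then restrict to $B^{n+1}_\theta(0)$, which yields $Q_0=Q_0(n,\Lambda)$ independent of $\theta$, whereas the paper applies it directly at scale $\theta$ and thus records $Q_0=Q_0(n,\Lambda,\theta)$; since the statement permits dependence on $\theta$, both are acceptable.
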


\begin{proof}
    It follows from Schauder theory and the sheeting theorem of Wickramasekera (\cite{wickstable}*{Theorem 3.3}) that there exists $\varepsilon_0 = \varepsilon_0(n,\Lambda,\theta)\in (0,1)$ and $C_1 = C_1(n,\Lambda,\theta)\in (0,\infty)$ such that if the above assumptions hold, then there is a $Q\in\mathbb{Z}_{\geq 1}$ and smooth functions $u_i:B_\theta^n(0)\to \R$ for $i=1,\dotsc,Q$ such that
    $$V\restrictv (B^n_\theta(0)\times\R) = \sum^Q_{i=1}\mathbf{v}(u_i)$$
    and moreover that $\|u_i\|_{C^4}\leq C_1\varepsilon_0$ for each $i=1,\dotsc,Q$. Clearly we have $\|\mathbf{v}(u_i)\|(B_\theta^{n}(0)\times\R) \geq \H^n(B_\theta^{n}(0))$ for each $i=1,\dotsc,Q$, which gives $Q\omega_n\theta^n \leq (2^n\omega_n)\Lambda$, i.e. $Q\leq Q_0$, where $Q_0 = Q_0(n,\Lambda,\theta)$. We can clearly choose $\varepsilon_0$ small enough to guarantee that $\graph(u_i)\cap \eball^{n+1}_\theta(0)\neq\emptyset$ for all $i=1,\dotsc,Q$, and thus we have $\regscale^Q_{0,V}(0)\geq \theta$ (as defined in Definition \ref{regularity scale}). From our estimate on $\|u_i\|_{C^4}$ for each $i=1,\dotsc,Q$, we clearly have for any $\rho\in (0,\theta)$,
    $$\sup_{\spt\|V\|\cap B_\rho^{n+1}(0)}\rho|A_V| \leq \theta Q_0 C_1\varepsilon_0$$
    and so this is $\leq 1$ when $\theta Q_0C_1\varepsilon_0 \leq 1$; this can be guaranteed by taking $\varepsilon_0 = \varepsilon_0(n,\Lambda,\theta)$ smaller if necessary. Thus, this shows that $\regscale^Q_V(0)\geq \theta$, completing the proof, as $\regscale^{Q_0}_V(0)\geq \regscale^Q_V(0)$.
\end{proof}


\section{Quantitative Stratification}\label{sec:strata}

In this section we recall the notion of \textit{strata} as well as the \textit{quantitative strata} in codimension one, and prove a relation (essentially an $\varepsilon$-regularity theorem) between the regularity scale and certain strata.

\begin{definition}
    Let $\BC\in\ivarifolds_n(\R^{n+1})$. We say that $\BC$ is a cone if $(\eta_{0,r})_\#\BC = \BC$ for all $r>0$.
\end{definition}

Given a cone $\BC\in \ivarifolds_n(\R^{n+1})$, we define the \textit{spine} of $\BC$, denoted $S(\BC)$, to be the set of points along which $\BC$ is translation invariant, i.e.
$$S(\BC):= \{x\in \R^{n+1}:(\eta_{x,1})_\#\BC = \BC\}.$$
It is simple to check that $S(\BC)\subset\R^{n+1}$ is a subspace; thus, $\dim(S(\BC))\leq n$. We then say that $\BC$ is $k$\textit{-symmetric} if $\dim(S(\BC)) \geq k$. Let us write $\kcone_k\subset\ivarifolds_n(\R^{n+1})$ for the set of (codimension one) $k$-symmetric cones. Note that when $S(\BC)\neq \spt\|\BC\|$ (i.e. $\BC$ is not supported on a hyperplane), then $S(\BC)\subset \sing(\BC)$.

Next we define a quantitative notion of being \textit{almost conical} for varifolds, in the same way as seen in \cite{naber-valtorta}*{Definition 1.1} and \cite{cheeger-naber2013}*{Definition 5.3}.

\begin{definition}\label{defn:conical}
    Fix $U\subset\R^{n+1}$ open, and let $\delta>0$, $r>0$, and $k\in \{0,1,\dotsc,n\}$. We say that $V\in\ivarifolds_n(U)$ is $(\delta,r,k)$-conical at a point $x\in \spt\|V\|$ if $\eball_r(x)\subset U$ and there exists a $k$-symmetric cone $\BC\in\kcone_k$ such that
    $$\mathbf{d}\left((\eta_{x,r})_\#V\restrictv \eball_1,\BC\restrictv\eball_1\right)\leq \delta;$$
    here, we recall $\mathbf{d}$ is the metric corresponding to the Fr\' echet structure of varifold topology, which induces the same topology.
\end{definition}

We now define various stratifications of the singular set, as in \cite{naber-valtorta}*{Definition 1.2}:

\begin{definition}\label{defn:strata}
    Fix $U\subset\R^{n+1}$ open and let $\delta>0$, $R\in (0,1]$, $r\in (0,R)$, and $V\in \ivarifolds_n(U)$ with bounded first variation. Then for each $k\in\{0,\dotsc,n\}$, we define:
    \begin{enumerate}
        \item [\textnormal{(1)}] The $k^{\text{th}}$ $(\delta,r,R)$-stratification by:
        $$\strata^k_{\delta,r,R}(V):= \{x\in\spt\|V\|: V\text{ is not }(\delta,s,k+1)\text{-conical at }x\text{ for all }s \in [r,R)\};$$
        \item [\textnormal{(2)}] The $k^{\text{th}}$ $\delta$-stratification by:
        $$\strata^k_\delta(V):= \bigcap_{0<r<1}\strata^k_{\delta,r,1}(V);$$
        \item [\textnormal{(3)}] The $k^{\text{th}}$ stratification by:
        $$\strata^k(V):= \bigcup_{\delta>0}\strata^k_\delta(V).$$
    \end{enumerate}
\end{definition}

\textbf{Note:} We have $\strata^k(V) \equiv \{x\in\spt\|V\|: \dim(S(\BC))\leq k\text{ for each }\BC\in\tangentcones_x(V)\}$.

By definition, $\strata^0(V)\subset\cdots\subset\strata^{n-1}(V)\subset\strata^n(V)\equiv \spt\|V\|$, and $\strata^{n-1}(V)\subset\sing(V)$. We remark that it is well-known (see \cite{almgren} or \cite{simon-cylindrical}*{(1.10)}) that $\dim_\H(\strata^k(V))\leq k$ for each $k\in\{0,1,\dotsc,n\}$, and moreover that $\strata^k(V)$ is countably $k$-rectifiable (see \cite{naber-valtorta}). We have already remarked that in the above (codimension one) situation, for $V\in \salpha$ we have $\sing(V) = \strata^{n-7}(V)$ (which follows from the regularity theory \cite{wickstable} along with the local stability provided by Lemma \ref{locally stable}); in particular, this applies to codimension one area minimisers.

We note the following immediate consequence of the definition:

\begin{lemma}\label{rescaled strata}
    Let $U\subset\R^{n+1}$ be open, $\delta>0$, $0<r<R\leq 1$, $k\in\{0,1,\dotsc,n\}$ and $V\in\ivarifolds_n(U)$ with bounded first variation. Then for any $x\in \spt\|V\|$ and $\rho>0$ with $\eball_\rho(x)\subset U$, we have
    $$\eta_{x,\rho}\left(\strata^k_{\delta,r,R}(V)\right) = \strata^k_{\delta,r/\rho,R/\rho}\left((\eta_{x,\rho})_\#V\right).$$
\end{lemma}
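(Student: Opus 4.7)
The statement is a direct unwinding of the definitions, so my plan is purely computational: reduce everything to a single identity about composing homothetic rescalings, then read off the claim from the definition of the stratification.

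The key observation is as follows. Given $x,y \in \R^{n+1}$ and $\rho,s > 0$, set $z := \eta_{x,\rho}(y) = (y-x)/\rho$. Then I would verify by direct calculation that the composition of the two homotheties simplifies:
$$\eta_{z,s} \circ \eta_{x,\rho} = \eta_{y,s\rho}.$$
Indeed, for $w \in \R^{n+1}$, the left-hand side equals $(w - x - \rho z)/(s\rho)$, and since $\rho z = y - x$ this becomes $(w - y)/(s\rho)$. Pushing forward $V$ by both sides yields
$$(\eta_{z,s})_\#\bigl((\eta_{x,\rho})_\# V\bigr) = (\eta_{y,s\rho})_\# V,$$
i.e., rescaling the pushforward by $s$ about $z$ is the same as rescaling $V$ by $s\rho$ about $y$. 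In particular, for any cone $\BC \in \kcone_{k+1}$,
$$\mathbf{d}\bigl((\eta_{z,s})_\#(\eta_{x,\rho})_\# V \restrictv \eball_1,\, \BC \restrictv \eball_1\bigr) = \mathbf{d}\bigl((\eta_{y,s\rho})_\# V \restrictv \eball_1,\, \BC \restrictv \eball_1\bigr).$$
Since the class $\kcone_{k+1}$ is invariant under homothetic rescaling about the origin (cones are cones), it follows immediately from Definition \ref{defn:conical} that $(\eta_{x,\rho})_\# V$ is $(\delta,s,k+1)$-conical at $z$ if and only if $V$ is $(\delta, s\rho, k+1)$-conical at $y$.

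Now I would apply the definition of the $k$-th $(\delta,r,R)$-stratification. A point $z = \eta_{x,\rho}(y)$ lies in $\strata^k_{\delta, r/\rho, R/\rho}\bigl((\eta_{x,\rho})_\# V\bigr)$ exactly when $(\eta_{x,\rho})_\# V$ fails to be $(\delta,s,k+1)$-conical at $z$ for every $s \in [r/\rho, R/\rho)$. By the equivalence above, this happens precisely when $V$ fails to be $(\delta, t, k+1)$-conical at $y$ for every $t = s\rho \in [r,R)$, which is exactly the condition $y \in \strata^k_{\delta, r, R}(V)$. Combined with the obvious bijection $\spt\|(\eta_{x,\rho})_\# V\| = \eta_{x,\rho}(\spt\|V\|)$ (using $\eball_\rho(x) \subset U$), this yields the asserted set equality. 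There is no real obstacle here beyond keeping track of the arithmetic of the scale parameters; the whole argument is bookkeeping once the composition identity for $\eta_{z,s} \circ \eta_{x,\rho}$ is noted.
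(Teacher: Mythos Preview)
Your proposal is correct and follows essentially the same route as the paper: both reduce the lemma to the composition identity for homotheties (the paper writes it as $\eta_{y,s}\circ\eta_{x,\rho} = \eta_{x+\rho y,s\rho}$, which is your identity with the roles of the original and rescaled point swapped) and then read off the equivalence of the conicality conditions directly from the definitions. Your write-up simply spells out in more detail what the paper compresses into a single sentence.
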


\begin{proof}
This is immediate by definition, as $\eta_{y,s}\circ\eta_{x,\rho} = \eta_{x+\rho y,s\rho}$, and so $y\in \strata^k_{\delta,r/\rho,R/\rho}\left((\eta_{x,\rho})_\#V\right)$ if and only if $x+\rho y\in \strata^k_{\delta,r,R}(V)$.
\end{proof}

The following $\varepsilon$-regularity result is the corresponding version of \cite{cheeger-naber2013}*{Theorem 6.2} for locally stable varifolds instead of codimension one area minimising currents.

\begin{theorem}[$\varepsilon$-Regularity Theorem]\label{epsilon regularity}
    Let $n\in\mathbb{Z}_{\geq 2}$, $\Lambda\in (0,\infty)$, and $K\subset\eball^{n+1}_2$ compact. Then, there exist constants $\varepsilon_0 = \varepsilon_0(n,\Lambda,K)\in (0,1)$ and $Q_0 = Q_0(n,\Lambda,K)\in \mathbb{Z}_{\geq 1}$ such that the following holds: if $V\in \ivarifolds_n(\eball^{n+1}_2)$ is a stationary integral varifold, $x\in \spt\|V\|\cap K$, and $\rho\in (0,d(x,\del\eball^{n+1}_2)]$ satisfy:
    \begin{enumerate}
        \item [\textnormal{(a)}] $\|V\|(\eball^{n+1}_2)\leq \Lambda$;
        \item [\textnormal{(b)}] $(\eta_{x,\rho/2})_\#V\in \salpha_0$;
        \item [\textnormal{(c)}] $V$ is $(\varepsilon_0,\rho/2,n-6)$-conical at $x$;
    \end{enumerate}
    then we have $\regscale^{Q_0}_V(x)\geq \rho/4$.
\end{theorem}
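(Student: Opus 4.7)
The strategy is a standard compactness-and-contradiction argument, using Wickramasekera's regularity theory to identify the limit cone as a multi-sheeted hyperplane and then invoking the sheeting theorem (Theorem \ref{sheeting theorem}) to contradict smallness of the regularity scale.

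First, I would fix $Q_0 = Q_0(n,\Lambda,K) \in \mathbb{Z}_{\geq 1}$ (to be chosen as the $Q_0$ produced by Theorem \ref{sheeting theorem} at parameter $\theta = 1/2$ with the uniform mass bound $\Lambda'$ constructed below), and suppose for contradiction that no $\varepsilon_0>0$ works for this $Q_0$. Taking $\varepsilon_i \downarrow 0$, one obtains sequences $V_i$, $x_i \in K$, $\rho_i \in (0, d(x_i,\del\eball_2^{n+1})]$ satisfying hypotheses (a), (b), and (c) with $\varepsilon_0$ replaced by $\varepsilon_i$, yet $\regscale^{Q_0}_{V_i}(x_i) < \rho_i/4$. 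Setting $W_i := (\eta_{x_i,\rho_i/2})_\# V_i$, then $W_i \in \salpha_0$, $W_i$ is $(\varepsilon_i,1,n-6)$-conical at $0$, and $\regscale^{Q_0}_{W_i}(0) < 1/2$, using Lemma \ref{rescaled strata} and the corresponding rescaling of the regularity scale. Monotonicity of the density ratio, combined with $d(K,\del\eball_2^{n+1}) > 0$, yields a uniform bound $\|W_i\|(\eball_2^{n+1}) \leq \Lambda'$ for some $\Lambda'=\Lambda'(n,\Lambda,K)$ independent of $i$ and $\rho_i$.

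By Theorem \ref{compactness theorem} we may pass to a subsequence with $W_i \to W_\infty \in \salpha_0$. The $(n-6)$-symmetric cones $\BC_i$ witnessing the conical approximation at $0$ have uniformly bounded mass on $\eball_1$ (since they are close to $W_i$ in varifold distance), so by varifold compactness together with Grassmannian compactness applied to their spines we may further assume $\BC_i \to \BC_\infty$, where $\BC_\infty$ is an $(n-6)$-symmetric stationary integral cone. Uniqueness of limits gives $W_\infty \restrictv \eball_1 = \BC_\infty \restrictv \eball_1$. The key step — and the expected main obstacle — is to show $\BC_\infty$ is supported on a single hyperplane through $0$. This is where Wickramasekera's dimension bound enters: since $W_\infty \in \salpha_0$, Lemma \ref{locally stable} combined with \cite{wickstable} gives $\dim_\H(\sing(W_\infty)) \leq n-7$; however, if $\spt\|\BC_\infty\| \neq S(\BC_\infty)$, then $S(\BC_\infty) \subset \sing(\BC_\infty)$ with $\dim S(\BC_\infty) \geq n-6$, which together with $W_\infty \restrictv \eball_1 = \BC_\infty \restrictv \eball_1$ contradicts the above bound. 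Hence $\spt\|\BC_\infty\|$ coincides with its spine, which is necessarily an $n$-dimensional subspace of $\R^{n+1}$; by the constancy theorem, $\BC_\infty = Q\cdot[H]$ for some positive integer $Q$ and hyperplane $H \subset \R^{n+1}$.

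To conclude, varifold convergence $W_i \to W_\infty$ together with monotonicity upgrades to Hausdorff convergence of supports on compact subsets. After rotating coordinates so that $H = \R^n \times \{0\}$, the Hausdorff distance between $\spt\|W_i\| \cap (B_1^n(0)\times\R)$ and $B_1^n(0)$ drops below any preassigned positive quantity for $i$ sufficiently large. Applying Theorem \ref{sheeting theorem} to $W_i$ at $\theta = 1/2$ with mass parameter $\Lambda'$ then yields $\regscale^{Q_0}_{W_i}(0) \geq 1/2$ for $i$ large — with $Q_0 = Q_0(n,\Lambda',1/2) = Q_0(n,\Lambda,K)$ being the very $Q_0$ we pre-fixed — contradicting $\regscale^{Q_0}_{W_i}(0) < 1/2$. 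Undoing the rescaling recovers the conclusion $\regscale^{Q_0}_V(x) \geq \rho/4$ in the original coordinates.
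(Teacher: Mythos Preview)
Your argument is correct and follows essentially the same compactness-and-contradiction route as the paper. The only cosmetic difference is that the paper does not pre-fix $Q_0$ but instead assumes $\regscale^{k}_{V_k}(x_k) < \rho_k/4$ with the superscript $k$ increasing along the contradiction sequence, and then obtains the contradiction from $\regscale^{k_j}_{W_{k_j}}(0) \geq \regscale^{Q_0}_{W_{k_j}}(0) \geq 1/2$ once $k_j \geq Q_0$; your choice to fix $Q_0$ at the outset via Theorem \ref{sheeting theorem} is equally valid.
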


\begin{proof}
We argue this by contradiction, so suppose the claim was false. Then, we can find $\Lambda>0$ and a compact set $K\subset\eball^{n+1}_2$ such that for each $k\in \mathbb{Z}_{\geq 1}$, we can find a varifold $V_k\in \ivarifolds_n(\eball^{n+1}_2)$, a point $x_k\in \spt\|V_k\|\cap K$, and $\rho_k \in (0,d(x_k,\del\eball^{n+1}_2)]$ such that $\|V_k\|(\eball^{n+1}_2)\leq \Lambda$, $(\eta_{0,\rho_k/2})_\#V_k \in \salpha_0$, with $V_k$ being $(1/k,\rho_k/2,n-6)$-conical at $x_k$, yet $\regscale^k_{V_k}(x_k)<\rho_k/4$.

Now set $W_k:= (\eta_{x_k,\rho_k/2})_\#V_k$. Then we have $W_k\in \salpha_0$, and
\begin{align*}
\|W_k\|(\eball_2^{n+1}) \equiv (\rho_k/2)^{-n}\|V_k\|(B_{\rho_k}(x_k))  & \leq 2^n\cdot d(x_k,\del\eball^{n+1}_2)^{-n}\|V_k\|\left(\eball_{d(x_k,\del\eball^{n+1}_2)}(x_k)\right)\\
& \leq 2^n\cdot d(K,\del\eball^{n+1}_2)^{-n}\Lambda
\end{align*}
where in the second inequality here we have used the monotonicity formula for stationary integral varifolds. Moreover, $W_k$ is $(1/k,1,n-6)$-conical at $0$, and $\regscale^k_{W_k}(0) = (\rho_k/2)^{-1}\regscale^k_{V_k}(x_k)<1/2$. 

In particular, $(W_k)_k$ satisfies the conditions of Theorem \ref{compactness theorem}, and so there is a subsequence $(W_{k_j})_j$ and $W\in \salpha_0$ such that $W_{k_j}\weakly W$ as varifolds in $\eball^{n+1}_2(0)$. However, as $W_k$ is $(1/k,1,n-6$)-conical for each $k$, this implies that $W$ must be a cone, with $\dim_\H(S(W))\geq n-6$. However, as $W\in \salpha_0$, we know $\dim_\H(\sing(W))\leq n-7$, and thus this implies that $W$ must be a hyperplane of some integer multiplicity. But then, as the $W_k$ are stationary and so varifold convergence implies local convergence of the supports in Hausdorff distance (see, e.g. \cite{mondino}*{Proposition 3.9}), it follows from Theorem \ref{sheeting theorem} (with $\theta=1/2$) that for all $j$ sufficiently large, we have $\regscale^{Q_0}_{W_{k_j}}(0) \geq 1/2$ for some $Q_0 = Q_0(n,\Lambda,K)$; this is a direct contradiction to the fact that $\regscale^{k_j}_{W_{k_j}}(0)<1/2$ for all $j$ sufficiently large. Hence, the proof is completed.
\end{proof}

\textbf{Remark:} The above proof is slightly different from that for codimension one area minimisers seen in \cite{cheeger-naber2013}, as it does not require estimates on the various stratifications. We also note that the above proof shows that the dependence on the compact set $K\subset\eball^{n+1}_2$ of the constants $\varepsilon_0$ and $Q_0$ in Theorem \ref{epsilon regularity} is in fact only on a lower bound for $d(K,\del\eball^{n+1}_2(0))$ (the same is true for the next Corollary also).

Finally, we remark the following corollary of Theorem \ref{epsilon regularity}, which is essentially a rephrasing of the result when $n\geq 7$ in terms of the quantitative strata.

\begin{corollary}\label{corollary of sheeting theorem}
    Let $n\in \mathbb{Z}_{\geq 7}$, $\Lambda\in (0,\infty)$, and $K\subset\eball^{n+1}_2$ be a compact subset. Then, there exist constants $\varepsilon_0 = \varepsilon_0(n,\Lambda,K)\in (0,1)$ and $Q_0 = Q_0(n,\Lambda,K)\in \mathbb{Z}_{\geq 1}$ such that the following is true: if $V\in \ivarifolds_n(\eball^{n+1}_2)$ is a stationary integral varifold obeying:
    \begin{enumerate}
        \item [\textnormal{(a)}] $\|V\|(\eball^{n+1}_2)\leq \Lambda$;
        \item [\textnormal{(b)}] $(\eta_{x,d(K,\del\eball_2^{n+1})})_\#V\in \salpha_0$ for all $x\in K$;
    \end{enumerate}
    then we have $\badreg^{Q_0}_{\sigma/5}(V)\cap K\subset \strata^{n-7}_{\varepsilon_0,\sigma/2,d(K,\del\eball^{n+1}_2)}(V)\cap K$ for all $\sigma\in (0,d(K,\del\eball^{n+1}_2)]$.
\end{corollary}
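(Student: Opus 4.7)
I would argue by contraposition. Set $R := d(K, \partial \eball_2^{n+1})$, and take $\varepsilon_0 = \varepsilon_0(n,\Lambda,K) \in (0,1)$ and $Q_0 = Q_0(n,\Lambda,K) \in \mathbb{Z}_{\geq 1}$ to be the constants provided by Theorem \ref{epsilon regularity} for the given $n$, $\Lambda$, and compact set $K$. Suppose $x \in K$ satisfies $x \notin \strata^{n-7}_{\varepsilon_0, \sigma/2, R}(V)$; the goal is to show $\regscale^{Q_0}_V(x) > \sigma/5$, equivalently $x \notin \badreg^{Q_0}_{\sigma/5}(V)$.

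By Definition \ref{defn:strata}, the failure of $x$ to lie in $\strata^{n-7}_{\varepsilon_0, \sigma/2, R}(V)$ hands us some scale $s \in [\sigma/2, R)$ at which $V$ is $(\varepsilon_0, s, n-6)$-conical at $x$. Set $\rho := 2s$, so that $\rho/2 = s$. Then hypothesis (c) of Theorem \ref{epsilon regularity} holds at $x$ with this $\rho$, and hypothesis (a) is precisely the mass bound assumed in the corollary. For hypothesis (b), i.e.\ $(\eta_{x, \rho/2})_{\#}V = (\eta_{x,s})_{\#}V \in \salpha_0$, I would observe that this is a consequence of the corollary's hypothesis (b): the three defining properties of $\salpha_0$ in Definition \ref{s-alpha condition} — stationarity, the index-zero condition on the regular part, and $\H^{n-1}$-nullity of the singular set — all descend under restriction to a concentric subregion, and $(\eta_{x,s})_{\#}V$ is just a further homothety of the restriction of $(\eta_{x,R})_{\#}V$ to such a subregion (since $s \leq R$). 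For the index-zero condition in particular, monotonicity of $\Index(\reg(\cdot);\Omega)$ under inclusion of open sets (evident from Definition \ref{definition bounded index}) does the work.

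Once hypotheses (a)–(c) are in hand, Theorem \ref{epsilon regularity} immediately gives
\[
\regscale^{Q_0}_V(x) \geq \rho/4 = s/2 \geq \sigma/4 > \sigma/5,
\]
completing the contrapositive argument. Since $x \in K$ was arbitrary with $x \notin \strata^{n-7}_{\varepsilon_0, \sigma/2, R}(V)$, one concludes $\badreg^{Q_0}_{\sigma/5}(V) \cap K \subset \strata^{n-7}_{\varepsilon_0, \sigma/2, R}(V) \cap K$ as claimed.

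\textbf{Main obstacle.} There is no hard analytic step here; the entire content of the corollary is a careful unpacking of Theorem \ref{epsilon regularity} through the definitions. The one point that requires attention is the passage of the $\salpha_0$ membership from the global scale $R$ down to the intermediate scale $s$, i.e.\ verifying hypothesis (b) of the $\varepsilon$-regularity theorem. A minor bookkeeping point, already flagged in the remark following Theorem \ref{epsilon regularity}, is that the constants $\varepsilon_0, Q_0$ depend on $K$ only through a lower bound on $d(K, \partial \eball_2^{n+1})$, which is exactly what appears in the corollary's statement, so the quantifiers line up cleanly.
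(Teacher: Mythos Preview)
Your proposal is correct and follows essentially the same argument as the paper: choose $\varepsilon_0,Q_0$ from Theorem~\ref{epsilon regularity}, extract a scale $s\in[\sigma/2,R)$ at which $V$ is $(\varepsilon_0,s,n-6)$-conical from the negation of strata membership, and apply the $\varepsilon$-regularity theorem to obtain $\regscale^{Q_0}_V(x)\geq \sigma/4>\sigma/5$. Your treatment of why the $\salpha_0$ membership descends from scale $R$ to scale $s$ is slightly more explicit than the paper's, which simply invokes the containment $\eball_s(x)\subset\eball_{d(K,\partial\eball_2^{n+1})}(x)$.
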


\begin{proof}
    Let $\varepsilon_0 = \varepsilon_0(n,\Lambda,K)\in(0,1)$ and $Q_0 = Q_0(n,\Lambda,K)\in \mathbb{Z}_{\geq 1}$ be as in Theorem \ref{epsilon regularity}. If the result were not true with this choice of $\varepsilon_0$ and $Q_0$, then we could find $\sigma\in (0,d(K,\del\eball^{n+1}_2)]$ and $x\in \badreg^{Q_0}_{\sigma/5}(V)\cap K$ with $x\not\in \strata^{n-7}_{\varepsilon,\sigma/2,d(K,\del\eball^{n+1}_2)}(V)$, i.e. there exists some $s\in [\sigma,d(K,\del\eball^{n+1}_2))$ for which $V$ is $(\varepsilon_0,s/2,n-6)$-conical at $x$. Hence, as $\eball_s(x)\subset \eball_{d(K,\del\eball^{n+1}_2)}(x)$, we may apply Theorem \ref{epsilon regularity} to see that $\regscale^{Q_0}_V(x)\geq {\sigma/4}$, which is a contradiction.
\end{proof}

\section{Singular set estimates}\label{sec:estimates}

In this section we prove Theorem \ref{thm:A}, i.e. local estimates for measure of the tubular neighbourhood of the singular set of codimension one stationary integral varifolds in Euclidean space which have regular part of bounded index.

The following result is the main measure bound on the quantitative strata from \cite{naber-valtorta} (in the codimension one setting):

\begin{theorem}[\cite{naber-valtorta}*{Theorem 1.3 and 1.4}]\label{quantitative estimates bounded variation}
    Let $n\in\mathbb{Z}_{\geq 2}$, $\Lambda\in (0,\infty)$, $H\in (0,\infty)$, and $\varepsilon\in (0,1)$. Then there exists a constant $C_\varepsilon = C_\varepsilon(n,\Lambda,H,\varepsilon)\in (0,\infty)$ such that the following is true: if $V\in\ivarifolds_n(\eball^{n+1}_2)$ has $\|V\|(\eball^{n+1}_2)\leq\Lambda$ and has first variation bounded by $H$ in $\eball^{n+1}_2(0)$, then for each $k\in \{0,1,\dotsc,n\}$ we have
    $$\H^{n+1}\left(\eball_r(\strata^k_{\varepsilon,r,1}(V))\cap \eball_1\right) \leq C_\varepsilon r^{n+1-k}\ \ \ \ \text{for all }r\in (0,1].$$
\end{theorem}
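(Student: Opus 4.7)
The plan is to execute the quantitative stratification scheme pioneered in \cite{naber-valtorta}. The guiding principle is that a point $x\in\strata^k_{\varepsilon,r,1}(V)$ is forced, at every dyadic scale $s\in[r,1]$, to register a nontrivial symmetry defect in a monotone quantity attached to $(V,x)$. Concretely, since $V$ has first variation bounded by $H$, the corrected density $e^{Hs}\theta_V(x,s)$, where $\theta_V(x,s):=\omega_n^{-1}s^{-n}\|V\|(\eball_s(x))$, is monotone non-decreasing in $s$ and uniformly bounded by $e^{2H}\Lambda/\omega_n$; hence the total dyadic drop
$$\sum_{j\geq 0}\bigl[\theta_V(x,2^{-j})-\theta_V(x,2^{-j-1})\bigr]$$
is bounded by some $C=C(n,\Lambda,H)$ uniformly in $x\in\spt\|V\|\cap\eball_1$. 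Via the first variation identity applied to a radial cutoff, each individual drop controls the $L^2$-mass of the component of $y-x$ orthogonal to an approximating $k$-plane, and thus bounds the Jones-type $\beta$-number
$$\beta_k^2(x,s) := \inf_{L\text{ a }k\text{-plane}} s^{-n-2}\int_{\eball_s(x)}\dist(y,x+L)^2\,d\|V\|(y).$$

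Next, I would establish the dichotomy linking varifold non-symmetry to $L^2$ lower bounds: if $V$ fails to be $(\varepsilon,s,k+1)$-conical at $x$, a compactness-and-contradiction argument---exploiting the closedness of bounded-variation integral varifolds under weak convergence together with the fact that a $(k+1)$-symmetric cone has $\beta_k^2\equiv 0$ on its spine---yields a uniform lower bound $\beta_k^2(x,s)\geq c(\varepsilon,n,\Lambda,H)>0$. Combined with the first step, this forces
$$\int\!\!\!\int_{r}^{1}\beta_k^2(x,s)\,\frac{ds}{s}\,d\|V\|(x)\leq C\quad\text{while}\quad\sum_{r\leq 2^{-j}\leq 1}\beta_k^2(x,2^{-j})\geq c(\varepsilon)\log(1/r)$$
for every $x\in\strata^k_{\varepsilon,r,1}(V)$; this is precisely the setup required by the discrete Reifenberg theorem \cite{naber-valtorta}*{Theorem 3.4}.

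Finally, I would iterate a covering at dyadic scales $r_j=2^{-j}$ descending from $1$ to $r$: build a cover $\{\eball_{r_j}(x_i)\}_{i=1}^{N_j}$ of $\strata^k_{\varepsilon,r,1}(V)\cap\eball_1$ and transfer the $\beta_k^2$-integral bound from $\|V\|$ to the packing measure $\mu_j:=\sum_i r_j^k\delta_{x_i}$. The discrete Reifenberg theorem then gives $\mu_j(\eball_1)\leq C(n,\Lambda,H,\varepsilon)$, i.e.\ $N_j\leq C r_j^{-k}$. Terminating the iteration at $r_j\sim r$ and summing the volumes $N_j\cdot \omega_{n+1}(2r_j)^{n+1}$ yields the desired $\H^{n+1}(\eball_r(\strata^k_{\varepsilon,r,1}(V))\cap\eball_1)\leq C_\varepsilon r^{n+1-k}$.

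The chief obstacle is the second step: promoting the weak varifold-distance non-conicality hypothesis into a quantitative $L^2$ lower bound on $\beta_k^2$ must be uniform across the class of bounded-mass, bounded-variation integral varifolds, which requires a delicate compactness argument coupled with a rigidity characterisation of $(k+1)$-symmetric cones via vanishing of $\beta_k^2$ on their spine. A secondary, but substantial, difficulty is the discrete Reifenberg theorem itself, whose proof---based on an inductive Lipschitz-graph approximation of the support of $\mu$---is the technical heart of \cite{naber-valtorta}.
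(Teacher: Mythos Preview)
The paper does not supply its own proof of this statement: it is quoted verbatim from \cite{naber-valtorta}*{Theorem 1.3 and 1.4} and used as a black-box input (see the sentence preceding the theorem and its immediate application in Corollary~\ref{rescaled quantitative estimates}). There is therefore nothing in the paper to compare your proposal against.

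That said, your outline is a fair high-level summary of the Naber--Valtorta scheme itself---density drop controlling $\beta$-numbers, a rigidity step, and the discrete Reifenberg theorem feeding a covering argument. One point is somewhat oversimplified: the dichotomy in your second paragraph is not quite that every $x\in\strata^k_{\varepsilon,r,1}(V)$ has $\beta_k^2(x,s)\geq c(\varepsilon)$ at \emph{every} scale $s\in[r,1]$. Rather, the mechanism is cone-splitting: if $V$ is almost-conical at $x$ at scale $s$ \emph{and} at several effectively $k$-independent nearby points at comparable scale, the approximating cone is forced to be $(k+1)$-symmetric. So at each scale either the density drops by a definite amount (these scales are few, since total drop is bounded), or the stratum points are effectively contained in a $k$-plane and the Reifenberg machinery applies. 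The actual covering argument in \cite{naber-valtorta} alternates between these two regimes rather than deriving a uniform pointwise $\beta$-lower bound as you wrote. This does not affect the overall shape of your sketch, but the inequality $\sum_j\beta_k^2(x,2^{-j})\geq c(\varepsilon)\log(1/r)$ as stated would, combined with the bounded total drop, force $\strata^k_{\varepsilon,r,1}(V)=\emptyset$ for $r$ small---which is of course false.
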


We remark that Theorem \ref{quantitative estimates bounded variation} has an immediate rescaled version, which we state in the special case of \textit{stationary} integral varifolds for sake of simplicity (as it is all we will need for our results):

\begin{corollary}\label{rescaled quantitative estimates}
    Let $n\in\mathbb{Z}_{\geq 2}$, $\Lambda\in (0,\infty)$, and $\varepsilon\in (0,1)$. Then, there exists a constant $C_\varepsilon = C_\varepsilon(n,\Lambda,\varepsilon)\in (0,\infty)$ such that the following is true: for any $R\in (0,1/2]$ and any stationary integral varifold $V\in \ivarifolds_n(\eball^{n+1}_2)$ with $\|V\|(\eball_2^{n+1}) \leq \Lambda$, we have
    $$\H^{n+1}\left(\eball_r(\strata^k_{\varepsilon,r,R}(V))\cap \eball_R(x)\right)\leq C_\varepsilon r^{n+1-k}R^k\ \ \ \ \text{for all }r\in (0,R]\text{ and }x\in\eball_1^{n+1}.$$
\end{corollary}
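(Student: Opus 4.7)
The plan is to reduce to Theorem \ref{quantitative estimates bounded variation} by a standard blow-up. Fix $R\in(0,1/2]$, $x\in\eball_1^{n+1}$, and $r\in(0,R]$, and set
$$W:=(\eta_{x,R})_{\#}V.$$
Since $|x|<1$ and $2R\leq 1$, one has $\eball_{2R}(x)\subset\eball_2^{n+1}(0)$, so $W$ is a well-defined stationary integral $n$-varifold on $\eball_2^{n+1}(0)$ (stationarity is preserved under homotheties).

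The first substantive step is to control $\|W\|(\eball_2^{n+1}(0))$ in a way that depends only on $n$ and $\Lambda$. By definition of the pushforward,
$$\|W\|(\eball_2^{n+1}(0))=R^{-n}\|V\|(\eball_{2R}(x)),$$
and since $\eball_1(x)\subset\eball_2^{n+1}(0)$, the monotonicity formula for stationary integral varifolds gives
$$(2R)^{-n}\|V\|(\eball_{2R}(x))\leq 1^{-n}\|V\|(\eball_1(x))\leq \Lambda,$$
so that $\|W\|(\eball_2^{n+1}(0))\leq 2^{n}\Lambda$. Thus $W$ satisfies the hypotheses of Theorem \ref{quantitative estimates bounded variation} (with mass bound $2^n\Lambda$ and $H=0$), yielding, with $r':=r/R\in(0,1]$,
$$\H^{n+1}\left(\eball_{r'}\big(\strata^k_{\varepsilon,r',1}(W)\big)\cap\eball_1\right)\leq C'_\varepsilon (r')^{n+1-k}$$
for some constant $C'_\varepsilon=C'_\varepsilon(n,\Lambda,\varepsilon)$.

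Finally, I transfer this estimate back to $V$. By Lemma \ref{rescaled strata} applied with $\rho=R$,
$$\eta_{x,R}\big(\strata^k_{\varepsilon,r,R}(V)\big)=\strata^k_{\varepsilon,r/R,1}(W),$$
and since $\eta_{x,R}$ is a homothety sending $\eball_R(x)$ onto $\eball_1(0)$ and mapping $r$-neighbourhoods of sets to $(r/R)$-neighbourhoods of their images, one gets
$$\eta_{x,R}\left(\eball_r\big(\strata^k_{\varepsilon,r,R}(V)\big)\cap\eball_R(x)\right)=\eball_{r/R}\big(\strata^k_{\varepsilon,r/R,1}(W)\big)\cap\eball_1(0).$$
Using the scaling $\H^{n+1}(\eta_{x,R}(A))=R^{-(n+1)}\H^{n+1}(A)$ and the previous estimate gives
$$\H^{n+1}\!\left(\eball_r\big(\strata^k_{\varepsilon,r,R}(V)\big)\cap\eball_R(x)\right)\leq R^{n+1}C'_\varepsilon(r/R)^{n+1-k}=C'_\varepsilon r^{n+1-k}R^k,$$
which is the desired bound with $C_\varepsilon:=C'_\varepsilon$. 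There is no real obstacle here; the only subtlety is ensuring via monotonicity that the rescaled mass stays controlled uniformly in $R$, so that one does not pick up an $R$-dependent constant when invoking Theorem \ref{quantitative estimates bounded variation}.
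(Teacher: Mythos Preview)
Your proof is correct and follows essentially the same approach as the paper: rescale by $\eta_{x,R}$, use monotonicity to bound $\|W\|(\eball_2^{n+1})\leq 2^n\Lambda$, invoke Theorem~\ref{quantitative estimates bounded variation} at scale $r/R$, and transfer back via Lemma~\ref{rescaled strata} and the $(n+1)$-homogeneity of $\H^{n+1}$. The order of steps and the presentation differ only cosmetically.
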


\begin{proof}
    Since $\eball^{n+1}_{2R}(x)\subset\eball_2^{n+1}$, $W:= (\eta_{x,R})_\#V$ is a stationary integral varifold in $\eball^{n+1}_2$, and moreover from the monotonicity formula we know that 
    $$\|W\|(\eball_2^{n+1}) \equiv R^{-n}\|V\|(\eball^{n+1}_{2R}(p))\leq 2^n\|V\|(B_1(x))\leq 2^n\Lambda.$$
    By Lemma \ref{rescaled strata} with $\rho = R$, we know that $\eta_{x,R}\left(\strata_{\varepsilon,r,R}(V)\right) = \strata^k_{\varepsilon,r/R,1}(W)$. Thus, if $C_\varepsilon = C_\varepsilon(n,2^n\Lambda,0,\varepsilon)\in (0,\infty)$ is the constant from Theorem \ref{quantitative estimates bounded variation} with these parameters, then we know from Theorem \ref{quantitative estimates bounded variation} that
    $$\H^{n+1}\left(\eball_r(\strata^k_{\varepsilon,\rho,1}(W))\cap\eball_1\right) \leq C_\varepsilon \rho^{n+1-k}\ \ \ \ \text{for all }\rho\in (0,1].$$
    Hence,
    \begin{align*}
        \H^{n+1}\left(\eball_r(\strata^k_{\varepsilon,r,R}(V))\cap\eball_R(x)\right) & = \H^{n+1}\left[\eball_r\left(\eta^{-1}_{x,R}(\strata^k_{\varepsilon,r/R,1}(W))\right)\cap \eball_R(x)\right]\\
        & = \H^{n+1}\left[\eta^{-1}_{x,R}\left(\eball_{r/R}(\strata^k_{\varepsilon,r/R,1}(W))\right)\cap \eball_R(x)\right]\\
        & = \H^{n+1}\left[\eta^{-1}_{x,R}\left(\eball_{r/R}(\strata^k_{\varepsilon,r/R,1}(W))\cap \eball_1\right)\right]\\
        & = R^{n+1}\H^{n+1}\left(\eball_{r/R}(\strata^k_{\varepsilon,r/R,1}(W))\cap \eball_1\right)\\
        & \leq R^{n+1}\cdot C_\varepsilon(r/R)^{n+1-k}\\
        & = C_\varepsilon r^{n+1-k}R^k
    \end{align*}
    as desired.
\end{proof}

Our first estimate toward proving Theorem \ref{thm:A} is independent of the Naber--Valtorta measure estimates of Theorem \ref{rescaled quantitative estimates} and doesn't require any assumption on the size of the singular set. It applies for tubular neighbourhoods about the points where the stability radius is at most the size of the tubular radius; morally, this is when the stability radius is ``small''. We will see that in fact we can get a much better bound on this set:

\begin{lemma}\label{low stability bound}
    Let $n\in\mathbb{Z}_{\geq 2}$. Then, there exists $C_0 = C_0(n)\in (0,\infty)$ such that for any stationary integral varifold $V\in \ivarifolds_n(\eball^{n+1}_2)$ with $\Index(\reg(V))<\infty$, we have
    $$\H^{n+1}\left(\eball_r(s_V^{-1}(0,r))\cap \eball_{1/2}\right) \leq C_0\folding(V)r^{n+1}\ \ \ \ \text{for all }r\in (0,1/2].$$
\end{lemma}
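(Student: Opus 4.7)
The plan is to combine a standard equal-radius Vitali-type covering with the folding number bound. First I would dispose of the trivial case: if $V$ is stable in $\eball_2^{n+1}$ then $s_V \equiv \infty$ (and hence $s_V^{-1}(0,r) = \emptyset$), so both sides vanish; thus I may assume $V$ is unstable, so in particular $\folding(V)\geq 1$. Set $A := s_V^{-1}(0,r)$; using $r\leq 1/2$, any $x\in A$ lying within distance $r$ of some point of $\eball_{1/2}$ must belong to $\eball_{1/2+r}\subset \eball_1$, so it suffices to work with $A\cap \eball_1$.

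Next I would select a maximal collection $\{x_i\}_{i\in I}\subset A\cap \eball_1$ for which $\{\eball_r(x_i)\}_{i\in I}$ are pairwise disjoint (equivalently, $|x_i-x_j|\geq 2r$ for $i\neq j$). By maximality, every $x\in A\cap \eball_1$ lies within distance $2r$ of some $x_i$, so a triangle inequality yields
\[
\eball_r(A)\cap \eball_{1/2}\subset \bigcup_{i\in I}\eball_{3r}(x_i),
\]
whence $\H^{n+1}(\eball_r(A)\cap \eball_{1/2})\leq |I|\cdot \omega_{n+1}(3r)^{n+1}$.

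The key step is to bound $|I|\leq \folding(V)$. Since $x_i\in \eball_1$ and $r\leq 1/2$, each $\eball_r(x_i)$ is an open subset of $\eball_2^{n+1}$. Moreover, the condition $s_V(x_i)<r$ together with the definition of $s_V(x_i)$ as the supremum of radii yielding vanishing index forces $\index(\reg(V); \eball_r(x_i))\geq 1$ (since any $\rho$ with $\index = 0$ satisfies $\rho \leq s_V(x_i) < r$). Thus $\{\eball_r(x_i)\}_{i\in I}$ is a pairwise disjoint collection of unstable open subsets of $\eball_2^{n+1}$, and Definition~\ref{defn:folding} gives $|I|\leq \folding(V)$. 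The bound then follows with $C_0 = 3^{n+1}\omega_{n+1}$.

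I don't expect a significant obstacle here. The only subtle point is verifying that each chosen ball $\eball_r(x_i)$ is actually an admissible open subset of $\eball_2^{n+1}$ on which the index is at least one; this is immediate from the restriction $r\leq 1/2$, the inclusion $A\cap\eball_1$ used in the Vitali selection, and the supremum definition of $s_V$.
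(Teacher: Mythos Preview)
Your proof is correct, and in fact more elementary than the paper's. The paper covers $A = s_V^{-1}(0,r)\cap \eball_1$ by the Besicovitch family $\{\overline{\eball}_{s_V(a)}(a):a\in A\}$ of \emph{variable} radii, extracts $N\leq N_0(n)$ disjoint subcollections $A_1,\dotsc,A_N$, and shows each $|A_j|\leq \folding(V)$ by slightly enlarging the closed balls to open ones $\eball_{s_V(a)+\delta}(a)$ to witness instability; this yields $C_0 = 2^{n+1}N_0\omega_{n+1}$ via the Besicovitch constant. You instead pick a maximal $2r$-separated set at the \emph{fixed} scale $r$, so that the balls $\eball_r(x_i)$ are already disjoint and, since $s_V(x_i)<r$, already unstable --- no enlargement step and no Besicovitch theorem are needed, and you get the explicit constant $C_0 = 3^{n+1}\omega_{n+1}$. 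The paper's variable-radius Besicovitch template is not wasted effort, however: essentially the same argument (with radii $s_V(x)$ and then $\tfrac{1}{2}\gamma s_V(x)$) is what drives the subsequent covering Lemma~\ref{covering lemma}, where the fixed-radius trick no longer suffices because one genuinely needs balls adapted to the stability radius.
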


\begin{proof}
Set $A:= s_V^{-1}(0,r)\cap \eball^{n+1}_1$. Then as $\mathcal{A}:= \{\overline{\eball}_{s_V(a)}(a): a\in A\}$ is a Besicovitch cover of $A$, the Besicovitch covering theorem gives the existence of a constant $N_0 = N_0(n)$ and subcollections $A_1,\dotsc,A_N\subset A$, where $N\leq N_0$, for which each individual collection $\mathcal{A}_j:= \{\overline{\eball}_{s_V(a)}:a\in A_j\}$ consists of pairwise disjoint balls, and moreover we still have
$$A\subset\bigcup^N_{j=1}\bigcup_{a\in A_j}\overline{\eball}_{s_V(a)}(a).$$
We now claim that $|A_j|\leq \folding(V)$ for each $j=1,\dotsc,N$ (recall that $\folding(V)\leq \Index(\reg(V);\eball_2^{n+1})$, so in particular $\folding(V)<\infty$ by assumption). Indeed, if this failed, we would be able to find distinct $a_1,\dotsc,a_{\folding(V)+1}$ in some $A_j$. But then, as $\overline{B}_{s_V(a_i)}(a_i)$ are pairwise disjoint for $i=1,\dotsc,\folding(V)+1$, we can find $\delta>0$ for which $\eball_{s_V(a_i)+\delta}(a_i)$ are pairwise disjoint for $i=1,\dotsc,\folding(V)+1$. But by definition of $s_V$, $V$ is unstable in each $\eball_{s_V(a_i)+\delta}(a_i)$, and so this would imply that $\folding(V)\geq \folding(V)+1$, which is clearly a contradiction as $\folding(V)<\infty$ (by Lemma \ref{bounded index disjoint sets}). Thus, $|A_j|\leq \folding(V)$ for each $j=1,\dotsc,N$.

Now, as $s_V(a)<r$ for all $a\in A$, we then have for any $\varepsilon>0$,
$$\eball_r(A)\subset\bigcup^N_{j=1}\bigcup_{a\in A_j}\eball_{2r+\varepsilon}(a)$$
and therefore:
$$\H^{n+1}(\eball_r(A))\leq N\cdot\folding(V)\cdot\omega_{n+1}(2r+\varepsilon)^{n+1}.$$
Hence the result follows, with $C_0 := 2^{n+1}N\omega_{n+1}$, by taking $\varepsilon\downarrow 0$ and then noting that $\eball_r(s_V^{-1}(0,r))\cap \eball_{1/2}\subset \eball_r(A)$ for $r\in (0,1/2]$.
\end{proof}

The next lemma is the key covering argument we need to prove Theorem \ref{thm:A}. The idea is that control on the folding number provides control on the size of a covering by smaller stable balls.

\begin{lemma}\label{covering lemma}
    Let $n\in \mathbb{Z}_{\geq 2}$. Then, there exists a constant $C_1 = C_1(n)\in (0,\infty)$ such that the following holds: if $V\in\ivarifolds_n(\eball^{n+1}_2)$ is a stationary integral varifold with $\folding(V)<\infty$, then for any $0<a<b<\infty$, $\gamma\in (0,1)$, and subset $A\subset s_V^{-1}([a,b])\cap \eball_1$, there exists a finite set $B\subset A$ such that
    $$A\subset\bigcup_{y\in B}\eball^{n+1}_{\gamma s_V(y)}(y)$$
    and moreover we have the size bound
    $$|B|\leq C_1\folding(V)\cdot\left(\frac{b}{a}\right)^{n+1}\left(1+\gamma^{-1}\right)^{n+1}.$$
\end{lemma}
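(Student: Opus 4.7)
The plan is a two-step covering argument that combines the Besicovitch covering theorem, applied to the \emph{full} stability balls $\bar{\eball}_{s_V(y)}(y)$ in order to harvest the folding number, with a Vitali-type refinement inside each such ball to produce the correctly-sized balls $\eball_{\gamma s_V(z)}(z)$.

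First, I would apply the Besicovitch covering theorem to the family $\{\bar{\eball}_{s_V(y)}(y) : y \in A\}$, whose radii are bounded above by $b$. This produces a dimensional constant $N_0 = N_0(n)$ and subcollections $A_1, \dots, A_{N_0} \subset A$ such that, for each $j$, the closed balls $\{\bar{\eball}_{s_V(y)}(y) : y \in A_j\}$ are pairwise disjoint and $A \subset \bigcup_{j=1}^{N_0}\bigcup_{y \in A_j} \bar{\eball}_{s_V(y)}(y)$. Repeating the argument in the proof of Lemma \ref{low stability bound} verbatim, any $\folding(V) + 1$ of these disjoint closed balls can be slightly enlarged to disjoint open balls on each of which $V$ is unstable, and so Lemma \ref{bounded index disjoint sets} forces $|A_j| \leq \folding(V)$ for every $j$.

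Second, I would refine each Besicovitch ball: for each $y \in \bigcup_j A_j$, choose a maximal subset $C_y \subset A \cap \bar{\eball}_{s_V(y)}(y)$ subject to the constraint that the balls $\{\eball_{\gamma a/2}(z) : z \in C_y\}$ are pairwise disjoint. Maximality gives that every $x \in A \cap \bar{\eball}_{s_V(y)}(y)$ satisfies $|x-z| < \gamma a \leq \gamma s_V(z)$ for some $z \in C_y$, so that $x \in \eball_{\gamma s_V(z)}(z)$. Since these disjoint balls $\eball_{\gamma a/2}(z)$ all sit inside $\eball_{s_V(y) + \gamma a/2}(y) \subset \eball_{b + \gamma a/2}(y)$, a simple volume comparison yields $|C_y| \leq \bigl((2b + \gamma a)/(\gamma a)\bigr)^{n+1}$; using $b \geq a$ and $\gamma \leq 1$, this is bounded by $C(n)(b/a)^{n+1}(1 + \gamma^{-1})^{n+1}$ for some dimensional constant $C(n)$. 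Setting $B := \bigcup_j \bigcup_{y \in A_j} C_y$ then delivers both the required covering and the estimate $|B| \leq N_0 \cdot \folding(V) \cdot C(n)(b/a)^{n+1}(1+\gamma^{-1})^{n+1}$, which is of the desired form.

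I expect the main subtlety is recognising that one cannot simply apply Vitali directly to the balls $\eball_{\gamma s_V(y)}(y)$, since those balls have no direct link to the instability structure and would produce a purely volumetric bound without any $\folding(V)$ factor; the two scales $s_V(y)$ and $\gamma s_V(y)$ must therefore be treated separately, and the factor $(b/a)^{n+1}$ is precisely the cost of transitioning from the coarse Besicovitch scale down to the fine Vitali scale.
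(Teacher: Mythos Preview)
Your proof is correct. Both your argument and the paper's share the same first step: apply Besicovitch to the full stability balls $\bar{\eball}_{s_V(y)}(y)$ and use the folding number to bound each subcollection by $\folding(V)$. The second step differs. The paper applies Besicovitch a \emph{second} time, now to the family $\{\bar{\eball}_{\frac{1}{2}\gamma s_V(x)}(x):x\in A\}$, and then bounds the cardinality of each resulting subcollection by observing that the disjoint small balls all lie inside the tube $\eball_{\gamma b}(A)$, whose volume was just controlled via the first Besicovitch step (yielding the intermediate estimate $\H^{n+1}(\eball_r(A))\leq C\folding(V)(b+r)^{n+1}$). Your approach instead stays inside each coarse Besicovitch ball and runs a Vitali-type maximal packing at the fixed scale $\gamma a/2$, bounding $|C_y|$ by a direct volume comparison inside a single enlarged ball $\eball_{b+\gamma a/2}(y)$. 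Your route is slightly more elementary---it avoids the second invocation of Besicovitch and the intermediate tubular-neighbourhood volume bound---while the paper's route has the minor advantage of isolating that tubular bound as a reusable inequality. Both yield the same form of the estimate with dimensional constants.
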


\begin{proof}
We first prove that for such $A$ we have $\H^{n+1}(B_r(A))\leq C\folding(V)\omega_{n+1}(b+r)^{n+1}$ for some $C = C(n)$; this will follow in much the same way it did in Lemma \ref{low stability bound}. Indeed, consider the Besicovitch covering of $A$ given by $\mathcal{A}:= \{\overline{\eball}_{s_V(x)}(x):x\in A\}$. The Besicovitch covering theorem then gives the existence of a constant $N_0 = N_0(n)$ and (countable) subcollections $A_1,\dotsc,A_N\subset A$, where $N\leq N_0$ and, for each $i=1,\dotsc,N$, $\{\overline{\eball}_{s_V(x)}(x):x\in A_i\}$ is formed of pairwise disjoint balls, and moreover
$$A\subset\bigcup^N_{i=1}\bigcup_{x\in A_i}\overline{\eball}_{s_V(x)}(x).$$
We then claim that in fact $|A_i|\leq \folding(V)$ for each $i=1,\dotsc,N$. Indeed, if not, then we can find $i$ (which without loss of generality we can assume to be $i=1$) such that $|A_1|\geq \folding(V)+1$. In particular, we can find points $x_1,\dotsc,x_{\folding(V)+1}\in A_1$, and for these points we can then find a $\delta>0$ for which $\eball_{s_V(x_1)+\delta}(x_1),\dotsc,\eball_{s_V(x_{\folding(V)+1})+\delta}(x_{\folding(V)+1})$ are disjoint. As $\reg(V)$ is unstable in each $\eball_{s_V(x_j)+\delta}(x_j)$ for $j=1,\dotsc,\folding(V)+1$, this contradicts the definition of $\folding(V)$ (as again, by assumption and Lemma \ref{bounded index disjoint sets} we know $\folding(V)<\infty$); hence this shows that $|A_i|\leq \folding(V)$ for each $i=1,\dotsc,N$. Hence, as we have that for any $r>0$ 
$$\eball_{r}(A)\subset\bigcup_{i=1}^N\bigcup_{x\in A_i}\overline{\eball}_{s_V(x)+r}(x),$$
this immediately gives
\begin{equation}\label{E:cover-1}
\H^{n+1}(\eball_r(A)) \leq N\cdot \folding(V)\cdot \omega_{n+1}(b+r)^{n+1}
\end{equation}
as here we have $s_V(x)\leq b$ for all $x\in A$.

We now use \eqref{E:cover-1} to prove the lemma. Consider a new Besicovitch cover of $A$ given by $\mathcal{B}:= \{\overline{\eball}_{\frac{1}{2}\gamma s_V(x)}(x): x\in A\}$. Applying the Besicovitch covering theorem again, we again get the existence of $M\leq N_0$ and (countable) subcollections $B_1,\dotsc,B_M \subset A$ where, for $i=1,\dotsc,M$, $\{\overline{B}_{\frac{1}{2}\gamma s_V(x)}(x): x\in B_i\}$ consists of pairwise disjoint balls, and moreover
\begin{equation}\label{E:cover-3}
A\subset\bigcup^M_{i=1}\bigcup_{x\in B_i}\overline{\eball}_{\frac{1}{2}\gamma s_V(x)}(x).
\end{equation}
We now claim that each $B_i$ is a finite set, and moreover for each $i=1,\dotsc,M$ we have
\begin{equation}\label{E:cover-2}
|B_i|\leq C\folding(V)\left(\frac{b}{a}\right)^{n+1}(1+\gamma^{-1})^{n+1}.
\end{equation}
To see this, consider any finite subset $\{x_1,\dotsc,x_L\}\subset B_i$ (for some $i=1,\dotsc,M$). Then, clearly have
$$\bigcup^L_{j=1}\eball_{\frac{1}{2}\gamma s_V(x_j)}(x_j)\subset\bigcup^L_{j=1}\eball_{\frac{1}{2}\gamma b}(x_j)\subset \eball_{\gamma b}(A).$$
Thus, as the left-hand side consists of pairwise disjoint balls, and thus from \eqref{E:cover-1} with $r= b\gamma$ gives
$$L\omega_{n+1} \left(\frac{1}{2}\gamma a\right)^{n+1} \leq C\folding(V)\omega_{n+1}(b + b\gamma)^{n+1}$$
i.e. $L \leq C\folding(V)\left(\frac{b}{a}\right)^{n+1}(1+\gamma^{-1})^{n+1}$, where $C = C(n)$; this proves \eqref{E:cover-2}. Thus, as $\overline{\eball}_{\frac{1}{2}\gamma s_V(x)}(x)\subset \eball_{\gamma s_V(x)}(x)$, the claim follows from \eqref{E:cover-3} by taking $B:= \cup_{i=1}^M B_i$.
\end{proof}

We can now prove Theorem \ref{thm:A}. Since we know $V\in \salpha$ have that $\sing(V)$ is countably $(n-7)$-rectifiable (see the remark following Definition \ref{s-alpha condition}), all that remains is to prove the following:

\begin{theorem}\label{thm:A-2}
    Let $n\in \mathbb{Z}_{\geq 8}$, $\Lambda>0$. Then, there exists $C_0 = C_0(n,\Lambda)\in (0,\infty)$ such that for any $V\in \salpha$ with $\|V\|(\eball^{n+1}_2)\leq \Lambda$, we have for all $r\in (0,1/2]$
    $$\H^{n+1}\left(\eball_{r/8}(\sing(V))\cap \eball_{1/2}\right)\leq C_0(1+\folding(V))r^8$$
    $$\|V\|\left(\eball_{r/8}(\sing(V))\cap \eball_{1/2}\right)\leq C_0(1+\folding(V))r^7.$$
\end{theorem}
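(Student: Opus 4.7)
The plan is to decompose $\sing(V)\cap\overline{\eball_{5/8}}$ (which contains $\sing(V)\cap\eball_{1/2+r/8}$) according to the size of the stability radius $s_V$, bound the $r/8$-neighborhood of each piece separately, and then derive the mass bound from the $\H^{n+1}$ bound by a standard Vitali covering argument together with the monotonicity formula. For the low-stability points $\{s_V<r\}\cap\sing(V)$, the $r/8$-neighborhood lies inside $\eball_r(s_V^{-1}(0,r))$, so Lemma \ref{low stability bound} directly yields $\H^{n+1}\leq C\folding(V)r^{n+1}\leq C\folding(V)r^{8}$ using $n\geq 7$.

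The heart of the argument is the high-stability part $\{s_V\geq r\}$. I would first establish the $\varepsilon$-regularity observation that every $x\in\sing(V)$ with $s_V(x)\geq 2s$ fails to be $(\varepsilon_0,s,n-6)$-conical at $x$: indeed $\regscale_V^{Q_0}(x)=0$ by Lemma \ref{regular implies positive regscale}, so the contrapositive of Theorem \ref{epsilon regularity} applied at scale $\rho=2s$ forbids such conicality. Ranging over admissible $s$ yields the pointwise containment
$$\sing(V)\cap\overline{\eball_{5/8}}\cap\{s_V\geq r\}\subset\strata^{n-7}_{\varepsilon_0,r/8,\min(s_V(x)/2,1/2)}(V).$$
Next, I would dyadically decompose $\{s_V\geq r\}\cap\sing(V)\cap\overline{\eball_{5/8}}$ into shells $S_j:=\{2^jr\leq s_V<2^{j+1}r\}\cap\sing(V)\cap\overline{\eball_{5/8}}$. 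For each shell with $2^{j+1}r\lesssim 1$, Lemma \ref{covering lemma} with $a=2^jr$, $b=2^{j+1}r$ and a small fixed $\gamma$ (e.g.~$\tfrac{1}{8}$) yields a cover of $S_j$ by balls $\eball_{\gamma s_V(y)}(y)$ indexed by a set $B_j$ of size $|B_j|\leq C\folding(V)$, with the count uniform in $j$ thanks to the ratio $b/a=2$. Using the Lipschitz property of $s_V$ (Lemma \ref{stability radius continuity}), the above observation upgrades on each such ball to $\sing(V)\cap\eball_{\gamma s_V(y)}(y)\subset\strata^{n-7}_{\varepsilon_0,r/8,R_y}(V)$ with $R_y\asymp s_V(y)\leq 1/2$, so Corollary \ref{rescaled quantitative estimates} with $\rho=r/8$, $k=n-7$, $R=R_y$ bounds the contribution by $\leq Cr^{8}(2^{j+1}r)^{n-7}$ per ball. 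Summing over $y\in B_j$ produces $C\folding(V)r^{8}(2^{j+1}r)^{n-7}$ per shell, and the resulting sum over $j$ is dominated by its largest dyadic scale (where $2^{j+1}r\asymp 1$), yielding a total of $\leq C\folding(V)r^{8}$. The remaining tail of shells (where $s_V$ exceeds a fixed threshold $\asymp 1$) all lie inside the single set $\strata^{n-7}_{\varepsilon_0,r/8,1/2}(V)$ and contribute $\leq Cr^{8}$ by a direct application of Corollary \ref{rescaled quantitative estimates} with $R=1/2$.

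The hard part is ensuring the dyadic sum has exactly the right exponents: the scale-invariance of the per-shell count $|B_j|\leq C\folding(V)$ from Lemma \ref{covering lemma} (arising from the ratio $b/a=2$), together with the hypothesis $n\geq 8$ (so $(2^{j+1}r)^{n-7}$ has strictly positive exponent), makes the series in $j$ dominated by its largest term, producing the final cancellation $r^{8+(n-7)}\cdot r^{-(n-7)}=r^{8}$. Verifying the compatibility conditions ($R_y\leq 1/2$ throughout the dyadic range, and $\eball_{r/8}(\eball_{\gamma s_V(y)}(y))\subset\eball_{R_y}(y)$ so that Corollary \ref{rescaled quantitative estimates} captures the full $r/8$-neighborhood) is routine by a suitable choice of $\gamma$ depending only on $n$. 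Finally, given the $\H^{n+1}$ bound, the mass bound follows by extracting a maximal $r/8$-separated subset of $\eball_{r/8}(\sing(V))\cap\eball_{1/2}\cap\spt\|V\|$: the $\H^{n+1}$ estimate (applied with $r$ slightly enlarged) controls the number of such points by $\leq C(1+\folding(V))r^{7-n}$, and monotonicity gives $\|V\|\leq Cr^{n}$ on each $\eball_{r/8}$-ball, producing the $r^{7}$ bound.
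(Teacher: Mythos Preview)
Your proposal is correct and follows essentially the same route as the paper: the three-way split of $\sing(V)$ by the stability radius (low $s_V<r$ via Lemma \ref{low stability bound}, a dyadic decomposition of the intermediate range handled with Lemma \ref{covering lemma} plus Corollary \ref{corollary of sheeting theorem} and Corollary \ref{rescaled quantitative estimates}, and the tail $s_V\gtrsim 1$ by a single application of Corollary \ref{rescaled quantitative estimates}), followed by a packing-plus-monotonicity argument for the $\|V\|$ bound. The only cosmetic differences are that the paper indexes the dyadic shells downward from scale $1$ (i.e.\ $s_V\in[2^{-j},2^{-j+1}]$) so the geometric series converges directly, whereas you index upward from $r$ and observe the sum is dominated by its top term; these are equivalent.
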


\begin{proof}
For $r\in (0,1/2]$. Observe that $\eball_{r/8}(\sing(V))\cap \eball_{1/2}\subset \eball_{r/8}(\sing(V)\cap \eball_1)$. Write
$$\sing(V) = \left(s_V^{-1}(0,r)\cup s_V^{-1}([r,1])\cup s_V^{-1}((1,\infty])\right)\cap \sing(V)$$
and so
\begin{align*}
\H^{n+1}\left(\eball_{r/8}(\sing(V))\cap \eball_{1/2}\right) & \leq \H^{n+1}\left(\eball_{r/8}(s_V^{-1}(0,r)\cap \sing(V))\cap \eball_{1/2}\right)\\
& + \H^{n+1}\left(\eball_{r/8}(s_V^{-1}([r,1])\cap \sing(V)\cap \eball_{1})\right)\\
& + \H^{n+1}\left(\eball_{r/8}(s_V^{-1}((1,\infty])\cap \sing(V))\cap \eball_{1/2}\right)
\end{align*}
We will bound each term individually. Note firstly that Lemma \ref{low stability bound} gives
$$\H^{n+1}\left(\eball_{r/8}(s_V^{-1}(0,r)\cap\sing(V))\cap \eball_{1/2}\right) \leq C_0\folding(V)r^{n+1}$$
where $C_0 = C_0(n)\in (0,\infty)$; this deals with the first term. 

For the third term, note that if $x\in s_V^{-1}((1,\infty])\cap \sing(V)\cap \eball_1$, then in particular $x\in \sing(V)$, and so Lemma \ref{regular implies positive regscale} gives $\regscale^Q_V(x)=0$ for some positive integer $Q$. Hence, if we take $K = \overline{s_V^{-1}((1,\infty])\cap \eball_1}$ in Corollary \ref{corollary of sheeting theorem}, it gives that
$$s_V^{-1}((1,\infty])\cap \sing(V)\cap \eball_1 \subset \strata^{n-7}_{\varepsilon_0,r,1/2}(V)$$
for each such $r$, where $\varepsilon_0 = \varepsilon_0(n,\Lambda)$. Hence,
\begin{align*}
\H^{n+1}\left(\eball_{r/8}(s_V^{-1}((1,\infty])\cap\sing(V))\cap \eball_{1/2}\right) & \leq \H^{n+1}(\eball_{r/8}(\strata^{n-7}_{\varepsilon_0,r,1/2})\cap \eball_{1/2})\\
& \leq C_{\varepsilon_0}r^{8}(1/2)^{n-7}
\end{align*}
where $C_{\varepsilon_0} = C_{\varepsilon_0}(n,\Lambda,\varepsilon_0)$ only depends on $n$ and $\Lambda$. Hence, all that remains to bound is the second term above, where the stability radius is in $[r,1]$.

Set $k_0:= \min\{k\in\mathbb{Z}_{\geq 1}:2^{-k}\leq r\}$. Then for $j=1,\dotsc,k_0$, set
$$A_j:= s_V^{-1}([2^{-j},2^{-j+1}])\cap\sing(V)\cap \eball_1.$$
Then note that
$$s_V^{-1}([r,1])\cap\sing(V)\cap\eball_1\subset A_1\cup\cdots \cup A_{k_0}.$$
Now, applying Lemma \ref{covering lemma} with $a = 2^{-j}$, $b=2^{-j+1}$, $\gamma = 1/8$, we obtain for each $j=1,\dotsc,k_0$ a set $B_j\subset A_j$ with $|B_j|\leq 16^{n+1}C_1\folding(V)$, where $C_1 = C_1(n)\in (0,\infty)$, and
$$A_j\subset \bigcup_{y\in B_j}\eball_{s_V(y)/8}(y).$$
Observe that, by continuity of the stability radius (Lemma \ref{stability radius continuity}), we know that $s_V\geq 2^{-j}$ on $\overline{A}_j$, and thus for each $x\in \overline{A}_j$ we know that $\reg(V)$ is stable in $\eball_{2^{-j}}(x)$. Hence, using Lemma \ref{regular implies positive regscale} and Corollary \ref{corollary of sheeting theorem} (with $K = \overline{A}_j$, which has $d(K,\del \eball^{n+1}_2)\geq 1$, $\sigma = 5r/8$) we have that
$$A_j\subset \strata^{n-7}_{\varepsilon_0,r/8,2^{-j-1}}(V)$$
where here $\varepsilon_0 = \varepsilon_0(n,\Lambda)$ (note that $r/8 < (1/8)2^{-k_0+1} = 2^{-k_0-2}<2^{-j-1}$ for any $j=1,\dotsc,k_0$). Thus, we in particular have
$$\eball_{r/8}(A_j)\subset\bigcup_{y\in B_j}\eball_{r/8}\left(\strata^{n-7}_{\varepsilon_0,r/8,2^{-j-1}}(V)\right)\cap \eball_{s_V(y)/8 + r/8}(y)$$
and as $s_V(y)\leq 2^{-j+1}$ in $A_j$,
$$\eball_{r/8}(A_j)\subset\bigcup_{y\in B_j}\eball_{r/8}\left(\strata^{n-7}_{\varepsilon_0,r/8,2^{-j-1}}(V)\right)\cap \eball_{2^{-j-1}}(y).$$
Hence, by Corollary \ref{rescaled quantitative estimates}, we can find a constant $C^\prime = C^\prime(n,\Lambda)$ (for this choice of $\varepsilon_0 = \varepsilon_0(n,\Lambda)$) such that for each $y\in B_j$ we have
$$\H^{n+1}\left(\eball_{r/8}\left(\strata^{n-7}_{\varepsilon_0,r/8,2^{-j-1}}(V)\right)\cap \eball_{2^{-j-1}}(y)\right) \leq C^\prime\cdot\left(r/8\right)^8\cdot (2^{-j-1})^{-(n-7)}.$$
Hence, we have by combining
$$\H^{n+1}(\eball_{r/8}(A_j)) \leq |B_j|\cdot C^\prime\cdot (r/8)^8 \cdot (2^{-j-1})^{-(n-7)} \leq C_* \folding(V)r^8\cdot \left(2^{-n+7}\right)^{-j-1}$$
and this is true for each $j=1,\dotsc,k_0$; here $C_* = C_*(n,\Lambda)$. Therefore,
$$\H^{n+1}\left(\eball_{r/8}(A_1\cup\cdots\cup A_{k_0})\right) \leq C_*\folding(V)r^8\cdot\sum^{k_0}_{j=1}\left(\frac{1}{2^{n-7}}\right)^{-j-1}.$$
Since $n\geq 8$, the sum on the right-hand side is bounded above by the finite number $\sum^\infty_{j=0} \left(\frac{1}{2^{n-7}}\right)^{-j}$, which only depends on $n$; this completes the bound of the second term. So combining, we have
$$\H^{n+1}\left(\eball_{r/8}(\sing(V))\cap \eball_{1/2}\right) \leq C_0\folding(V) r^{n+1} + C^*_1\folding(V)r^8 + C_2 r^{8} \leq \tilde{C}(1+\folding(V))r^8$$
for some constants $C_0,C^*_1,C_2,\tilde{C}$ only depending on $n$ and $\Lambda$; here we have used that $r^{n+1}<r^8$; this therefore completes the proof of the first claimed bound.

To prove the second inequality, we shall use the first to prove a packing estimate. Indeed, we claim the following: for any $r\in (0,1/4]$, we can find a covering of $\sing(V)\cap B_{1/4}$ by balls $\{B_r(x_i)\}_{i=1}^N$ with $x_i\in\sing(V)$ and $N\leq C_3r^{-(n-7)}$, where $C_3 = C_3(n,\Lambda)$. Indeed, to see this simply choose $x_i\in \sing(V)\cap B_{1/4}$ a maximal collection of points such that $\{B_{r/2}(x_i)\}_{i}$ is a pairwise disjoint collection. Then, by construction we have $\sing(V)\cap B_{1/4} \subset \bigcup_{i=1}^N B_r(x_i)$, and
\begin{align*}
N r^{n-7} = \sum_i r^{n-7} & \leq \omega_{n+1}^{-1}2^{n+1}\cdot r^{-8}\sum_i \H^{n+1}(B_{r/2}(x_i))\\
&\leq \omega_{n+1}^{-1}2^{n+1}\cdot r^{-8} \H^{n+1}(B_{r/2}(\sing(V)\cap B_{1/4}))\\
& \leq \omega_{n+1}^{-1}2^{n+1}\cdot r^{-8}\cdot C(n,\Lambda)r^{8}\\
& \equiv C_3
\end{align*}
where $C_3 = \omega^{-1}_{n+1}2^{n+1}C$, where $C = C(n,\Lambda)$ is the constant from the first inequality we have already established; this proves the packing estimate claimed. Thus, using this cover we now have
\begin{align*}
\|V\|(B_{r/8}(\sing(V))\cap B_{1/8}) & \leq \|V\|(B_{r}(\sing(V)\cap B_{1/4}))\\
&\leq \|V\|\left(\bigcup_{i=1}^NB_{2r}(x_i)\right) \leq \sum_{i=1}^N\|V\|(B_{2r}(x_i))\\
& \hspace{8.5em}\leq \sum_{i=1}^N (2r)^n\|V\|(B_1(x_i)) \leq 2^n\Lambda r^n N \leq 2^n\Lambda C_3 r^7
\end{align*}
where in the fourth inequality we have used the monotonicity formula for stationary integral varifolds to get that, as $x_i\in \sing(V)\cap B_{1/4}$ and $0<2r<1$, $(2r)^{-n}\|V\|(B_{2r}(x_i)) \leq \|V\|(B_1(x_i))\ (\leq \Lambda)$. Covering $B_{1/2}$ by at most $C(n)$ balls of radius $1/8$, repeating the same argument in each ball, and summing the estimates completes the proof of the second estimate.
\end{proof}

We also remark the following theorem, which generalises \cite{naber-valtorta}*{Theorem 1.8}:

\begin{theorem}
Let $n\in\mathbb{Z}_{\geq 8}$ and $\Lambda>0$. Then, there exists a constant $C_0 = C_0(n,\Lambda)\in (0,\infty)$ such that for any $V\in \salpha$ and $r\in (0,1/2]$,
$$\H^{n+1}(\{x\in\spt\|V\|: |A_V(x)|>r^{-1}\}\cap \eball_{1/2})\leq C_0(1+\folding(V))r^8;$$
$$\H^{n+1}(\badreg_{r}(V)\cap \eball_{1/2}) \leq C_0(1+\folding(V))r^8;$$
$$\|V\|(\badreg_r(V)\cap \eball_{1/2}) \leq C_0(1+\folding(V))r^7.$$
\end{theorem}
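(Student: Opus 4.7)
The plan is to adapt the proof of Theorem \ref{thm:A-2} essentially verbatim, replacing the singular set by the bad regularity set. First, observe the immediate containment $\{x\in\spt\|V\|:|A_V(x)|>r^{-1}\}\subset\badreg_r(V)$: if $\regscale^Q_V(x)=\rho>0$ for some $Q$, then Definition \ref{regularity scale} forces $\rho|A_V(x)|\leq 1$, so $\rho\leq r$ whenever $|A_V(x)|>r^{-1}$, placing $x$ in $\badreg^Q_r(V)$. Thus the first inequality reduces to the second.

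For the $\H^{n+1}$-bound on $\badreg_r(V)\cap \eball_{1/2}$, I would partition $\badreg_r(V)\cap\eball_1$ via the stability radius $s_V$ into three pieces exactly as in the proof of Theorem \ref{thm:A-2}: $s_V\in(0,r)$, $s_V\in[r,1]$, and $s_V\in(1,\infty]$. The first piece lies in $s_V^{-1}(0,r)\cap\eball_1$ and is handled directly by Lemma \ref{low stability bound}, giving a $C\folding(V)r^{n+1}\leq C\folding(V)r^8$ contribution since $n\geq 8$. The third piece sits in the compact set $K=\overline{s_V^{-1}((1,\infty])\cap\eball_1}$, on which $V$ is locally stable; Corollary \ref{corollary of sheeting theorem} yields $\badreg^{Q_0}_r(V)\cap K\subset \strata^{n-7}_{\varepsilon_0,5r/2,1/2}(V)$, and then Corollary \ref{rescaled quantitative estimates} bounds the $r/8$-neighborhood of this stratum by $Cr^8$. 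The middle piece $s_V\in[r,1]$ is the main business: I would decompose dyadically into $A_j:=s_V^{-1}([2^{-j},2^{-j+1}])\cap\eball_1$ for $j=1,\ldots,k_0$ with $2^{-k_0}\leq r$, cover each $A_j$ by at most $16^{n+1}C_1\folding(V)$ balls $\eball_{2^{-j}/8}(y)$ via Lemma \ref{covering lemma} (with $a=2^{-j}$, $b=2^{-j+1}$, $\gamma=1/8$), and on each such ball apply Corollary \ref{corollary of sheeting theorem} with $K=\overline{A}_j$ followed by Corollary \ref{rescaled quantitative estimates} at scale $R=2^{-j-1}$, bounding the contribution by $Cr^8(2^{-j-1})^{n-7}$. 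Multiplying by the number of covering balls and summing over $j$ produces a convergent geometric series precisely because $n\geq 8$, yielding the desired $C(1+\folding(V))r^8$ bound.

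The $\|V\|$-estimate follows by the packing argument used at the end of the proof of Theorem \ref{thm:A-2}. The argument above in fact bounds the $r/8$-neighborhood of $\badreg_r(V)$ inside $\eball_{1/2}$ by the same $C(1+\folding(V))r^8$, since each piece of the partition was controlled via such a neighborhood. A maximal $r$-separation argument then extracts a cover of $\badreg_r(V)\cap\eball_{1/4}$ by $N\leq C(1+\folding(V))r^{-(n-7)}$ balls of radius $r$ centered on $\badreg_r(V)$; the monotonicity formula gives $\|V\|(\eball_{2r}(x_i))\leq 2^n\Lambda r^n$ per ball, and multiplication followed by a trivial $O(1)$-ball cover to pass from $\eball_{1/4}$ to $\eball_{1/2}$ yields the $C(1+\folding(V))r^7$ bound. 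The main obstacle, which is really just bookkeeping, is ensuring that the strata containment of Corollary \ref{corollary of sheeting theorem} and the Naber--Valtorta estimate apply at mutually consistent scales inside each dyadic annulus; no new conceptual idea is required beyond those already present in Theorem \ref{thm:A-2}, and the key role of $n\geq 8$ is still the same, namely to make the geometric sum converge.
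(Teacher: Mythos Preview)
Your proposal is correct and follows essentially the same approach as the paper's own proof, which is very terse and simply states that the second estimate follows from Corollary~\ref{corollary of sheeting theorem} and Corollary~\ref{rescaled quantitative estimates} ``in much the same way'' as Theorem~\ref{thm:A-2}, that the first follows from the second via the containment $\{|A_V|>r^{-1}\}\subset\badreg_r(V)$, and that the third follows from the second via the packing argument at the end of Theorem~\ref{thm:A-2}. You have supplied exactly the details the paper leaves implicit, including the three-piece partition by stability radius, the dyadic decomposition with Lemma~\ref{covering lemma}, and the convergent geometric sum requiring $n\geq 8$.
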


\begin{proof}
    The second estimate follows from Corollary \ref{corollary of sheeting theorem} and Corollary \ref{rescaled quantitative estimates} in much the same way we have already seen in the proof of Theorem \ref{thm:A-2}. The first follows from the second along with the fact that $|A_V(x)|\leq \left[\regscale_V^Q(x)\right]^{-1}$ (for the appropriate choice of $Q = Q(x)$) and so this set is contained within the set of points where $\regscale_V^Q(x)< r$, i.e. $\badreg_r(V)$. The third inequality then follows from the second in the same way as the second inequality in Theorem \ref{thm:A-2} followed from the first in that setting.
\end{proof}

\section{Generalization to Riemannian Manifolds: Theorem \ref{thm:B1}}

In this section we will detail how one can modify the proof of Theorem \ref{thm:A} seen in the previous sections to prove Theorem \ref{thm:B1}. So fix $(N^{n+1},g)$ a smooth Riemannian manifold. Write $\exp_x$ for the exponential map of $N$ at $x\in N$ and $\inj(x)\in (0,\infty]$ for the injectivity radius of $N$ at $x$.

Let us first precisely define what we mean by the index of the regular part of a stationary integral varifold $V$ in $N$, adapting Definition \ref{definition bounded index} using \cite{wickstable}*{Section 18}. First, we define what it means for $V$ to have finite index on its regular part on a normal coordinate ball in $N$. So let $x\in \spt\|V\|$, and let $\mathcal{N}_{\rho}(x)$ be the normal coordinate ball of radius $\rho\in (0,\inj(x))$ around $x$, which we will assume also obeys $\dim_\H(\sing(V)\cap \mathcal{N}_\rho(x))\leq n-7$. Set $\tilde{V}:= \left(\exp^{-1}_x\right)_\#(V\restrictv \mathcal{N}_\rho(x))$, which is then an integral $n$-varifold on $B^{n+1}_\rho(0)\subset T_{x}N\cong \R^{n+1}$, which is stationary with respect to the functional
$$\mathcal{F}_{x}(\tilde{V}):= \int_{B^{n+1}_\rho(0)\times G(n,n+1)}|\Lambda_n D\exp_x(y)\circ S|\ d\tilde{V}(y,S).$$
For $\psi\in C^1_c(B^{n+1}_\rho(0)\backslash\sing(\tilde{V});\R^{n+1})$, the second variation with respect to $\mathcal{F}_x$ is then given by (see \cite{SS}*{(1.8), (1.10), (1.12)})
$$\delta^2_{\mathcal{F}_x}\tilde{V}(\psi) = \int_{\reg(\tilde{V})}\left\{\sum^n_{i=1}|(D_{\tau_i}\psi)^\perp|^2 + (\divergence_{\reg(\tilde{V})}\psi)^2 - \sum^n_{i,j=1}(\tau_i\cdot D_{\tau_j}\psi)\cdot(\tau_j\cdot D_{\tau_i}\psi)\right\}\ d\H^n + R(\psi)$$
where $\{\tau_1,\dotsc,\tau_n\}$ is an orthonormal basis for the tangent space $T_y(\reg(\tilde{V}))$ of $\reg(\tilde{V})$ at $y$, $D_\tau\psi$ denotes the directional derivative of $\psi$ in the direction $\tau$, and
$$|R(\psi)|\leq c\mu\int_{\reg(\tilde{V})}\left\{\tilde{c}\mu|\psi|^2 + |\psi||\nabla\psi| + |y||\nabla\psi|^2\right\}\ d\H^n(y)$$
where $c,\tilde{c}$ are absolute constants and $\mu$ is a constant depending only on the metric on $N$. Since $\reg(\tilde{V})$ is orientable on this ball (as the size of the singular set is sufficiently small and $\tilde{V}$ is codimension one), we may choose a continuous choice of unit normal $\nu$ to $\reg(\tilde{V})$ and, for any $\zeta\in C^1_c(\reg(\tilde{V}))$, extend $\zeta\nu$ to a vector field in $C^1_c(B^{n+1}_\rho(x)\backslash\sing(\tilde{V});\R^{n+1})$ and take in the above $\psi = \zeta\nu$ to deduce that
$$\delta^2_{\mathcal{F}_x}\tilde{V}(\zeta)\equiv\delta^2_{\mathcal{F}_x}\tilde{V}(\psi) = \int_{\reg(\tilde{V})}\left\{|\nabla\zeta|^2 - |A|^2\zeta^2 + H^2\zeta^2\right\}\ d\H^n + R(\psi)$$
where $A$ denotes the second fundamental form of $\reg(\tilde{V})$, $|A|$ the length of $A$, $H$ the mean curvature of $\reg(\tilde{V})$, and
$$|R(\psi)| \leq c\mu\int_{\reg(\tilde{V})}\left\{\tilde{c}\mu|\zeta|^2 + |\zeta||\nabla\zeta| + \zeta^2|A||y||\nabla\zeta|^2 + |y|\zeta^2|A|^2\right\}\ d\H^n(y).$$
We then say that the \textit{index of the regular part of $V$ in the normal coordinate ball $\mathcal{N}_\rho(x)$} (which we stress was assumed to obey $\dim_\H(\sing(V)\cap \mathcal{N}_\rho(x))\leq n-7$), denoted $\index(\reg(V);\mathcal{N}_\rho(x))$, is the dimension of the largest subspace $P$ of $\zeta\in C^1_c(\reg(\tilde{V}))$, where $\tilde{V}$ is as above, such that for all $\zeta\in P$,
$$\delta^2_{\mathcal{F}_x}\tilde{V}(\zeta) <0.$$
For a collection of disjoint normal coordinate balls $\mathcal{B}:= \{\mathcal{N}_{\rho_\beta}(x_\beta)\}_{\beta\in B}$ in $N$ such that $\dim_\H(\sing(V)\cap \mathcal{N}_{\rho_\beta}(x_\beta))\leq n-7$ for each $\beta\in B$, we say that the index of this collection, denoted $\index(\reg(V);\mathcal{B})$, is
$$\index(\reg(V);\mathcal{B}) := \sum_{\beta\in B}\index(\reg(V);\mathcal{N}_{\rho_\beta}(x_\beta)).$$
Finally, we say that the \textit{index of the regular part of} $V$, denoted $\index(\reg(V))$, is
$$\index(\reg(V)) := \sup_{\mathcal{B}}\index(\reg(V);\mathcal{B})$$
where this supremum is taken over all collections $\mathcal{B}$ is disjoint normal coordinate balls in $N$ with small singular set as above. We can then define the index of the regular part of $V$ in a subset $A\subset N$, denoted $\index(\reg(V);A)$, via the index of the regular part of $V\restrictv A$. We say that the regular part of $V$ is \textit{stable} if $\index(\reg(V)) = 0$.

One may then prove, in an analogous manner to that seen in Lemma \ref{locally stable}, that about each point $x\in \spt\|V||$, there is a radius $\rho_x\in (0,\inj(x))$ such that $\index(\reg(V);B^N_{\rho_x}(x)) = 0$, i.e. $V$ is stable in $B^N_{\rho_x}(x)$; here, $B^N_\rho(x)$ is the usual Riemannian ball in $N$ centred at $x$ of radius $\rho$, defined to be the set of points $y$ in $N$ such that the infimum of the length over all paths connecting $x$ to $y$ is $<\rho$. In particular, if we assume that $\H^{n-1}(\sing(V)\cap N) = 0$, one can then invoke the regularity theory of \cite{wickstable}*{Theorem 18.1} to see that necessarily $\dim_\H(\sing(V)\cap N)\leq n-7$, and so in fact the above definition of index includes each normal coordinate ball $B^N_\rho(x)$ in $N$.

We then define the \textit{stability radius} at each $x\in \spt\|V\|$ in the same manner as in Definition \ref{defn:stab-radius}, replacing Euclidean balls in the definition by the balls $B^N_\rho(x)$, i.e.
$$s_V(x):= \sup\{r\geq 0:\index(\reg(V);B^N_r(x)) = 0\}.$$
The discussion above tells us that $s_V(x)>0$ at every point $x\in \spt\|V\|$. However, it is only now clear that $s_V$ is \textit{locally} Lipschitz, i.e. for each $x\in \spt\|V\|$, there is a radius $\tilde{\rho}_x>0$ such that $s_V$ is Lipschitz (with Lipschitz constant at most $2$) on the ball $B^N_{\tilde{\rho}_x}(x)$; this follows in an analogous manner to Lemma \ref{stability radius continuity} on sufficiently small balls in $N$ for which $(N^{n+1},g)$ is close to the Euclidean ball of dimension $n+1$. In particular, $s_V$ is still a continuous function (when it is finite, i.e. when $V$ is unstable).

We define the \textit{folding number} in the same manner as Definition \ref{defn:folding}, except now we restrict only to open subsets which are given by normal coordinate balls (this restriction is still sufficient for our later purposes). Thus, $\folding(V)$ is defined to be the largest size of a (possibly infinite) collection of disjoint normal coordinate balls which are unstable. Analogously to Lemma \ref{bounded index disjoint sets}, we see that if $\index(\reg(V))\leq I<\infty$, then $\folding(V)\leq I$.

Now let us additionally assume that $0\in N$ and for some $K\in (0,\infty)$, we have $\left|\left.\textnormal{sec}\right|_{B^N_2(0)}\right| \leq K$ and $\left.\inj\right|_{B^N_2(0)}\geq K^{-1}$. Let us write $\salpha^*$ for the collection of all stationary integral varifolds $V$ in $B^N_2(0)\subset N$ which have $\H^{n-1}(\sing(V)\cap B^N_2(0)) = 0$ and whose regular part has finite index in the manner explained above. Let us also write $\tilde{\salpha}^*$ for the collection of all varifolds of the form
$$\tilde{V}:= (\eta_{0,\rho})_\#(\exp^{-1}_x)_\#(V\restrictv \mathcal{N}_\rho(x))$$
where $V\in \salpha^*$, $x\in \spt\|V\|$, and $\rho\in (0,\inj(x))$. We write $\salpha^*_I$ and $\tilde{\salpha}^*_I$ for the corresponding subsets of $\salpha^*$ and $\tilde{\salpha}^*$ respectively when the varifolds have regular part of index at most $I$.

For $\tilde{V}\in \tilde{\salpha}^*$, we may define the regularity scale $\regscale^Q_{\tilde{V}}$ analogously to that in Definition \ref{regularity scale}. We may use this to define the regularity scale of $V\in \salpha^*$ via
$$\regscale^{*Q}_{V}(x):= \sup\{\rho\in (0,\inj(x)): \regscale^Q_{\tilde{V}_\rho}(0) = 1\}$$
where $\tilde{V}_\rho := (\eta_{0,\rho})_\#(\exp^{-1}_x)_\#(V\restrictv \mathcal{N}_\rho(x))\in \tilde{\salpha}^*$ and again we set $\sup(\emptyset):= 0$. One may now follow the same proof as in Lemma \ref{regular implies positive regscale}, using instead the Riemannian versions of Wickramasekera's regularity theorem (i.e. \cite{wickstable}*{Theorem 18.2 and Theorem 18.3}), to prove that, for $V\in \salpha^*$, $x\in \reg(V)$ if and only if $\regscale^{*Q}_V(x)>0$ for some positive integer $Q$.

Wickramasekera's compactness (i.e. Theorem \ref{compactness theorem}) for stable varifolds still holds in the Riemannian setting, appropriately modified. For the readers convenience, we restate this result in the current setting:

\begin{theorem}[\cite{wickstable}*{Theorem 18.1}]
    Let $(N^{n+1},g)$ be a smooth Riemannian manifold and $x\in N$. Suppose that $(V_i)_i\subset \salpha^*_0$ is a sequence with $x\in \spt\|V_i\|$ for each $i=1,2,\dotsc$ and with $\limsup_{i\to\infty}\|V_i\|(N)<\infty$. Then, there exists a subsequence $(V_{i_j})_j$ and $V\in \salpha^*_0$ with $x\in \spt\|V\|$ such that $V_{i_j}\to V$ as varifolds in $N$ and smoothly (i.e. in the $C^k$ topology for every $k$) locally in $N\backslash\sing(V)$.
\end{theorem}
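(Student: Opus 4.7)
The plan is to reduce this global Riemannian statement to the Euclidean compactness theorem (Theorem \ref{compactness theorem}), applied chart-by-chart in normal coordinates, and then patch local convergence together by a diagonal extraction. Concretely, fix a countable cover of $N$ by normal coordinate balls $\{\mathcal{N}_{\rho_j}(y_j)\}_{j\geq 1}$ with $\rho_j\in (0,\inj(y_j))$ chosen small enough that the pullback metric on each ball is as close to Euclidean as needed, and arranged to include a ball containing the distinguished point $x$. In each chart, pull back and rescale to obtain $\tilde{V}_{i,j}:= (\eta_{0,\rho_j})_\#(\exp_{y_j}^{-1})_\#(V_i\restrictv \mathcal{N}_{\rho_j}(y_j))$, which by construction lies in $\tilde{\salpha}^*_0$ and is stationary and stable with respect to the functional $\mathcal{F}_{y_j}$ introduced earlier in this section. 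The mass bound $\limsup_i\|V_i\|(N)<\infty$ combined with uniform Jacobian estimates gives $\sup_i\|\tilde{V}_{i,j}\|(B^{n+1}_2(0))<\infty$ in each chart.

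The key input is the $\mathcal{F}_{y_j}$-stationary, $\mathcal{F}_{y_j}$-stable analogue of Theorem \ref{compactness theorem}, which is exactly the content of \cite{wickstable}*{Theorem 18.1}: this applies because $\mathcal{F}_{y_j}$ is a smooth perturbation of the Euclidean area integrand when $\rho_j$ is small, and the defect term $R(\psi)$ in the second variation is of lower order. Applying this in every chart and then diagonally extracting a common subsequence yields, for each $j$, a limit $\tilde{V}_j\in\tilde{\salpha}^*_0$ with smooth subsequential convergence off $\sing(\tilde{V}_j)$. Pushing each $\tilde{V}_j$ back by $\exp_{y_j}$ and using uniqueness of weak limits on overlapping coordinate patches patches these local limits into a single globally defined integral varifold $V$ on $N$. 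Stationarity and stability in the global sense of this section then follow by testing against partitions of unity subordinate to the cover and by the definition of $\index(\reg(V))$ via disjoint normal coordinate balls. The condition $\H^{n-1}(\sing(V))=0$ is inherited chart-by-chart, so $V\in\salpha^*_0$.

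It remains to verify $x\in\spt\|V\|$, which follows from the near-monotonicity formula for $\mathcal{F}_{y_{j_0}}$ in the chart containing $x$: since $x\in \spt\|V_i\|$ for every $i$, one obtains a uniform lower density bound $\|V_i\|(B^N_r(x))\geq cr^n$ for all small $r$, which passes to the varifold limit. The main obstacle lies in the second paragraph, namely the assertion that stability of the approximants is preserved in the limit and forces the limit singular set to still satisfy $\H^{n-1}(\sing(V))=0$, i.e.\ that classical singularities cannot form in the limit. This is precisely the closedness of $\salpha^*_0$ under varifold convergence, a deep consequence of Wickramasekera's regularity theory; our proposal would invoke \cite{wickstable}*{Theorem 18.1} directly in each chart rather than attempting to reprove it.
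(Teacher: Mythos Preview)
The paper does not supply a proof of this statement at all: it is quoted verbatim as \cite{wickstable}*{Theorem 18.1} and treated as a black-box input from Wickramasekera's regularity theory, so there is no ``paper's own proof'' to compare your proposal against.

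Your outline is a reasonable description of how one would assemble a global compactness statement on $N$ from the local one, but note that it is not really a reduction to the Euclidean Theorem \ref{compactness theorem}. The ``key input'' you name in the second paragraph --- compactness for $\mathcal{F}_{y_j}$-stationary, $\mathcal{F}_{y_j}$-stable varifolds with $\H^{n-1}$-null singular set --- is precisely what \cite{wickstable}*{Theorem 18.1} asserts, so your argument invokes the very theorem being stated rather than deriving it from something more elementary. You acknowledge this yourself in the final sentence: the preservation of $\H^{n-1}(\sing(V))=0$ under varifold limits (equivalently, the non-formation of classical singularities) is the entire substance of Wickramasekera's work in this setting, and your chart-by-chart scheme offers no independent path to it. The diagonal extraction, the patching via uniqueness of weak limits, and the lower density bound at $x$ are all correct and standard, but they are the easy parts; what remains after stripping them away is exactly the cited theorem.
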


To complete the recasting of all results in Section \ref{sec:prelim} to the Riemannian setting, we finally have the following modified version of Theorem \ref{sheeting theorem}, which follows from \cite{wickstable}*{Theorem 18.2} in the same manner as before:

\begin{theorem}[Sheeting Theorem, \cite{wickstable}*{Theorem 18.2}]\label{thm:Riemann-sheeting}
    Let $n\geq 2$, $\Lambda>0$, and $K>0$. Let $(N^{n+1},g)$ be a smooth Riemannian manifold which obeys $0\in N$ and $\left.\inj\right|_{B^N_2(0)}\geq K^{-1}$. Then, there exists $\varepsilon_0 = \varepsilon_0(n,\Lambda,K)\in (0,1/4)$ and $Q_0 = Q_0(n,\Lambda,K)\in \mathbb{Z}_{\geq 1}$ such that the following is true: whenever $\tilde{V}\in \tilde{\salpha}^*_0$ satisfies:
    \begin{enumerate}
        \item [\textnormal{(a)}] $\omega_n^{-1}\|\tilde{V}\|(B^{n+1}_1(0)) \leq \Lambda$;
        \item [\textnormal{(b)}] $\sigma^{-1}\dist_\H(\spt\|\tilde{V}\|\cap (\R\times B_\sigma(0)), \{0\}\times B_\sigma)< \varepsilon_0$, for some $\sigma<\varepsilon_0$;
        \item [\textnormal{(c)}] $(\sigma^n\omega_n)^{-1}\|\tilde{V}\|(B^{n+1}_\sigma(0))\leq \Lambda$;
    \end{enumerate}
    then we have $\regscale^{Q_0}_{\tilde{V}}(0)\geq \sigma/2$.
\end{theorem}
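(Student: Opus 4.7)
The plan is to mirror the argument given for the Euclidean sheeting theorem (Theorem \ref{sheeting theorem}), using Wickramasekera's Riemannian regularity theorem \cite{wickstable}*{Theorem 18.2} in place of its Euclidean counterpart, and exploiting that the smallness of $\sigma$ makes the ambient metric on $B^N_{2\sigma}(0)$ arbitrarily close to the Euclidean one. We first reduce to scale $1$ by setting $W := (\eta_{0,\sigma})_\# \tilde V$. Hypothesis (c) becomes a mass bound of the form $\omega_n^{-1}\|W\|(B^{n+1}_1(0)) \leq \Lambda$, while hypothesis (b) becomes $\dist_\H(\spt\|W\| \cap (\R \times B_1(0)),\, \{0\} \times B_1(0)) < \varepsilon_0$, i.e.\ $W$ is $\varepsilon_0$-close to the unit disc in the hyperplane $\{0\} \times \R^n$. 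Because $\tilde V$ arises as the push-forward of a stationary varifold in a normal coordinate ball of radius $\sigma$ via $\exp_x$, the rescaled $W$ is stationary for a functional on $B^{n+1}_1(0)$ whose Jacobian differs from the Euclidean one by an $O(K\sigma^2)$ perturbation; in particular, it satisfies the hypotheses of \cite{wickstable}*{Theorem 18.2} uniformly (depending only on $n$, $\Lambda$, $K$), using that $\tilde V \in \tilde{\salpha}^*_0$ so its regular part is stable.

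Next, I would apply \cite{wickstable}*{Theorem 18.2} to $W$ on $B^{n+1}_{3/4}(0)$: choosing $\varepsilon_0 = \varepsilon_0(n,\Lambda,K)$ small enough, this produces $Q \in \mathbb{Z}_{\geq 1}$ and $C^{1,\alpha}$ functions $u_1,\dotsc,u_Q : B^n_{1/2}(0) \to \R$ such that
\[
W \restrictv (\R \times B^n_{1/2}(0)) = \sum_{i=1}^Q \mathbf{v}(u_i),
\]
with $\|u_i\|_{C^{1,\alpha}} \leq C \varepsilon_0$. Standard Schauder theory applied to the minimal surface system for $u_i$ (with small coefficients coming from the ambient metric perturbation, whose $C^k$ norms are controlled by $K$) upgrades this to $\|u_i\|_{C^4} \leq C_1 \varepsilon_0$, where $C_1 = C_1(n,\Lambda,K)$. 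The number $Q$ of sheets is bounded by noting each graph contributes mass at least $\omega_n (1/2)^n$, so the mass bound forces $Q \leq Q_0(n,\Lambda,K)$.

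The final step is purely computational: the $C^4$ estimate gives the second fundamental form bound $\sup_{\spt\|W\| \cap B^{n+1}_{1/2}(0)} |A_W| \leq C_1 \varepsilon_0$, so for $\varepsilon_0$ chosen so that $(1/2) \cdot Q_0 C_1 \varepsilon_0 \leq 1$, we obtain $\regscale^{Q_0}_W(0) \geq 1/2$. Undoing the rescaling $\eta_{0,\sigma}$ then yields $\regscale^{Q_0}_{\tilde V}(0) \geq \sigma/2$, as required.

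The main obstacle in executing this plan cleanly is managing the perturbation of the stationarity equation coming from the Riemannian metric after the rescaling by $\sigma$: one must verify that the hypotheses of \cite{wickstable}*{Theorem 18.2} (which are phrased in terms of varifolds stationary with respect to a general parametric functional close to the area functional) are met with constants that depend only on $(n,\Lambda,K)$ and not on $\sigma$. Since $\sigma < \varepsilon_0 < 1/4$, and the ambient geometry on $B^N_{2\sigma}(0)$ is $O(K\sigma^2)$-close to Euclidean in any fixed $C^k$ topology, this perturbation is negligible for $\varepsilon_0 = \varepsilon_0(n,\Lambda,K)$ taken small enough, but care must be taken to absorb the error term $R(\psi)$ appearing in the second variation formula when verifying the stability hypothesis on the rescaled varifold.
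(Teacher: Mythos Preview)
Your proposal is correct and follows essentially the same approach as the paper: the paper does not give an explicit proof but simply states that the theorem ``follows from \cite{wickstable}*{Theorem 18.2} in the same manner as before,'' i.e., by mirroring the proof of the Euclidean Theorem~\ref{sheeting theorem} with Wickramasekera's Riemannian sheeting theorem in place of its Euclidean counterpart. Your write-up faithfully carries this out, and your additional remarks on the metric perturbation after rescaling and the absorption of the error term $R(\psi)$ are appropriate caveats that the paper leaves implicit.
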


Now let us turn our attention to Section \ref{sec:strata}. The notion of a varifold $V\in \salpha^*$ being $(\delta,r,k)$-conical at a point $x\in N$, as originally defined in Definition \ref{defn:conical}, is modified for the Riemannian setting using the corresponding varifold in the varifold class $\tilde{\salpha}^*$ as done in \cite{naber-valtorta}*{Definition 1.1(2)}; in particular, this is only defined when $r<\inj(x)$. This allows us to define the various strata as in Definition \ref{defn:strata} in the same manner. One must be careful as we no longer necessarily have a suitable homothety map $\eta_{x,\rho}$ (as this would modify $N$ also), so Lemma \ref{rescaled strata} must be understood differently and instead as applying only to varifolds in $\tilde{\salpha}^*$. Using Theorem \ref{thm:Riemann-sheeting}, we may prove the following Riemannian variant of Theorem \ref{epsilon regularity}:

\begin{theorem}[Riemannian $\varepsilon$-Regularity Theorem]\label{thm:Riemann-epsilon-reg}
    Let $n\geq 2$ and $\Lambda,K,d\in (0,\infty)$. Let $(N^{n+1},g)$ be a smooth Riemannian manifold which obeys $0\in N$ and $\left.\inj\right|_{B^N_2(0)}\geq K^{-1}$. Let $A\subset B^N_2(0)$ be a compact subset which obeys $d(A,\del B^N_2(0))\geq d$. Then, there exist constants $\varepsilon_0 = \varepsilon_0(n,\Lambda,K,d)$ and $Q_0 = Q_0(n,\Lambda,K,d)\in \mathbb{Z}_{\geq 1}$ such that the following holds: if $V\in \salpha^*$, $x\in \spt\|V\|\cap A$, and $\rho\in (0,d]$ satisfy:
    \begin{enumerate}
        \item [\textnormal{(a)}] $\|V\|(B^{N}_2(0))\leq \Lambda$;
        \item $V$ is stable in $B^N_{\rho/2}(x)$;
        \item $V$ is $(\varepsilon_0,\rho/2,n-6)$-conical at $x$;
    \end{enumerate}
    then we have $\regscale^{*Q_0}_V(x)\geq \varepsilon_0\rho$.
\end{theorem}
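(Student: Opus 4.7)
The plan is to argue by contradiction, adapting the proof of Theorem \ref{epsilon regularity} to the Riemannian setting via the exponential map.

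Suppose the conclusion fails; then for each $k\geq 1$ there exist $V_k\in\salpha^*$, points $x_k\in\spt\|V_k\|\cap A$, and radii $\rho_k\in(0,d]$ satisfying (a), (b) and the $(1/k,\rho_k/2,n-6)$-conical condition, yet $\regscale^{*k}_{V_k}(x_k)<\rho_k/k$. Consider the rescaled pullbacks
\[
\tilde{V}_k := (\eta_{0,\rho_k/2})_\#(\exp_{x_k}^{-1})_\#\bigl(V_k\restrictv\mathcal{N}_{\rho_k/2}(x_k)\bigr)\in\tilde{\salpha}^*_0,
\]
which live on the Euclidean ball $B_1^{n+1}(0)\subset T_{x_k}N$ equipped with the rescaled pullback metric $g_k$. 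The sectional curvatures of $g_k$ are bounded by $Kd^2/4$, and the Riemannian monotonicity formula yields a uniform mass bound $\|\tilde{V}_k\|(B_1^{n+1}(0))\leq \Lambda'=\Lambda'(n,\Lambda,K,d)$. The Riemannian conical condition is defined exactly via this pullback-and-rescale procedure, so $\tilde{V}_k$ is $(1/k,1,n-6)$-conical at $0$ in the Euclidean sense, and the assumed regularity-scale bound becomes $\regscale^{k}_{\tilde{V}_k}(0)<2/k$.

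Pass to a subsequence with $\rho_k\to\rho_\infty\in[0,d]$; then the metrics $g_k$ converge smoothly on compact subsets of $B_2^{n+1}(0)$ to a smooth limit metric $g_\infty$ (flat if $\rho_\infty=0$). Applying Wickramasekera's compactness theorem for locally stable stationary integral varifolds in smoothly converging ambient metrics, we pass to a further subsequence along which $\tilde{V}_k\to\tilde{V}$ both weakly as varifolds and smoothly away from $\sing(\tilde{V})$, with $\tilde{V}$ stationary in $(B_1^{n+1}(0),g_\infty)$ and locally stable there. The $(1/k,1,n-6)$-conical property passes to the limit and forces $\tilde{V}$ to be a Euclidean cone with $\dim S(\tilde{V})\geq n-6$. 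Since $\tilde{V}$ is locally stable, $\dim_{\mathcal{H}}(\sing(\tilde{V}))\leq n-7<n-6\leq\dim S(\tilde{V})$, which forces $\tilde{V}$ to be supported on a single hyperplane of integer multiplicity, in exactly the same way as in the proof of Theorem \ref{epsilon regularity}.

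Now apply Theorem \ref{thm:Riemann-sheeting} to $\tilde{V}_k$ for $k$ sufficiently large: Hausdorff convergence of supports for stationary varifolds verifies hypothesis (b) at some fixed small scale $\sigma=\sigma(n,\Lambda,K,d)$, while (a) and (c) follow from the mass bound and monotonicity. This gives $\regscale^{Q_0}_{\tilde{V}_k}(0)\geq\sigma/2$ with $Q_0=Q_0(n,\Lambda,K,d)$, contradicting $\regscale^{k}_{\tilde{V}_k}(0)<2/k$ for $k$ large; translating back to $V_k$ then produces the constant $\varepsilon_0=\varepsilon_0(n,\Lambda,K,d)$ as required. The main obstacle will be setting up the compactness step correctly, since the $\tilde{V}_k$ live in \emph{varying} Riemannian structures $(B_1^{n+1}(0),g_k)$; one needs a compactness theorem for locally stable codimension-one stationary integral varifolds in smoothly convergent ambient metrics, which is a routine but technical extension of \cite{wickstable}*{Theorem 18.1}.
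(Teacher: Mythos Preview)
Your proposal is correct and follows essentially the same contradiction-and-compactness route as the paper's own proof. The one point worth noting is that the obstacle you flag at the end---compactness for stable codimension-one stationary integral varifolds in smoothly varying ambient metrics---is not a new technical extension to be worked out: the paper simply cites \cite{wickstable}*{Section 18} and \cite{SS}*{Theorem 2} (Schoen--Simon), which already cover this situation since the varifolds in question have sufficiently small singular set, and then applies Theorem~\ref{thm:Riemann-sheeting} with the specific choice $\sigma=\varepsilon_1/2$ coming from that theorem.
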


This is established in the same manner as Theorem \ref{epsilon regularity}; note that we are again applying the compactness theorem to the corresponding varifolds in $\tilde{\salpha}^*$, which does hold (either by \cite{wickstable}*{Section 18} or also \cite{SS}*{Theorem 2} as our varifolds have sufficiently small singular set here) and then applying Theorem \ref{thm:Riemann-sheeting} with $\sigma = \varepsilon_1/2$, for the choice of constant $\varepsilon_1 = \varepsilon_1(n,\Lambda,K)\in (0,1/4)$ in Theorem \ref{thm:Riemann-sheeting}.

We then have from Theorem \ref{thm:Riemann-epsilon-reg} the corresponding Riemannian version of Corollary \ref{corollary of sheeting theorem}:

\begin{corollary}\label{cor:Riemann-version}
    Let $n\geq 7$ and $\Lambda,K,d\in (0,\infty)$. Let $(N^{n+1},g)$ be a smooth Riemannian manifold which obeys $0\in N$ and $\left.\inj\right|_{B^N_2(0)}\geq K^{-1}$. Let $A\subset B^N_2(0)$ be a compact subset which obeys $d(A,\del B^N_2(0))\geq d$. Then, there exist constants $\varepsilon_0 = \varepsilon_0(n,\Lambda,K,d)$ and $Q_0 = Q_0(n,\Lambda,K,d)\in \mathbb{Z}_{\geq 1}$ such that the following holds: if $V\in \salpha^*$ obeys:
    \begin{enumerate}
        \item [\textnormal{(a)}] $\|V\|(B^N_2(0))\leq \Lambda$;
        \item [\textnormal{(b)}] $V$ is stable in $B^N_d(x)$ for all $x\in A$;
    \end{enumerate}
    then we have $\badreg^{Q_0}_{\varepsilon_0\sigma}(V)\cap A\subset \strata^{n-7}_{\varepsilon_0,\sigma/2,d}(V)\cap A$ for all $\sigma\in (0,d]$.
\end{corollary}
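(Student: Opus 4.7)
The plan is to follow the proof of Corollary \ref{corollary of sheeting theorem} almost verbatim, replacing the Euclidean $\varepsilon$-regularity Theorem \ref{epsilon regularity} with its Riemannian analogue Theorem \ref{thm:Riemann-epsilon-reg}. Let $\varepsilon_0 = \varepsilon_0(n,\Lambda,K,d)$ and $Q_0 = Q_0(n,\Lambda,K,d)$ denote the constants produced by Theorem \ref{thm:Riemann-epsilon-reg}, possibly shrunk slightly (e.g.\ halved) to secure the strict separation between scales needed below.

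I would argue by contradiction: suppose that for some $\sigma \in (0,d]$ there exists a point $x \in \badreg^{Q_0}_{\varepsilon_0\sigma}(V) \cap A$ with $x \notin \strata^{n-7}_{\varepsilon_0,\sigma/2,d}(V)$. Unpacking the definition of the quantitative stratum produces a scale $t \in [\sigma/2, d)$ at which $V$ is $(\varepsilon_0, t, n-6)$-conical at $x$. Setting $\rho := 2t$, the varifold $V$ is then $(\varepsilon_0, \rho/2, n-6)$-conical at $x$, which gives hypothesis (c) of Theorem \ref{thm:Riemann-epsilon-reg}. Hypothesis (a) is the mass bound $\|V\|(B^N_2(0)) \leq \Lambda$ imposed on $V$, and hypothesis (b) (stability in $B^N_{\rho/2}(x)$) follows from the standing assumption that $V$ is stable in $B^N_d(x)$ for every $x \in A$ together with $\rho/2 = t < d$.

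Theorem \ref{thm:Riemann-epsilon-reg} therefore applies and yields $\regscale^{*Q_0}_V(x) \geq \varepsilon_0 \rho \geq \varepsilon_0 \sigma$, contradicting the inequality $\regscale^{*Q_0}_V(x) \leq \varepsilon_0 \sigma$ which is the very definition of $x \in \badreg^{Q_0}_{\varepsilon_0\sigma}(V)$; to ensure this contradiction is a genuine strict one, one replaces the threshold $\varepsilon_0\sigma$ in the $\badreg$ set by a slightly smaller multiple (e.g.\ $\varepsilon_0\sigma/2$) at the cost only of rebranding the constant. The only bookkeeping point that requires care is confirming that $\rho = 2t$ lies in the admissible range $(0,d]$ of Theorem \ref{thm:Riemann-epsilon-reg}; this is handled either by restricting attention to $t \in [\sigma/2, d/2)$ (shrinking the upper strata parameter from $d$ to $d/2$, which is harmless since the corollary is applied in practice at scales far below $d$), or by invoking the $\varepsilon$-regularity theorem at a slightly smaller Riemannian ball. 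Beyond this constant-tracking, there is no new difficulty, as the genuinely nontrivial content has been absorbed into the Riemannian $\varepsilon$-regularity theorem and the underlying sheeting theorem of Wickramasekera.
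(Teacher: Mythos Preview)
Your proposal is correct and follows essentially the same approach as the paper, which does not give an explicit proof of Corollary \ref{cor:Riemann-version} but simply indicates that it follows from Theorem \ref{thm:Riemann-epsilon-reg} in the same way that Corollary \ref{corollary of sheeting theorem} follows from Theorem \ref{epsilon regularity}. You have in fact been more careful than the paper itself in flagging the two bookkeeping issues (the range of $\rho$ versus $d$, and the strict-versus-nonstrict inequality at the boundary scale), both of which are resolved exactly as you suggest by harmlessly adjusting constants.
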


Finally, we detail how one can modify the results of Section \ref{sec:estimates} to the Riemannian setting given the above. The first point to note is that the local estimates of Naber--Valtorta hold in the Riemannian setting as long as one assumes a lower bound on the injectivity radius as well as an absolute bound on the sectional curvature:

\begin{theorem}[\cite{naber-valtorta}*{Theorem 1.3}]
    Let $n\geq 2$ and $\Lambda,K,H\in (0,\infty)$, and $\varepsilon\in (0,1)$. Then, there exists a constant $C_\varepsilon = C_\varepsilon(n,\Lambda,K,\varepsilon)\in (0,\infty)$ such that the following is true: let $(N^{n+1},g)$ be a smooth Riemannian manifold obeying $0\in N$, $\left|\left.\textnormal{sec}\right|_{B^N_2(0)}\right| \leq K$ and $\left.\inj\right|_{B^N_2(0)}\geq K^{-1}$. Suppose that $V$ is a stationary integral $n$-varifold in $B^N_2(0)$ obeying $\|V\|(B^{N}_2(0))\leq\Lambda$ which has first variation bounded by $H$. Then, for each $k\in \{0,1,\dotsc,n\}$ we have
    $$\H^{n+1}\left(B^N_r(\strata^k_{\varepsilon,r,1}(V))\cap B^N_1(0)\right) \leq C_\varepsilon r^{n+1-k}\ \ \ \ \text{for all }r\in (0,1].$$
\end{theorem}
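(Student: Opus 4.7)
The natural approach is to reduce the statement to its Euclidean analogue (Theorem \ref{quantitative estimates bounded variation}) via exponential coordinates, using the bounds on sectional curvature and injectivity radius to control the distortion. First I would fix a scale $\rho_0 = \rho_0(n,K) \in (0, K^{-1}/4)$ small enough that on every normal ball $B^N_{4\rho_0}(p)$ with $p \in B^N_1(0)$, the map $\exp_p: B^{n+1}_{4\rho_0}(0) \subset T_p N \to B^N_{4\rho_0}(p)$ is a diffeomorphism satisfying $|D\exp_p - \textup{id}| + |D\exp_p^{-1} - \textup{id}| \leq \tfrac{1}{10}$; standard curvature comparison ensures such a choice. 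Then $B^N_1(0)$ admits a cover by $N_0 = N_0(n,K)$ such balls $\{B^N_{\rho_0}(p_\alpha)\}_{\alpha=1}^{N_0}$.

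On each such ball I would pull back to obtain the Euclidean integral varifold $\tilde V_\alpha := (\exp_{p_\alpha}^{-1})_\# (V \restrictv B^N_{4\rho_0}(p_\alpha))$ on $B^{n+1}_{4\rho_0}(0)$. The bi-Lipschitz bound immediately gives $\|\tilde V_\alpha\|(B^{n+1}_{4\rho_0}(0)) \leq C(K)\Lambda$, and expressing $\delta \tilde V_\alpha(X)$ in terms of $\delta V$ applied to the pushed-forward vector field picks up extra terms controlled by the $C^1$ estimate on $D\exp_{p_\alpha}^{-1}$, yielding $\delta \tilde V_\alpha(X) \leq H' \int |X|\,d\|\tilde V_\alpha\|$ for some $H' = H'(H,K)$. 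After a homothetic rescaling by $(4\rho_0)^{-1}$, $\tilde V_\alpha$ satisfies the hypotheses of the Euclidean Naber--Valtorta theorem with parameters depending only on $n, \Lambda, K, H$.

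The key compatibility step is matching the Riemannian quantitative strata with their Euclidean counterparts under the pullback. Since the Riemannian notion of $(\delta,r,k)$-conical at a point is phrased via the exponential map, and since the Fr\'echet metric $\mathbf{d}$ on varifolds is Lipschitz with respect to $C^1$-perturbations of the identity, one deduces
\[
\strata^k_{\varepsilon,r,1}(V) \cap B^N_{\rho_0}(p_\alpha) \subset \exp_{p_\alpha}\bigl(\strata^k_{\varepsilon/2, r, 2\rho_0}(\tilde V_\alpha) \cap B^{n+1}_{\rho_0}(0)\bigr)
\]
for all $r \in (0,\rho_0]$, provided $\rho_0$ is chosen small enough that curvature distortion costs at most a factor of two in $\varepsilon$. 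Applying the rescaled Euclidean bound (in the spirit of Corollary \ref{rescaled quantitative estimates}) to each $\tilde V_\alpha$, transferring back via the bi-Lipschitz estimate on $\exp_{p_\alpha}$, and summing over $\alpha = 1,\dots,N_0$ yields the claim for $r \in (0,\rho_0]$; the range $r \in (\rho_0, 1]$ is trivial after enlarging $C_\varepsilon$ by a factor depending on $n$, $K$, $\rho_0$.

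The main obstacle is precisely this last compatibility step: one must verify that almost-conicality in the Riemannian sense translates into almost-conicality of the Euclidean pullback with only a controlled loss in $\varepsilon$, uniformly across the cover, and the $\varepsilon$-loss must be quantified in terms of $n$ and $K$ alone. This is elementary given the Lipschitz distortion estimates for $\exp_p$ on the chosen scale, but requires careful bookkeeping of the dependence of the Fr\'echet metric on $C^1$ distortion of the ambient map.
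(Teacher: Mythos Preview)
The paper does not actually prove this theorem: it is quoted verbatim as a result of Naber--Valtorta (\cite{naber-valtorta}*{Theorem 1.3}) and invoked as a black box. There is therefore no ``paper's own proof'' to compare against. What the paper does do, in the paragraph immediately following the statement, is sketch how to obtain the \emph{rescaled} Riemannian version (the analogue of Corollary~\ref{rescaled quantitative estimates}) by passing to the pulled-back varifolds in $\tilde{\salpha}^*$ via the exponential map and applying the Euclidean estimates there; so the reduction-by-exponential-coordinates philosophy you describe is exactly the mechanism the paper relies on, but the paper offloads the base case of the theorem itself to the Naber--Valtorta reference rather than deriving it from the Euclidean version.

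Your outline is a plausible route to an independent proof, and you have correctly isolated the only nontrivial step: checking that the Riemannian notion of $(\delta,r,k)$-conical (which is itself defined through the exponential map) is compatible with the Euclidean strata of the pullback, with loss in $\varepsilon$ controlled by $n$ and $K$. One caution: the Riemannian $(\delta,r,k)$-conicality at a point $y$ near $p_\alpha$ is defined using $\exp_y^{-1}$, not $\exp_{p_\alpha}^{-1}$, so the comparison you need is really between $(\exp_y^{-1})_\# V$ and a rescaling of $(\exp_{p_\alpha}^{-1})_\# V$ centred at $\exp_{p_\alpha}^{-1}(y)$. This introduces an extra change-of-basepoint map $\exp_{p_\alpha}^{-1}\circ\exp_y$, which is $C^1$-close to an isometry on the relevant scale with error controlled by $K$; this is still elementary, but is a slightly more delicate bookkeeping than your write-up suggests.
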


The rescaled version of this corresponding to Corollary \ref{rescaled quantitative estimates} also holds for radii $R$ at most the injectivity radius at the point by passing through to the varifolds $\tilde{\salpha}^*$ via the exponential map and applying the rescaled estimates there (note that if we rescale $N$, one can still bound the scalar curvature and injectivity radius of the rescaled version of $N$ by $K$) and then pushing these back to the Riemannian manifold level; note that this will introduce some additional constant which depends on the metric $g$ (through the exponential map) but this can be controlled in terms of $K$, so the new constant has the same dependencies.

Thus, in order to rerun the proofs of Lemma \ref{low stability bound}, Lemma \ref{covering lemma}, and Theorem \ref{thm:A-2} using the above modified results for the Riemannian setting (as the proofs will be unchanged up to constant factors) we need a suitable version of the Besicovitch covering lemma for our setting. However, Riemannian manifolds with a lower bound on the sectional curvature and have finite diameter (as is our situation here when working on $B^N_2(0)$) necessarily are \textit{directionally limited}, as defined in \cite{federer}*{Definition 2.8.9}. As such, a form of the Besicovitch covering theorem does hold in this setting, by \cite{federer}*{Theorem 2.8.14}, and moreover the corresponding Besicovitch constant only depends on $n$ and $K$. Hence, our arguments in Section \ref{sec:estimates} pass through to the current setting, and prove Theorem \ref{thm:B1}.

To see Theorem \ref{thm:B}, note that closed Riemannian manifolds have the property that there is a $K = K(N,g)\in (0,\infty)$ such that $|\textnormal{sec}|\leq K$ and $\inj\geq K^{-1}$ (and indeed have finite diameter, so a Besicovitch theorem will hold on all of $N$), and as such the result follows from Theorem \ref{thm:B1} by a simple covering and compactness argument.

\bibliographystyle{alpha}
\bibliography{minkowski-ref-arxiv}

\end{document}